\def\R{{\mathbb R}}
\newtheorem{lemma}{Lemma}[section]
\newtheorem{theorem}[lemma]{Theorem}
\newtheorem{remark}[lemma]{Remark}
\newtheorem{prop}[lemma]{Proposition}
\newtheorem{coro}[lemma]{Corollary}
\newtheorem{definition}[lemma]{Definition}
\newtheorem{example}[lemma]{Example}
\numberwithin{equation}{section}
\begin{document}

\title[Poisson stable solutions for SDEs with L\'evy noise]
{Poisson stable solutions for stochastic differential equations with L\'evy noise}

%% First author
\author{Xin Liu}
\address{X. Liu : School of Mathematical Sciences,
Dalian University of Technology, Dalian 116024, P. R. China}
\email{liuxin17@mail.dlut.edu.cn}

%% Second author
\author{Zhenxin Liu}
\address{Z. Liu (Corresponding author): School of Mathematical Sciences,
Dalian University of Technology, Dalian 116024, P. R. China}
\email{zxliu@dlut.edu.cn}

%\thanks{This work is partially supported by NSFC Grants 11271151, 11522104, and the startup and
%Xinghai Youqing funds from Dalian University of Technology.}

%%%%%%%%%%%%%%%%%%%%%%%%%%%%%%%%%%%
%\date{\today}
\date{September 30, 2019}
\subjclass[2010]{60H10, 60G51, 37B20, 34C25, 34C27.} %Secondary: 35B35, 35B40.
\keywords{Stochastic differential equation; L\'evy noise; Periodic solution; Quasi-periodic solution;
Almost periodic solution; Levitan almost periodic solution; Almost automorphic solution;
Birkhoff recurrent solution; Poisson stable solution; Asymptotic stability.}

\begin{abstract}

In this paper, we use a unified framework to study Poisson stable (including
stationary, periodic, quasi-periodic, almost periodic, almost automorphic, Birkhoff recurrent,
almost recurrent in the sense of Bebutov, Levitan almost periodic, pseudo-periodic, pseudo-recurrent
and Poisson stable) solutions for semilinear stochastic differential equations driven by infinite
dimensional L\'evy noise with large jumps. Under suitable conditions on drift, diffusion and jump
coefficients, we prove that there exist solutions which inherit the Poisson stability of coefficients.
Further we show that these solutions are globally asymptotically stable in square-mean sense. Finally,
we illustrate our theoretical results by several examples.

\end{abstract}
\maketitle

\section{Introduction}
The notion of Poisson stability was first introduced by Poincar\'e in his famous work \cite{Poincare}
in the late 19th century. He pointed out
that in all bounded Hamilton systems the orbits of aperiodic solutions are stable in the sense of Poisson.
Poisson stable motions in dynamical systems generally include stationary, periodic, quasi-periodic,
almost periodic \cite{Bohr_1925,Bohr_1925_2,Bohr_1926,Boh_1933}, almost automorphic
\cite{Boh_1955,B62,Veech_1963,SY}, Birkhoff recurrent \cite{Bir},
Levitan almost periodic \cite{Lev,Lev_1953}, almost recurrent \cite{Bebu,Sch65},
pseudo-periodic \cite{Bohr_1947}, pseudo-recurrent \cite{Sch62,Sch72} etc.
Note that in the literature on topological dynamics, Poisson stable motions are sometimes
called recurrent motions, especially in discrete situations. It is well-known that recurrence
is one of central topics of both dynamical systems and probability theory, which describes
the asymptotic behavior and complexity of dynamical systems and
Markov processes. By Poincar\'e recurrence theorem and Birkhoff recurrence theorem,
recurrence exists extensively in dynamical systems. On the other hand, it is known that (positive) recurrence
is essentially equivalent to the existence of invariant (probability) measures for Markov processes.

As mentioned above, recurrence has been extensively studied for deterministic dynamical systems.
Now let us recall some studies on recurrent solutions for stochastic differential equations (SDEs) which are
closely relevant to our present work. There are many works on recurrent solutions to the continuous SDE based on Gaussian noise,
such as Khasminskii \cite{Khasminskii}, Zhao and Zheng \cite{ZZ} for periodicity (see also Chen et al \cite{CHLY} and Ji et al \cite{JQSY}
for the study of periodicity in the framework of Fokker-Planck equations), Halanay \cite{Halanay},
Morozan and Tudor \cite{Morozan and Tudor},
Da Prato and Tudor \cite{DT}, Arnold and Tudor \cite{AT}, Liu and Wang \cite{LW}, Li et al \cite{LLW} for almost periodicity,
Fu and Liu \cite{Fu and Liu}, Chen and Lin \cite{CLin} for almost automorphy, and Cheban and Liu \cite {CL}, Cheng and Liu \cite{CmL}
for general recurrence, among others.

At the same time, similar researches for SDEs based on semimartingales with jumps have been underway,
but it turns out that results are not simply parallel to those continuous SDEs. When referring to discontinuous
and fluctuation cases, we usually consider L\'evy processes with jumps since the analysis to L\'evy processes
is much simpler and they can also provide valuable information. L\'evy processes that a class of
c\`adl\`ag processes possess stationary and independent increments, including Brownian motions and
Poisson processes as special cases, form an important subclass of both semimartingales and Markov processes.
It is because L\'evy processes keep independent increment property of Brownian motions but not strictly
request continuity of orbits, permitting break points exist in time-continuous financial, biological models etc,
L\'evy models with jumps can better depict the phenomena in these fields.
In fact, it is fair to say that SDEs with jumps driven by L\'evy processes are
quite useful and they have been applied in mathematical finance, biology and other areas.
Here we just mention some works directly related to ours, without any claim of completeness.
The monographs Sato \cite{Ken-iti}, Applebaum \cite{A} and Peszat and Zabczyk \cite{PZ} respectively
introduced the theory of L\'evy processes, the theory of SDEs driven
by finite dimensional L\'evy processes and the infinite dimensional case systematically.
Wang and Liu \cite{WL} and Liu and Sun \cite{LS} respectively proved the existence of almost
periodic and almost automorphic solutions to SDEs perturbed by L\'evy noise.

Motivated by the work of Cheban and Liu \cite{CL}
on Poisson stable solutions of SDEs with Gaussian noise, we intend to apply comparable (strongly comparable)
methods of B. A. Shcherbakov to investigate Poisson stable solutions of SDEs based on L\'evy noise
with large jumps in the form:
\begin{align*}
{\rm d }Y(t)=&(AY(t)+f(t,Y(t))){\rm d }t+g(t,Y(t)){\rm d }W(t) \\
&+\int_{|x|_U<1}F(t,Y(t-),x)\widetilde{N}({\rm d }t,{\rm d }x)+\int_{|x|_U\geq1}G(t,Y(t-),x)N({\rm d }t,{\rm d }x),
\end{align*}
where the operator $A$ generates an exponentially stable semigroup and
$f,g,F,G$ are Poisson stable functions in $t$.
We wonder whether or not there exist Poisson stable solutions for the above SDE,
since intuitively large jumps may destroy the Poisson stability of solutions. For this
interesting question, we elaborate in this paper that under some conditions on coefficients $f,g,F,G$,
there always exists a unique bounded solution which have the same character of recurrence as
coefficients in distribution sense and the solution is globally asymptotically stable. Besides
we emphasize the fact that the Poisson stability of solutions is determined by the weakest
Poisson stability of coefficients $f,g,F,G$. It coincides with our cognition and we show
this point by several examples.

Our paper is organized as follows. Section 2 begins with definitions of Poisson stable functions,
L\'evy processes and their basic properties. We simply
review L\'evy-It\^o decomposition and B. A. Shcherbakov's comparable (strongly comparable)
methods by character of recurrence. In Section 3, we prove (Theorem \ref{thmone}) that under
some suitable conditions, there exist bounded solutions, taking on the Poisson stability of
coefficients in distribution sense, for linear SDEs with L\'evy noise. In Section 4, we prove
the semilinear SDE case and it is the main result of this paper (Theorem \ref{thmtwo}).
In Section 5, we discuss the asymptotic stability of Poisson stable solutions.
In Section 6, we illustrate the application of our theoretical results by some examples.

\section{Preliminaries}
\subsection{The space $C(\mathbb R, \mathcal Y)$}
Let $(\mathcal Y,\rho)$ be a complete metric space. Denote by $C(\mathbb R,\mathcal Y)$
the space of all continuous functions $\varphi :\mathbb R
\to \mathcal Y$ equipped with the metric
\begin{equation*}\label{eqD1}
d(\varphi,\psi):=\sup\limits_{k>0}\min\{\max\limits_{|t|\le
k}\rho(\varphi(t),\psi(t)),k^{-1}\}.
\end{equation*}
Note that $(C(\mathbb R,\mathcal Y),d)$ is a complete metric space. It is known that for any
$\varphi,\psi\in C(\mathbb R, \mathcal Y)$, $d(\varphi,\psi) = \varepsilon$ if and only if
\begin{equation*}
\max\limits_{|t|\le \varepsilon^{-1}}\rho(\varphi(t),\psi(t))=\varepsilon
\end{equation*}
(see, e.g. \cite[ChI]{Sch72}, \cite{Sch85,sib}).

\begin{remark}\label{remD1} \rm
1. The metric $d$ generates the compact-open topology on $C(\mathbb R,\mathcal Y)$.

2. The following statements are equivalent.
\begin{enumerate}
\item $d(\varphi_n,\varphi)\to 0$ as $ n\to \infty$.
\item For each $k>0$, $\lim\limits_{n\to \infty}\max\limits_{|t|\le
k}\rho(\varphi_n(t),\varphi(t))=0$.
\item There exists a sequence $l_n\to +\infty$ such that $\lim\limits_{n\to \infty}\max\limits_{|t|\le
l_n}\rho(\varphi_n(t),\varphi(t))=0$.
\end{enumerate}
\end{remark}

\subsection{Poisson stable functions}
Let us recollect the types of Poisson stable functions to be studied in this paper and the relations among them.
We recommend the reader to \cite{Sel_71,Sch72,Sch85,sib} for further details.
\begin{definition}\label{hull}\rm
For given $\varphi\in C(\mathbb R,\mathcal Y )$, $\varphi^h$ denotes the
{\em $h$-translation of $\varphi$}, i.e. $\varphi^h(t):=\varphi(h+t)$ for $t\in\mathbb R$. $H(\varphi)$
denotes the hull of $\varphi$, which is the set of all the limits of $\varphi^{h_n}$ in $C(\mathbb R,\mathcal Y)$, i.e.
$H(\varphi):=\{\psi\in C(\mathbb R, \mathcal Y):\psi=\lim\limits_{n\to\infty}
\varphi^{h_n} \hbox{ for some sequence } \{h_n\} \subset \mathbb R\}$.
\end{definition}

\begin{remark} \rm
It is well-known that the mapping $\pi: \mathbb R\times C(\mathbb R,\mathcal Y)\to C(\mathbb R, \mathcal Y)$
defined by $\pi(h,\varphi) =\varphi^h$ is a dynamical system, i.e. $\pi(0,\varphi)=\varphi$,
$\pi(h_1+h_2,\varphi)=\pi(h_2,\pi(h_1,\varphi))$ and
the mapping $\pi$ is continuous (see, e.g. \cite{Ch2015,Sel_71}). In particular, the mapping $\pi$
restricted to $\mathbb R\times H(\varphi)$ is a dynamical system.
\end{remark}

\begin{definition} \rm
A number $\tau\in\mathbb R$ is said to be {\em $\varepsilon$-shift} for $\varphi \in C(\mathbb R,\mathcal Y)$ if $d(\varphi^{\tau},\varphi)<\varepsilon$.
\end{definition}

\begin{definition} \rm
A function $\varphi \in C(\mathbb R,\mathcal Y)$ is called {\em Poisson stable in the positive (respectively, negative) direction}
if for every $\varepsilon >0$ and $l>0$ there exists $\tau >l$ (respectively, $\tau<-l$) such that $d(\varphi^{\tau},\varphi)<\varepsilon$.
The function $\varphi$ is called {\em Poisson stable} provided it is Poisson stable in both directions.
\end{definition}

\begin{definition} \rm
A function $\varphi\in C(\mathbb R,\mathcal Y)$ is called {\em stationary} (respectively, {\em $\tau$-periodic})
if $\varphi(t)=\varphi(0)$ (respectively, $\varphi(t+\tau)=\varphi(t)$) for all $t\in \mathbb R$.
\end{definition}
\begin{definition} \rm
A function $\varphi \in C(\mathbb R,\mathcal Y)$ is called {\em quasi-periodic with the spectrum of frequencies $\nu_1,\nu_2,\ldots,\nu_m$}
if the following conditions are fulfilled.
\begin{enumerate}
\item The numbers $\nu_1,\nu_2,\ldots,\nu_m$ are rationally independent.
\item There exists a continuous function $\Phi :\mathbb R^{m}\to \mathcal Y$ such that
$\Phi(t_1+2\pi,t_2+2\pi,\ldots,t_m+2\pi)=\Phi(t_1,t_2,\ldots,t_m)$ for all $(t_1,t_2,\ldots,t_m)\in \mathbb R^{m}$.
\item $\varphi(t)=\Phi(\nu_1 t,\nu_2 t,\ldots,\nu_m t)$ for all $t\in \mathbb R$.
\end{enumerate}
\end{definition}

\begin{definition} \rm
Let $\varepsilon >0$. A number $\tau \in \mathbb R$ is called {\em $\varepsilon$-almost period} of the function
$\varphi$ if $\rho(\varphi(t+\tau),\varphi(t))<\varepsilon$ for all $t\in\mathbb R$.
Denote by $\mathcal T(\varphi,\varepsilon)$
the set of $\varepsilon$-almost periods of $\varphi$.
\end{definition}

\begin{definition} \rm
A function $\varphi \in C(\mathbb R,\mathcal Y)$ is said to be {\em almost periodic} if the set of
$\varepsilon$-almost periods of $\varphi$ is {\em relatively dense} for each $\varepsilon >0$,
i.e. for each $\varepsilon >0$ there exists $l=l(\varepsilon)>0$ such that
$\mathcal T(\varphi,\varepsilon)\cap [a,a+l]\not=\emptyset$ for all $a\in\mathbb R$.
\end{definition}

\begin{definition} \rm
A function $\varphi \in C(\mathbb R,\mathcal Y)$ is said to be {\em pseudo-periodic in the positive}
(respectively, {\em negative}) {\em direction} if for each $\varepsilon >0$ and
$l>0$ there exists a $\varepsilon$-almost period $\tau >l$
(respectively, $\tau <-l$) of the function $\varphi$. The function
$\varphi$ is called pseudo-periodic if it is pseudo-periodic in both
directions.
\end{definition}

\begin{definition}\label{defPR}\rm %(\cite{Sch68,Sch72,Sch85})
A function $\varphi \in C(\mathbb
R,\mathcal Y)$ is called  {\em pseudo-recurrent} if for any
$\varepsilon >0$ and $l\in\mathbb R$ there exists a constant $k\ge l$ such that for any $\tau_0\in \mathbb R$
we can find a number $\tau \in [l,k]$
satisfying
$$
\sup\limits_{|t|\le {\varepsilon}^{-1}}\rho(\varphi(t+\tau_0
+\tau),\varphi(t+\tau_0))\le \varepsilon.
$$
\end{definition}

\begin{remark}\rm
The inclusion relations among the above recurrent functions are stationary $\Rightarrow$ periodic $\Rightarrow$
quasi-periodic $\Rightarrow$ almost periodic $\Rightarrow$ pseudo-periodic $\Rightarrow$ pseudo-recurrent $\Rightarrow$
Poisson stable, where $\Rightarrow$ means implying.
\end{remark}

\begin{definition}\rm
A function $\varphi \in C(\mathbb R,\mathcal Y)$ is called {\em almost automorphic} if and only if for any sequence $\{t'_{n}\} \subset
\mathbb R$ there are a subsequence $\{t_n\}$ and some function
$\psi: \mathbb R\to \mathcal Y$ such that
\begin{equation*}
\varphi(t+t_n)\to \psi(t)\ \ \mbox{and}\ \ \psi(t-t_n)\to \varphi(t)
\end{equation*}
uniformly in $t$ on every compact subset from $\mathbb R$.
\end{definition}
In what follows, we denote $(\mathcal X,\gamma)$ as a complete metric space.
\begin{definition} \rm
A function $\varphi\in C(\mathbb R,\mathcal Y)$ is called {\em Levitan almost periodic} if there exists
an almost periodic function $\psi \in C(\mathbb R,\mathcal X)$ such that for any $\varepsilon >0$ there exists $\delta =\delta (\varepsilon)>0$
such that $d(\varphi^{\tau},\varphi)<\varepsilon$ for all $\tau \in \mathcal T(\psi,\delta)$.
\end{definition}

\begin{definition} \rm
A function $\varphi \in C(\mathbb R,\mathcal Y)$ is called {\em almost recurrent (in the sense of Bebutov)} if for every
$\varepsilon >0$ the set $\{\tau : d(\varphi^{\tau},\varphi)<\varepsilon\}$ is relatively dense.
\end{definition}

\begin{definition} \rm
A function $\varphi\in C(\mathbb R,\mathcal Y)$ is called {\em Lagrange stable} if $\{\varphi^{h}: h\in \mathbb R\}$
is a relatively compact subset of $C(\mathbb R,\mathcal Y)$.
\end{definition}

\begin{definition} \rm
A function $\varphi \in C(\mathbb R,\mathcal Y)$ is called {\em Birkhoff recurrent} if it is almost recurrent and Lagrange stable.
\end{definition}

\begin{remark}\label{remPR} \rm (\cite{Sch68,Sch72,Sch85,sib})
\begin{enumerate}
\item
The inclusion relations among the above recurrence notions are almost periodicity $\Rightarrow$ almost automorphy $\Rightarrow$
Levitan almost periodicity $\Rightarrow$ almost recurrence $\Rightarrow$ Poisson stability.
\item
Every almost automorphic function is Birkhoff recurrent and every Birkhoff recurrent function is pseudo-recurrent, but the inverse
statement is not true in general.

\end{enumerate}
\end{remark}
Finally, we remark that a Lagrange stable function is not necessary Poisson stable, but all other types of functions
introduced above are Poisson stable.

\subsection{Shcherbakov's comparability method by character of recurrence}

Let $\varphi \in C(\mathbb R,\mathcal Y)$. Denote by $\mathfrak N_{\varphi}$ (respectively,
$\mathfrak M_{\varphi}$) the family of all sequences
$\{t_n\}\subset \mathbb R$ such that $\varphi^{t_n} \to \varphi$ (respectively,
$\{\varphi^{t_n}\}$ converges) in $C(\mathbb R,\mathcal Y)$ as $n\to \infty$. We write $\mathfrak N_{\varphi}^{u}$
(respectively, $\mathfrak M_{\varphi}^{u}$) to mean the family of sequences $\{t_n\}\in \mathfrak N_{\varphi}$
(respectively, $\{t_n\}\in \mathfrak M_{\varphi}$)
such that $\varphi^{t_n}$ converges to $\varphi$
(respectively,  $\{\varphi^{t_n}\}$ converges) uniformly in $t\in\mathbb R$ as $n\to \infty$.

\begin{definition} \rm
A function $\varphi \in C(\mathbb R,\mathcal Y)$ is said to be {\em comparable (by character of recurrence)} with $\psi\in C(\mathbb R,\mathcal X)$
if $\mathfrak N_{\psi}\subseteq \mathfrak N_{\varphi}$;
$\varphi$ is said to be {\em strongly comparable (by character of recurrence)} with $\psi$ if $\mathfrak M_{\psi}\subseteq \mathfrak M_{\varphi}$.
\end{definition}

\begin{theorem}\label{th1}{\rm(\cite[ChII]{Sch72}, \cite{scher75}, \cite{CL})}
Let $\varphi\in C(\mathbb R,\mathcal Y)$, $\psi\in C(\mathbb R,\mathcal X)$. Then the following statements hold.
\begin{enumerate}
\item $\mathfrak M_{\psi}\subseteq \mathfrak M_{\varphi}$ implies $\mathfrak N_{\psi}\subseteq \mathfrak N_{\varphi}$, and hence
      strong comparability implies comparability.
\item $\mathfrak M_{\psi}^{u} \subseteq \mathfrak M_{\varphi}^{u}$ implies $\mathfrak N_{\psi}^{u}\subseteq \mathfrak N_{\varphi}^{u}$.

\item  Let $\varphi \in C(\mathbb R,\mathcal Y)$ be comparable with $\psi\in C(\mathbb R,\mathcal X)$.
 If the function $\psi$ is stationary (respectively, $\tau$-periodic, Levitan almost periodic, almost recurrent, Poisson stable), then so is $\varphi$.

\item Let $\varphi \in C(\mathbb R,\mathcal Y)$ be strongly comparable with $\psi\in C(\mathbb R,\mathcal X)$.
If the function $\psi$ is quasi-periodic with the spectrum of frequencies
$\nu_1$, $\nu_2$, $\dots$, $\nu_m$ (respectively, almost periodic, almost automorphic, Birkhoff recurrent, Lagrange stable), then so is $\varphi$.

\item Let $\varphi \in C(\mathbb R,\mathcal Y)$ be strongly comparable with $\psi\in C(\mathbb R,\mathcal X)$
and $\psi$ be Lagrange stable. If $\psi$ is pseudo-periodic (respectively, pseudo-recurrent), then so is $\varphi$.
\end{enumerate}
\end{theorem}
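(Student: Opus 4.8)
The plan is to treat parts (i)--(ii) as the structural core and then read off (iii)--(v) by matching each recurrence class to the families $\mathfrak N$ and $\mathfrak M$ (and their uniform or relatively-dense refinements). First I would record the trivial inclusions $\mathfrak N_\varphi\subseteq\mathfrak M_\varphi$ and $\mathfrak N^{u}_\varphi\subseteq\mathfrak M^{u}_\varphi$, and the continuity of each fixed shift $\pi(h,\cdot)$. The simplest transfers in (iii) already display the mechanism: if $\psi$ is stationary then $\mathfrak N_\psi$ is the set of all sequences, so $\mathfrak N_\psi\subseteq\mathfrak N_\varphi$ forces $\varphi^{t}=\varphi$ for every $t$ (take $t_n\equiv t$); if $\psi$ is $\tau$-periodic then the constant sequence $t_n\equiv\tau$ lies in $\mathfrak N_\psi\subseteq\mathfrak N_\varphi$, giving $\varphi^{\tau}=\varphi$; and if $\psi$ is Poisson stable then $\mathfrak N_\psi$ contains sequences tending to $+\infty$ and to $-\infty$, which are inherited by $\varphi$. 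These cases fix the template: encode the recurrence of $\psi$ as membership of suitable sequences in $\mathfrak N_\psi$ or $\mathfrak M_\psi$, then push them into $\mathfrak N_\varphi$ or $\mathfrak M_\varphi$.

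For (i), let $\{t_n\}\in\mathfrak N_\psi$, so $\psi^{t_n}\to\psi$ and in particular $\{t_n\}\in\mathfrak M_\psi\subseteq\mathfrak M_\varphi$; hence $\varphi^{t_n}$ converges to some $\bar\varphi\in C(\mathbb R,\mathcal Y)$, and everything reduces to identifying $\bar\varphi=\varphi$. To obtain this I would exploit that the inclusion $\mathfrak M_\psi\subseteq\mathfrak M_\varphi$ is available for \emph{every} $\psi$-convergent sequence, not only for $\{t_n\}$: for each fixed $m$ the sequence $\{t_n-t_m\}_n$ is $\psi$-convergent to $\psi^{-t_m}$ by shift-continuity, hence $\varphi$-convergent to $\bar\varphi^{-t_m}$, and I would then run a diagonal passage over $m$ against the relation $\psi^{t_n}\to\psi$ to manufacture a sequence driving $\bar\varphi$ back onto $\varphi$. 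Part (ii) runs along identical lines with ``converges'' upgraded to ``converges uniformly on $\mathbb R$'', the only extra point being that uniformity survives a fixed shift and the diagonal passage. I expect this limit-identification in (i) to be the main obstacle of the whole theorem; the reduction is immediate, but making the identification precise without any compactness of $\{\psi^{h}\}$ is delicate, and this is exactly the step I would lean on the cited sources \cite{Sch72,scher75,CL} to execute carefully.

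For the remaining transfers I would invoke the known sequential descriptions of each class and match them to the correct family. The comparable list in (iii) consists precisely of properties describable through $\mathfrak N_\psi$ alone, handled by a contrapositive $\varepsilon$--$\delta$ argument. For Levitan almost periodicity, with governing almost periodic $\eta$, if $\varphi$ failed the property there would be $\tau_k\in\mathcal T(\eta,1/k)$ with $d(\varphi^{\tau_k},\varphi)\ge\varepsilon_0$; but $\tau_k\in\mathcal T(\eta,1/k)$ forces $\psi^{\tau_k}\to\psi$, so $\{\tau_k\}\in\mathfrak N_\psi\subseteq\mathfrak N_\varphi$ and $\varphi^{\tau_k}\to\varphi$, a contradiction. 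For almost recurrence the same template works with the relatively dense return set of $\psi$ supplying, inside each arbitrarily long interval that is free of $\varepsilon_0$-shifts of $\varphi$, a point that is a $1/k$-shift of $\psi$; these points form a sequence in $\mathfrak N_\psi\subseteq\mathfrak N_\varphi$ along which $\varphi$ must return, contradicting the freeness. Poisson stability is the weakest and is already covered above.

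For the strongly comparable list in (iv) I would use the full family $\mathfrak M$ and its uniform version. Lagrange stability transfers cleanly: $\{\psi^{h}\}$ relatively compact means every sequence has a subsequence in $\mathfrak M_\psi\subseteq\mathfrak M_\varphi$, so every sequence of $\varphi$-shifts has a convergent subsequence, i.e. $\{\varphi^{h}\}$ is relatively compact. For almost periodicity, almost automorphy and quasi-periodicity I would use the Bochner-type descriptions via uniform convergence of shift subsequences and the frequency module read off from which sequences lie in $\mathfrak M_\psi$; here I would feed in part (ii), using that for the Lagrange stable $\psi$ the relevant convergences are automatically uniform so that $\mathfrak M_\psi\subseteq\mathfrak M_\varphi$ supplies the uniform comparability these classes require, and Birkhoff recurrence then follows as almost recurrence plus the already-transferred Lagrange stability. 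Finally (v) is where the extra hypothesis that $\psi$ is Lagrange stable is essential: the pseudo-periodic and pseudo-recurrent classes are defined through relatively dense \emph{uniform} approximate returns and are not purely $\mathfrak N$-describable, so Lagrange stability of $\psi$ is what converts ``$\psi$ returns along a suitable sequence'' into ``a shift subsequence of $\psi$ converges'', bringing $\mathfrak M_\psi\subseteq\mathfrak M_\varphi$ and the transferred Lagrange stability of $\varphi$ into play to close the argument. Throughout (iii)--(v) the recurring technicality is bookkeeping between the metric $d$ on $C(\mathbb R,\mathcal Y)$ and pointwise or uniform estimates, which is routine once (i)--(ii) are secured.
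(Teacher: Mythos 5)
First, a point of comparison: the paper does not prove Theorem \ref{th1} at all --- it is quoted from Shcherbakov \cite[ChII]{Sch72}, \cite{scher75} and Cheban--Liu \cite{CL} --- so your attempt can only be judged on its own terms. Its architecture (reduce to (i)--(ii), then encode each recurrence class sequentially) is the right one, but the central step is left open. In (i) you correctly reduce to identifying the limit $\bar\varphi$ of $\varphi^{t_n}$ with $\varphi$, but the proposed ``diagonal passage over $m$'' through the sequences $\{t_n-t_m\}_n$ is never carried out, and you explicitly defer it to the cited sources. The identification does not require a diagonal argument: merge $\{t_n\}$ with the zero sequence, setting $s_{2n-1}=0$ and $s_{2n}=t_n$. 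Since $\psi^{t_n}\to\psi$ and $\psi^{0}=\psi$, the full sequence $\psi^{s_n}$ converges, so $\{s_n\}\in\mathfrak M_{\psi}\subseteq\mathfrak M_{\varphi}$ and $\varphi^{s_n}$ converges; its odd subsequence is constantly $\varphi$, so the limit is $\varphi$ and hence $\varphi^{t_n}\to\varphi$. The same merging, with uniform convergence throughout, gives (ii). Without this (or an equivalent) device, (i) --- and everything in (iii) that you build on it --- is not actually established.

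A second genuine gap is in (iv). You claim that for Lagrange stable $\psi$ ``the relevant convergences are automatically uniform,'' so that $\mathfrak M_{\psi}\subseteq\mathfrak M_{\varphi}$ supplies the uniform comparability needed for Bochner's criterion. Lagrange stability only gives relative compactness in the compact-open topology; it does not upgrade compact-open convergence of $\psi^{t_n}$ to uniform convergence on $\mathbb R$, and even where such an upgrade holds for $\psi$ (as it does when $\psi$ is almost periodic), the hypothesis $\mathfrak M_{\psi}\subseteq\mathfrak M_{\varphi}$ still yields only compact-open convergence of $\varphi^{t_n}$, which is not Bochner's criterion for $\varphi$. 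The standard route transfers almost periodicity and almost automorphy via the double-sequence (bi-limit) characterization, which is formulated entirely in the compact-open topology and therefore passes through $\mathfrak M_{\psi}\subseteq\mathfrak M_{\varphi}$ directly. The remaining items --- the stationary, periodic, Levitan and almost recurrent cases of (iii), the transfer of Lagrange stability in (iv), and the role of the extra hypothesis in (v) --- are essentially correct as sketched.
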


\subsection{L\'evy process}\label{levy}
Throughout the paper, we fix a complete probability space $(\Omega, \mathcal{F}, \mathbf{P} )$
and assume that $(\mathbb{H}, |\cdot|)$ and
$(U,|\cdot|_U)$ are real separable Hilbert spaces. Denote by $(L(U,\mathbb{H}),\|\cdot\|_{L(U,\mathbb H)})$
the space of all bounded linear operators from $U$ to $\mathbb{H}$. The L\'evy processes we consider are $U$-valued.
We now review the definition of L\'evy processes and
the important L\'evy-It\^o decomposition theorem; for more details, see \cite{A,K}.

\begin{definition}\rm
A $U$-valued stochastic process $L=(L(t),t\ge 0)$ is called
\emph{L\'{e}vy process} if it has the following three properties:
\begin{enumerate}
  \item $L(0)=0$ almost surely.
  \item $L$ has stationary and independent increments,
  i.e. the law of $L(t+h)-L(t)$ does not depend on $t$ and for all $0\leq t_1<t_2<...<t_n<\infty$, random variables
  $L(t_1)$, $L(t_2)-L(t_1)$,..., $L(t_n)-L(t_{n-1})$ are independent.
  \item $L$ is \emph{stochastically continuous}.
  i.e. for all $\epsilon>0$ and for all $s>0$
   \[
     \lim_{t\rightarrow s}\mathbf P(|L(t)-L(s)|_U>\epsilon)=0.
  \]
\end{enumerate}
\end{definition}

Since a L\'evy process $L$ is c\`adl\`ag, the associated \emph{jump process}
$\Delta L=(\Delta L(t), t\geq 0)$ is given by $\Delta
L(t)=L(t)-L(t-)$. For any Borel set $B$ in
$U-\{0\}$, define the random counting measure
\[
   N(t,B)(\omega):=\sharp \{0\leq s\leq t: \Delta L(s)(\omega)\in B\}
   = \sum_{0\leq s\leq t} \chi_ B (\Delta L(s)(\omega)),
\]
where $\chi_ B$ is the indicator function of $B$. We call $\nu(\cdot):=\mathbb{E}(N(1,\cdot))$ the {\em intensity measure}
of $L$. We say that a Borel set $B$ in
$U-\{0\}$ is {\em bounded below} if $0\notin \overline{B}$, the
closure of $B$. If $B$ is bounded below, then $N(t,B)<\infty$ holds almost
surely for all $t\geq 0$. For given $B$, $(N(t,B), t\geq 0)$ obeys Poisson distribution
with intensity $\nu (B)$, so $(N(t,B), t\ge 0)$ is a
Poisson process and $N$ is called {\em Poisson random measure}.
For each $t\geq 0$ and $B$ bounded below, we define
the \emph{compensated Poisson random measure} by
\[
\widetilde{N}(t,B)=N(t,B)-t\nu(B).
\]

\begin{prop}[L\'evy-It\^o decomposition]\label{lid}
A $U$-valued L\'{e}vy process $L$ can be represented as
\begin{equation}\label{levy-ito}
L(t)=at+W(t)+ \int_{|x|_U <1} x \widetilde N (t,{\rm d }x)+ \int_{|x|_U
\ge 1} x N (t,{\rm d}x),
\end{equation}
where $a\in U$ and $W$ is a $U$-valued $\mathcal Q$-Wiener process. $N$ is a Poisson random measure
on $\R^+ \times (U- \{0\})$ with intensity $\nu$, which is independent of $W$.
Here the intensity measure $\nu$ satisfies
\begin{equation}\label{nu}
\int_U(|x|_U^2\wedge1)\nu({\rm d } x)<\infty
\end{equation}
and $\widetilde N$ is the compensated Poisson random measure of $N$.
\end{prop}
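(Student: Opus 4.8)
The plan is to construct the decomposition pathwise by splitting the jumps of $L$ according to their magnitude and then identifying the jumpless remainder as a Gaussian process. First I would partition $U\setminus\{0\}$ into the shells $A_n:=\{x:2^{-n}\le |x|_U<2^{-n+1}\}$ together with the outer region $\{|x|_U\ge 1\}$, and use the intensity measure $\nu$ to control the jump activity on each piece.

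For the large jumps, since $\{x:|x|_U\ge 1\}$ is bounded below we have $N(t,\{|x|_U\ge 1\})<\infty$ almost surely for every $t$, so $\int_{|x|_U\ge 1}x\,N(t,{\rm d}x)$ is literally the finite sum of the large jumps of $L$ up to time $t$. Being built from the Poisson random measure $N$, it is a compound Poisson process, hence a c\`adl\`ag L\'evy process in its own right.

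For the small jumps, I would set $M_n(t):=\int_{2^{-n}\le |x|_U<1}x\,\widetilde N(t,{\rm d}x)$ for each $n$. Each $M_n$ is a mean-zero square-integrable martingale, and the isometry for compensated Poisson integrals gives $\mathbb E|M_n(t)-M_m(t)|^2=t\int_{2^{-n}\le|x|_U<2^{-m}}|x|_U^2\,\nu({\rm d}x)$ for $n>m$. The hypothesis \eqref{nu} forces $\int_{|x|_U<1}|x|_U^2\,\nu({\rm d}x)<\infty$, so $(M_n(t))_n$ is Cauchy in $L^2$; a Doob maximal inequality upgrades this to uniform convergence on compact time intervals and produces a c\`adl\`ag limiting martingale, which is precisely $\int_{|x|_U<1}x\,\widetilde N(t,{\rm d}x)$. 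Subtracting both jump integrals, I then set $V(t):=L(t)-\int_{|x|_U\ge 1}x\,N(t,{\rm d}x)-\int_{|x|_U<1}x\,\widetilde N(t,{\rm d}x)$; by construction $V$ has no jumps and inherits stationary independent increments together with stochastic continuity, so it is a continuous L\'evy process. The classical structure theorem that such a process must be Gaussian then yields $V(t)=at+W(t)$ with $a\in U$ a deterministic drift and $W$ a centered Gaussian process with covariance linear in $t$, i.e. a $\mathcal Q$-Wiener process; the independence of $W$ and $N$ follows from the independence of the increments of $L$ over disjoint size-regions, which makes the Gaussian and jump components independent.

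The hard part will be the small-jump analysis — justifying the $L^2$ convergence of the compensated integrals through \eqref{nu} and promoting the limit to a well-defined c\`adl\`ag process — together with the structural input that a jumpless L\'evy process is necessarily Gaussian, which is where the L\'evy--Khintchine characteristic-function analysis does the real work.
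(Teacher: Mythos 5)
The paper does not prove this proposition at all: it is quoted as a classical theorem, with the reader referred to Applebaum \cite{A} and Kunita \cite{K} for details, so there is no in-paper argument to compare against. Your sketch is, in substance, the standard textbook proof from those references (partition of the jumps by magnitude, compound Poisson treatment of the jumps in a set bounded below, $L^2$/Doob convergence of the compensated small-jump integrals using $\int_{|x|_U<1}|x|_U^2\,\nu({\rm d}x)<\infty$, and identification of the continuous remainder as $at+W(t)$), and as an outline it is correct.

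Two steps are asserted more lightly than they deserve. First, that the remainder $V(t):=L(t)-\int_{|x|_U\ge 1}x\,N(t,{\rm d}x)-\int_{|x|_U<1}x\,\widetilde N(t,{\rm d}x)$ ``has no jumps by construction'' is not automatic: the small-jump term is an $L^2$ limit, not a pathwise sum of the jumps of $L$, so one must actually prove that $V$ has a.s.\ continuous paths (in the standard treatment this is done by controlling $\Delta V$ through the uniform-on-compacts convergence of the approximating martingales and the fact that every jump of $L$ lies in exactly one shell). Second, and more seriously, the independence of $W$ and $N$ does not ``follow from the independence of the increments of $L$ over disjoint size-regions'': the increments of $L$ are not split by size-region a priori, and the mutual independence of the Gaussian part and the family of jump integrals is one of the genuinely hard points of the theorem, normally established by a characteristic-function/exponential-martingale computation (It\^o's argument) or by the orthogonality of continuous and purely discontinuous martingales. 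You correctly flag the Gaussian structure theorem as nontrivial but under-weight this independence claim; in the infinite-dimensional setting of the proposition one also needs the additional input that the Gaussian part has a trace-class covariance, i.e.\ is a genuine $\mathcal Q$-Wiener process, which is part of what \cite{PZ} supplies. None of this invalidates your outline, but these are the places where the cited references do real work that your sketch elides.
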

As for $\mathcal Q$-Winner processes and the stochastic integral based on them, the monograph \cite{DZ}
provides a thorough description. Assume that $L_1$ and $L_2$ are two independent, identically distributed L\'evy processes with
decompositions as in Proposition \ref{lid} and let
\[
L (t) =\left\{ \begin{array}{ll}
                 L_1(t), &  \hbox{ for } t\ge 0, \\
                 -L_2(-t), &  \hbox{ for } t\leq 0.
               \end{array}
 \right.
\]
Then $L$ is a two-sided L\'evy process. In this paper, we consider
 two-sided L\'evy process $L$ which is defined on the filtered probability
space $(\Omega,\mathcal F,\mathbf P,(\mathcal F_t)_{t\in\mathbb
R})$ and suppose that the covariance operator $\mathcal Q$ of $W$ is of trace class, i.e. ${\rm Tr} \mathcal Q < \infty$.

\begin{remark}\label{rnu} \rm
It follows from \eqref{nu} that $\int_{|x|_U\ge 1} \nu({\rm d } x)<
\infty$. For convenience, we set hereafter
\[
b:=\int_{|x|_U\ge 1} \nu({\rm d } x).
\]
\end{remark}

\begin{remark}\label{Lp}\rm
Note that the stochastic process $\widetilde L=(\widetilde L(t),
t\in \mathbb R)$ given by $\widetilde L(t): = L(t+ s) -L(s)$ for
some $s\in \mathbb R$ is also a two-sided L\'evy process which
shares the same law as $L$. In particular, when $s\in\mathbb R^+$,
the similar conclusion holds for one-sided L\'evy processes.
\end{remark}

\subsection{Stochastic differential equations with L\'evy noise}
Consider the following SDE driven by L\'evy noise
\begin{equation}\label{L1}
{\rm d } Y(t)=(AY(t)+f(t,Y(t))){\rm d }t+g(t,Y(t)){\rm d }L(t),
\end{equation}
where $A$ is an infinitesimal generator which generates a $\mathcal C^0$-semigroup
$\{T(t)\}_{t\geq0}$ on $\mathbb{H}$; $f:\mathbb R\times \mathbb H\to\mathbb H$,
$g:\mathbb R\times \mathbb H\to L(U,\mathbb H)$; $L$ is a $U$-valued L\'evy process.
When the L\'evy process $L$ is $p$-integrable $(p\ge 1)$, i.e. $\mathbb E|L(t)|^p<\infty$
for all $t\in\R$, $L$ has the following form of L\'evy-It\^o decomposition
\begin{equation*}
L(t)=at+W(t)+\int_U x \widetilde N(t,{\rm d }x).
\end{equation*}
Then, we only need to consider the SDE of the form
\begin{equation*}
{\rm d }Y(t)=(AY(t)+f(t,Y(t))){\rm d }t+g(t,Y(t)){\rm d }W(t)+\int_U F(t,Y(t-),x) \widetilde N({\rm d }t,{\rm d }x).
\end{equation*}
In this paper, we do not assume that the L\'{e}vy process $L$ is $p$-integrable. Then by
L\'evy-It\^o decomposition \eqref{levy-ito}, equation (\ref{L1}) reads
\begin{align}\label{fullform}
{\rm d }Y(t)=&(AY(t)+f(t,Y(t))){\rm d }t+g(t,Y(t)){\rm d }W(t) \\
&+\int_{|x|_U<1}F(t,Y(t-),x)\widetilde N({\rm d }t,{\rm d }x)+\int_{|x|_U\geq1}G(t,Y(t-),x)N({\rm d }t,{\rm d }x),\nonumber
\end{align}
where $F$ and $G$ are $\mathbb H$-valued. It allows us to study large jumps with considerable probability.
We set $\mathcal F_t:=\sigma\{L(u):u\leq t\}$.

\begin{definition}\label{four one}\rm
An $\mathcal{F}_t$-adapted process $\{Y(t)\}_{t\in\R}$ is called a {\em mild solution} of (\ref{fullform})
if it satisfies the corresponding stochastic integral equation
\begin{align}\label{SDEA}
 Y(t)=&T(t-r)Y(r)+\int_r^t T(t-s)f(s,Y(s)){\rm d }s+\int_r^t T(t-s)g(s,Y(s)){\rm d }W(s) \\
 &+\int_r^t\int_{|x|_U<1}T(t-s)F(s,Y(s-),x)\widetilde{N}({\rm d }s,{\rm d }x) \nonumber\\
 &+\int_r^t\int_{|x|_U\geq1}T(t-s)G(s,Y(s-),x)N({\rm d }s,{\rm d }x), \nonumber
\end{align}
for all $t\geq r$ and each $r\in \mathbb R$.
\end{definition}

\subsection{Compatible (strong compatible) solutions in distribution}

Let $\mathcal P(\mathbb H)$ be the space of all Borel probability measures on $\mathbb H$ endowed
with the $\beta$ metric:
$$
\beta (\mu,\nu) :=\sup\left\{ \left| \int f {\rm d } \mu - \int f {\rm d }\nu\right|: \|f\|_{BL} \le 1
\right\}, \quad \hbox{for }\mu,\nu\in \mathcal P(\mathbb H).
$$
Here $f$ varies in the space of bounded Lipschitz continuous real-valued functions on $\mathbb H$ with the norm
\[
\|f\|_{BL}= Lip(f) + \|f\|_\infty,
\]
where
\[
 Lip(f)=\sup_{x\neq y} \frac{|f(x)-f(y)|}{|x-y|},~~ \|f\|_{\infty}=\sup_{x\in \mathbb H}|f(x)|.
\]
A sequence $\{\mu_n\}\subset \mathcal P(\mathbb H)$ is
said to {\em weakly converge} to $\mu$ if $\int f {\rm d}\mu_n\to \int f {\rm d}\mu$
for all $f\in C_b(\mathbb H)$, where $C_b(\mathbb H)$ is the space
of all bounded continuous real-valued functions on $\mathbb H$. As we know that
$(\mathcal P(\mathbb H),\beta)$ is a separable complete metric space and that a sequence $\{\mu_n\}$
weakly converges to $\mu$ if and only if $\beta(\mu_n, \mu) \to0$ as $n\to\infty$.
See \cite[\S 11.3]{Dudley} for $\beta$ metric and related properties.

\begin{definition}\label{aad}\rm
A sequence of random variables $\{x_n\}$ is said to \emph{converge in distribution} to the random variable $x$ if the corresponding laws
$\{\mu_n\}$ of $\{x_n\}$ weakly converge to the law $\mu$ of $x$, i.e. $\beta(\mu_n,\mu)\to 0$.
\end{definition}

\begin{definition} \rm
Let $\{\varphi (t)\}_{t\in\mathbb R}$ be a mild solution of \eqref{fullform}. Then $\varphi$ is called
{\em compatible} (respectively, {\em strongly compatible}) {\em in distribution} if
$\mathfrak N_{(f,g,F,G)} \subseteq \tilde{\mathfrak N}_{\varphi}$
(respectively, $\mathfrak M_{(f,g,F,G)} \subseteq \tilde{\mathfrak M}_{\varphi}$),
where $\tilde{\mathfrak N}_{\varphi}$ (respectively, $\tilde{\mathfrak M}_{\varphi}$)
means the set of all sequences $\{t_n\}\subset\mathbb R$
such that the sequence $\{\varphi(\cdot+t_n)\}$ converges to $\varphi(\cdot)$
(respectively, $\{\varphi(\cdot+t_n)\}$ converges) in distribution uniformly on any compact interval.
\end{definition}

\subsection{The function space $BUC(\mathbb R\times \mathcal Y,\mathcal X)$}
\begin{definition}\label{defQ1} \rm
A function $f:\mathbb R\times \mathcal Y\to \mathcal X$ is called {\em continuous at $t_0\in\mathbb R$ uniformly w.r.t. $y\in
Q$} if for any $\varepsilon >0$ there exists $\delta
=\delta(t_0,\varepsilon)>0$ such that $|t-t_0|<\delta$ implies
$\sup\limits_{y\in Q}\gamma(f(t,y),f(t_0,y))<\varepsilon$. The
function $f:\mathbb R\times \mathcal Y\to \mathcal X$ is called {\em continuous on
$\mathbb R$ uniformly w.r.t. $y\in Q$} if it is continuous at every
point $t_0\in\mathbb R$ uniformly w.r.t. $y\in Q$.
\end{definition}

\begin{remark}\label{BUC}\rm
Note that if $Q$ is a compact subset of $\mathcal Y$ and $f:\mathbb R\times \mathcal Y\to \mathcal X$ is
a continuous function, then $f$ is continuous on $\mathbb R$ uniformly w.r.t. $y\in Q$.
\end{remark}

Denote by $BUC(\mathbb R\times \mathcal Y,\mathcal X)$ the set of all functions
$f:\mathbb R\times \mathcal Y\to \mathcal X$ possessing the following properties:
\begin{enumerate}
\item $f$ are continuous in $t$ uniformly w.r.t. $y$ on every bounded subset $Q\subseteq \mathcal Y$.
\item $f$ are bounded on every bounded subset from $\mathbb R\times \mathcal Y$.
\end{enumerate}

Let $f,g\in BUC(\mathbb R\times \mathcal Y, \mathcal X)$ and $\{Q_n\}$ be a sequence of bounded subsets
from $\mathcal Y$ such that $Q_n\subset Q_{n+1}$ for any
$n\in\mathbb N$ and $\mathcal Y=\bigcup\limits_{n\ge 1} Q_n$.
We equip the function space $BUC(\mathbb R\times \mathcal Y, \mathcal X)$ with the metric
\begin{equation}\label{eqQD1}
d_{BUC}(f,g):=\sum_{n=1}^{\infty}\frac{1}{2^n}\frac{d_n(f,g)}{1+d_{n}(f,g)},
\end{equation}
where $d_{n}(f,g):=\sup\limits_{|t|\le n,\ y\in Q_n}\gamma(f(t,y),g(t,y))$.
Then $(BUC(\mathbb R\times \mathcal Y, \mathcal X),d_{BUC})$ is a complete metric space and $d_{BUC}(f_n,f)\to 0$
if and only if $f_n(t,y)\to f(t,y)$ uniformly w.r.t. $(t,y)$ on every bounded subset from $\mathbb R\times \mathcal Y$.
For given $f\in BUC(\mathbb R\times \mathcal Y,\mathcal X)$ and $\tau\in\mathbb R$,
denote the {\em translation of $f$} by $f^\tau$, i.e. $f^\tau(t,y):=
f(t+\tau,y)$ for $(t,y)\in \mathbb R\times \mathcal Y$, and the {\em hull of
$f$} by $H(f):=\overline{\{f^\tau:\tau\in\mathbb R\}}$ with the
closure being taken under the metric $d_{BUC}$ given by \eqref{eqQD1}.
Note that the mapping $\pi: \mathbb R\times BUC(\mathbb R\times \mathcal Y,\mathcal X)
\to BUC(\mathbb R\times \mathcal Y,\mathcal X)$ defined by $\pi(\tau,f):= f^\tau$
is a dynamical system, i.e. $\pi(0,f) = f$,
$\pi(\tau_1+\tau_2,f) = \pi(\tau_2,\pi(\tau_1,f))$ and the
mapping $\pi$ is continuous. See \cite[\S 1.1]{Ch2015} or \cite{Sel_71} for details.

We use $BC(\mathcal Y,\mathcal X)$ to denote the set of all
continuous and bounded on every bounded subset $Q\subset \mathcal Y$ functions $f:\mathcal Y\to \mathcal X$
and let
$$
d_{BC}(f,g):=\sum_{n=1}^{\infty}\frac{1}{2^n}\frac{d_n(f,g)}{1+d_{n}(f,g)}
$$
for any $f,g\in BC(\mathcal Y,\mathcal X)$, where $d_{n}(f,g):=\sup\limits_{y\in Q_n}\gamma(f(y),g(y))$
with $Q_n$ the same as in \eqref{eqQD1}. Then $(BC(\mathcal Y,\mathcal X),d_{BC})$ is a complete metric space.

Let now  $f\in BUC(\mathbb R\times \mathcal Y,\mathcal X)$ and $\mathfrak f :\mathbb R\to BC(\mathcal Y,\mathcal X)$ a
mapping defined by equality $\mathfrak f(t):=f(t,\cdot)$.

\begin{remark}\label{remBUC} \rm
It is not hard to check that:
\begin{enumerate}
\item  $\mathfrak M_{f}=\mathfrak
M_{\mathfrak f}$ for any $f\in BUC(\mathbb R\times \mathcal Y,\mathcal X)$;

\item $\mathfrak M_{f}^{u}=\mathfrak
M_{\mathfrak f}^{u}$ for any $f\in BUC(\mathbb R\times \mathcal Y,\mathcal X)$.
\end{enumerate}
Here $\mathfrak M_{f}$ is the set of all sequences $\{t_n\}$ such that
$f^{t_n}$ converges in the space $BUC(\mathbb R\times \mathcal Y,\mathcal X)$ and $\mathfrak M_{f}^{u}$ is
the set of all sequences $\{t_n\}$ such that $f^{t_n}$ converges
in the space $BUC(\mathbb R\times \mathcal Y,\mathcal X)$ uniformly w.r.t. $t\in\mathbb R$.
\end{remark}

\section{Linear equations}
Let $\mathfrak B$ be a Banach space with the norm $\|\cdot\|_{\mathfrak B}$.
Denote by $C_{b}(\mathbb R,\mathfrak B)$ the Banach space of all continuous and bounded mappings $\varphi :\mathbb R\to \mathfrak
B$ endowed with the norm $\|\varphi\|_{\infty}:=\sup\{\|\varphi(t)\|_{\mathfrak B}:t\in\mathbb R\}$.

\begin{remark}\label{rem hull}\rm
If $f\in C_{b}(\mathbb R,\mathfrak B)$, then for any $\widetilde f \in H(f)$ we have
$\|\widetilde f(t)\|_\mathfrak B \leq\|f\|_\infty$ for each $t\in\mathbb R$.
Here $H(f)$ means the hull of $f$ defined in Definition \ref{hull}.
\end{remark}

We define for $p\geq 2$
\[
\mathcal L^p(\mathbf{P}; \mathbb{H}):=\mathcal{L}^{p}(\Omega, \mathcal{F}, \mathbf{P}; \mathbb{H})
=\bigg\{Y:\Omega\to \mathbb H \bigg| \mathbb E|Y|^p=\int_{\Omega}|Y|^p {\rm d }\mathbf{P}<\infty\bigg\}
\]
with the norm
\[
\|Y\|_{\mathcal L^p(\mathbf{P}; \mathbb{H})}:=\left(\int_{\Omega}|Y|^p {\rm d }{\mathbf P}\right)^\frac{1}{p}.
\]
Note that $(\mathcal L^p(\mathbf{P}; \mathbb{H}),\|\cdot\|_{\mathcal L^p(\mathbf{P}; \mathbb{H})})$ is a Banach space. Set
\begin{align*}
\mathcal L^2(\mathbf P;L(U,\mathbb H)):=&\mathcal L^2(\Omega,\mathcal F,\mathbf P;L(U,\mathbb H))\\
=&\bigg\{Y:\Omega\to L(U,\mathbb H)\bigg|\mathbb E\|Y\|_{L(U,\mathbb H)}^2=\int_\Omega\|Y\|^2_{L(U,\mathbb H)}{\rm d}\mathbf P<\infty\bigg\}
\end{align*}
and define a norm by
\[
\|Y\|_{\mathcal L^2(\mathbf P;L(U,\mathbb H))}:=\left(\int_\Omega\|Y\|^2_{L(U,\mathbb H)} {\rm d}\mathbf P\right)^\frac{1}{2}.
\]
Note that $(L(U,\mathbb{H}),\|\cdot\|_{L(U,\mathbb{H})})$ and
$(\mathcal L^2(\mathbf P;L(U,\mathbb H)),\|\cdot\|_{\mathcal L^2(\mathbf P;L(U,\mathbb H))})$
are Banach spaces. Define
\begin{align*}
\mathcal L^2(\mathbf P_\nu;\mathbb H)&:= \mathcal L^2(\Omega\times U,\mathcal P_U,\mathbf P_\nu; \mathbb H)  \\
&=\bigg\{Y:\Omega\times U\to \mathbb H \bigg|
\int_{\Omega\times U}|Y|^2{\rm d }\mathbf P_\nu=\int_U\mathbb E|Y|^2\nu({\rm d}x)<\infty \bigg\},
\end{align*}
where $\mathcal P_U$ is the product $\sigma$-algebra on $\Omega\times U$ and $\mathbf P_\nu=\mathbf P\otimes \nu$.
For $Y\in \mathcal L^2(\mathbf P_\nu;\mathbb H)$, let
\[
\|Y\|_{\mathcal L^2(\mathbf P_\nu;\mathbb H)}:=\left(\int_U\mathbb E|Y|^2\nu({\rm d }x)\right)^\frac{1}{2}.
\]
Then $\mathcal L^2(\mathbf P_\nu;\mathbb H)$ is a Hilbert space equipped with the norm $\|\cdot\|_{\mathcal L^2(\mathbf P_\nu;\mathbb H)}$.

\begin{remark}\label{gQ} \rm
If the operator $\mathcal Q\in L(U)$, the space of bounded linear operators from $U$ to $U$, is nonnegative,
symmetric and ${\rm Tr}\mathcal Q<\infty$, then $L\mathcal Q^\frac{1}{2}\in L_2(U,\mathbb H)$ for all $L\in L(U,\mathbb H)$,
where the space $L_2(U,\mathbb H)$ is a separable Hilbert space that consists of all Hilbert-Schmidt operators
from $U$ to $\mathbb H$ with inner product $\langle A,B\rangle_{L_2(U,\mathbb H)}:=\sum\limits_{k\in \mathbb N}\langle Ae_k,Be_k\rangle$ and
$\{e_k\}_{k\in \mathbb N}$ an orthonormal basis of $U$.
For $g\in \mathcal L^2(\mathbf P;L(U,\mathbb H))$, we have $g\mathcal Q^\frac{1}{2}\in\mathcal L^2(\mathbf P;L_2(U,\mathbb H))$
and denote $\|g\mathcal Q^\frac{1}{2}\|_{\mathcal L^2(\mathbf P;L_2(U,\mathbb H))}
=\left(\mathbb E\|g\mathcal Q^\frac{1}{2}\|^2_{L_2(U,\mathbb H)}\right)^\frac{1}{2}$.
\end{remark}

In this section we consider the following linear SDE perturbed by L\'evy noise
\begin{align}\label{SlSDE}
{\rm d }Y(t)=&(AY(t)+f(t)){\rm d }t+g(t){\rm d }W(t) \\
&+\int_{|x|_U<1}F(t,x)\widetilde{N}({\rm d }t,{\rm d }x)
+\int_{|x|_U\geq1}G(t,x)N({\rm d }t,{\rm d }x),\qquad t\in\mathbb{R},\nonumber
\end{align}
where $A$ is an infinitesimal generator which generates a $\mathcal C^0$-semigroup $\{T(t)\}_{t\geq0}$ on
$\mathbb{H}$ and $f:\mathbb{R}\to\mathcal{L}^{2}(\mathbf{P};\mathbb{H})$,
$g:\mathbb R \to \mathcal L^2(\mathbf P;L(U,\mathbb H))$,
$F, G:\mathbb R \to\mathcal L^2(\mathbf P_\nu;\mathbb H)$. Here $W$ and $N$ are the L\'evy-It\^o decomposition
components of the two-sided L\'evy process {\em L} as in Section \ref{levy}.

\begin{theorem}\label{thmone}
Consider {\rm (\ref{SlSDE})}. Assume that $A$ generates a dissipative $\mathcal C^0$-semigroup $\{T(t)\}_{t\geq0}$ such that
\begin{equation}\label{exp}
\left\|T(t)\right\|\leq K {\rm e}^{-\omega t} \qquad \mbox {for all}~~~  t\geq 0,
\end{equation}
with $K>0$, $\omega>0$; $f\in C_b(\mathbb{R},\mathcal{L}^{2}(\mathbf{P};\mathbb{H}))$,
$g\in C_b(\mathbb R,\mathcal L^2(\mathbf P;L(U,\mathbb H)))$, and
$F, G\in C_b(\mathbb R,\mathcal L^2(\mathbf P_\nu;\mathbb H))$. Suppose that
$W$ and $N$ are the same as in Section \ref{levy}.
Then {\rm (\ref{SlSDE})} has a unique mild solution
$\varphi\in C_b(\mathbb{R},\mathcal{L}^{2}(\mathbf{P};\mathbb{H}))$ and this unique $\mathcal L^2$-bounded solution
is strongly compatible in distribution. Furthermore, $\mathfrak M_{(f,g,F,G)}^{u}\subseteq \mathfrak {\tilde{M}}_{\varphi}^{u}$,
where $\mathfrak{\tilde{M}}_{\varphi}^{u}$ is the set of all sequences $\{t_n\}$ such that the sequence
$\{\varphi(t+t_n)\}$ converges in distribution uniformly in $t\in \mathbb{R}$.
\end{theorem}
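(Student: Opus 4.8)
The plan is to construct the bounded solution explicitly by the variation-of-constants formula integrated from $-\infty$, namely
\begin{align*}
\varphi(t) =& \int_{-\infty}^t T(t-s) f(s)\,{\rm d}s + \int_{-\infty}^t T(t-s) g(s)\,{\rm d}W(s) \\
&+ \int_{-\infty}^t \int_{|x|_U<1} T(t-s) F(s,x)\,\widetilde{N}({\rm d}s,{\rm d}x) \\
&+ \int_{-\infty}^t \int_{|x|_U\ge 1} T(t-s) G(s,x)\,N({\rm d}s,{\rm d}x),
\end{align*}
and then to verify the claimed properties. First I would check that each of the four integrals converges in $\mathcal L^2(\mathbf P;\mathbb H)$ and that $\sup_t\mathbb E|\varphi(t)|^2<\infty$. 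The drift term is controlled directly by $\int_{-\infty}^t K{\rm e}^{-\omega(t-s)}\|f\|_\infty\,{\rm d}s=K\|f\|_\infty/\omega$; the Wiener and the compensated small-jump terms are controlled by the It\^o isometry together with ${\rm Tr}\,\mathcal Q<\infty$ and \eqref{nu}, giving bounds of order $K^2{\rm Tr}\,\mathcal Q\,\|g\|_\infty^2/(2\omega)$ and $K^2\|F\|_\infty^2/(2\omega)$. The delicate term is the large-jump integral, which is not a martingale; I would split $N({\rm d}s,{\rm d}x)=\widetilde{N}({\rm d}s,{\rm d}x)+\nu({\rm d}x)\,{\rm d}s$ on $\{|x|_U\ge1\}$, handle the compensated part by the It\^o isometry, and bound the compensator part by Cauchy--Schwarz against the \emph{finite} measure $\nu|_{\{|x|_U\ge1\}}$ (of total mass $b$, by Remark \ref{rnu}), obtaining a bound of order $\sqrt b\,K\|G\|_\infty/\omega$. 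The semigroup property together with a stochastic Fubini argument then shows that $\varphi$ is an $\mathcal F_t$-adapted mild solution in the sense of Definition \ref{four one}. Uniqueness is immediate: the difference $Z$ of two $\mathcal L^2$-bounded mild solutions satisfies $Z(t)=T(t-r)Z(r)$ for $t\ge r$, so $\mathbb E|Z(t)|^2\le K^2{\rm e}^{-2\omega(t-r)}\sup_s\mathbb E|Z(s)|^2\to0$ as $r\to-\infty$, forcing $Z\equiv0$.

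The core of the argument is strong compatibility. Given $\{t_n\}\in\mathfrak M_{(f,g,F,G)}$ with limit $(\bar f,\bar g,\bar F,\bar G)$, I would first invoke the shift-invariance of the law of the two-sided L\'evy process (Remark \ref{Lp}): changing variables $s\mapsto s+t_n$ in the formula for $\varphi(t+t_n)$ replaces $(f,g,F,G)$ by their translates $(f^{t_n},g^{t_n},F^{t_n},G^{t_n})$ and the driving pair $(W,N)$ by a time-shifted copy with the same law. Hence $\varphi(\cdot+t_n)$ has the same distribution as the solution $\varphi_n$ built from the translated coefficients driven by the \emph{original} noise. Since the limit coefficients are again bounded and continuous (Remark \ref{rem hull}), the first part produces the bounded solution $\bar\varphi$ driven by $(\bar f,\bar g,\bar F,\bar G)$, and it then suffices to show $\varphi_n\to\bar\varphi$ in $\mathcal L^2$ uniformly on compact $t$-intervals. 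Estimating the difference term-by-term exactly as in the existence step, I would split each $\int_{-\infty}^t=\int_{-\infty}^{-M}+\int_{-M}^t$: the tail is uniformly small by exponential decay and the uniform boundedness of the coefficients, while the main part is controlled by $\sup_{s\in[-M,T_0]}\|f^{t_n}(s)-\bar f(s)\|$ and the analogous quantities for $g,F,G$, which tend to $0$ by the definition of $\mathfrak M$ (uniform convergence on compacts). Because $\beta\big(\mathrm{Law}(X),\mathrm{Law}(Y)\big)\le\mathbb E|X-Y|\le\|X-Y\|_{\mathcal L^2(\mathbf P;\mathbb H)}$, this $\mathcal L^2$-convergence yields $\varphi(\cdot+t_n)\to\bar\varphi$ in distribution uniformly on every compact interval, i.e. $\{t_n\}\in\tilde{\mathfrak M}_\varphi$.

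For the final assertion, if instead $\{t_n\}\in\mathfrak M^u_{(f,g,F,G)}$ the coefficient convergence is uniform on all of $\mathbb R$, so the tail split is unnecessary and the same estimates give $\sup_{t\in\mathbb R}\mathbb E|\varphi_n(t)-\bar\varphi(t)|^2\to0$, whence $\varphi(\cdot+t_n)$ converges in distribution uniformly in $t\in\mathbb R$, i.e. $\{t_n\}\in\tilde{\mathfrak M}^u_\varphi$.

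I expect the main obstacle to be the rigorous treatment of the large-jump term in tandem with the noise-shift step: one must justify both the $\mathcal L^2$-bound for the non-martingale Poisson integral (through the compensation splitting and the finiteness of $\nu|_{\{|x|_U\ge1\}}$) and the identity in law between $\varphi(\cdot+t_n)$ and $\varphi_n$, which requires expressing the mild solution as a measurable functional of the driving L\'evy data and then appealing to the stationarity of its increments.
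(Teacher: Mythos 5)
Your proposal is correct and follows essentially the same route as the paper: the same variation-of-constants representation integrated from $-\infty$, the same term-by-term $\mathcal L^2$ estimates with the splitting $N=\widetilde N+\nu\,{\rm d}s$ and Cauchy--Schwarz against the finite mass $b$ for the large-jump term, the same tail-splitting of $\int_{-\infty}^t$ for the compatibility step, and the same shift-of-noise identity in law between $\varphi(\cdot+t_n)$ and $\varphi_n$. The only point the paper treats that you pass over is the verification that $\varphi$ is $\mathcal L^2$-continuous in $t$ (needed for $\varphi\in C_b(\mathbb R,\mathcal L^2(\mathbf P;\mathbb H))$), which is routine but should be recorded.
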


\begin{proof}
{\bf Step 1. There exists a unique mild solution $\varphi\in C_b(\mathbb{R},\mathcal{L}^{2}(\mathbf{P};\mathbb{H}))$.}
By (\ref{SDEA}) and {\rm(\ref{exp})}, if $\varphi$ is $\mathcal L^2$-bounded,
then it is a mild solution of {\rm(\ref{SlSDE})} if and only if it is given by the following form
\begin{align}\label{SDEA1}
\varphi(t)=&\int_{-\infty}^t T(t-\tau)f(\tau){\rm d }\tau+\int_{-\infty}^t T(t-\tau)g(\tau){\rm d }W(\tau)  \\
 &+\int_{-\infty}^t\int_{|x|_U<1}T(t-\tau)F(\tau,x)\widetilde{N}({\rm d }\tau,{\rm d }x)
 +\int_{-\infty}^t\int_{|x|_U\geq1}T(t-\tau)G(\tau,x)N({\rm d }\tau,{\rm d }x)\nonumber.
\end{align}
First we prove the uniqueness of the $\mathcal L^2$-bounded solution. If $\psi\in C_b(\mathbb{R},\mathcal{L}^{2}(\mathbf{P};\mathbb{H}))$
is another solution of ({\ref{SlSDE}}), then $\varphi(t)-\psi(t)$ is the solution of the equation ${\rm d }Y(t)=AY(t){\rm d }t$.
Since {\rm(\ref{exp})} holds, this equation has only zero solution in
$C_b(\mathbb{R},\mathcal{L}^{2}(\mathbf{P};\mathbb{H}))$. Therefore $\varphi=\psi$.

Now we prove that $\varphi$ given by \eqref{SDEA1} is $\mathcal L^2$-bounded, i.e.
$\sup\limits_{t\in\mathbb R}\|\varphi(t)\|_{\mathcal{L}^{2}(\mathbf{P};\mathbb{H})}<\infty$. According to (\ref{SDEA1}),
\begin{align}\label{3.5}
\mathbb E|\varphi(t)|^2 \leq& 4\mathbb{E}\left|\int_{-\infty}^t T(t-\tau)f(\tau){\rm d }\tau \right|^2
+4\mathbb{E}\left|\int_{-\infty}^{t}T(t-\tau)g(\tau){\rm d }W(\tau)\right|^2  \\
 &+4\mathbb{E}\left|\int_{-\infty}^{t}\int_{|x|_U<1}T(t-\tau)F(\tau,x)\widetilde{N}({\rm d }\tau,{\rm d }x)\right|^2 \nonumber \\
 &+4\mathbb{E}\left|\int_{-\infty}^{t}\int_{|x|_U\geq1}T(t-\tau)G(\tau,x)N({\rm d }\tau,{\rm d }x)\right|^2. \nonumber
\end{align}
For the first term of (\ref{3.5}), by Cauchy-Schwarz inequality we have
\begin{align}\label{third seven}
\mathbb{E}\left|\int_{-\infty}^t T(t-\tau)f(\tau){\rm d }\tau \right|^2
&\leq \mathbb E\left(\int_{-\infty}^{t}Ke^{-\omega(t-\tau)}|f(\tau)|{\rm d }\tau \right)^2  \\
&\leq K^2\int_{-\infty}^{t}{\rm e}^{-\omega(t-\tau)}{\rm d }\tau\int_{-\infty}^{t}{\rm e}^{-\omega(t-\tau)}\mathbb E|f(\tau)|^2{\rm d }\tau \nonumber\\
&\leq\frac{K^2}{\omega^2}\sup_{\tau\in\mathbb R}\mathbb E|f(\tau)|^2. \nonumber
\end{align}
For the second term, using It\^o's isometry property we get
\begin{align}\label{third eight}
\mathbb{E}\left|\int_{-\infty}^{t}T(t-\tau)g(\tau){\rm d }W(\tau)\right|^2
&=\int_{-\infty}^{t}\mathbb{E}\|T(t-\tau)g(\tau)\mathcal Q^\frac{1}{2}\|^2_{L_2(U,\mathbb{H})}{\rm d }\tau \\
&\leq\int_{-\infty}^{t}K^2{\rm e}^{-2\omega(t-\tau)}
\mathbb{E}\|g(\tau)\mathcal{Q}^\frac{1}{2}\|_{L_2(U,\mathbb{H})}^2{\rm d }\tau \nonumber\\
&\leq\frac{K^2}{2\omega}\sup_{\tau\in\mathbb R}\mathbb{E}\|g(\tau)\mathcal{Q}^\frac{1}{2}\|_{L_2(U,\mathbb{H})}^2. \nonumber
\end{align}
For the third term, by properties of integrals for Poisson random measures we gain
\begin{align}\label{third nine}
\mathbb{E}\left|\int_{-\infty}^{t}\int_{|x|_U<1}T(t-\tau)F(\tau,x)\widetilde{N}
({\rm d }\tau,{\rm d }x)\right|^2
&=\int_{-\infty}^{t}\int_{|x|_U<1}\mathbb{E}|T(t-\tau)F(\tau,x)|^2\nu({\rm d }x){\rm d }\tau  \\
&\leq\int_{-\infty}^{t}\int_{|x|_U<1}K^2{\rm e}^{-2\omega(t-\tau)}\mathbb{E}|F(\tau,x)|^2\nu({\rm d }x){\rm d }\tau \nonumber\\
&\leq\frac{K^2}{2\omega}\sup_{\tau\in\mathbb R}\int_{|x|_U<1}\mathbb{E}|F(\tau,x)|^2\nu({\rm d }x). \nonumber
\end{align}
For the fourth term, by properties of integrals for Poisson random measures and Cauchy-Schwarz inequality, we obtain
\begin{align}\label{third ten}
\mathbb{E}&\left|\int_{-\infty}^{t}\int_{|x|_U\geq1}T(t-\tau)G(\tau,x)N({\rm d }\tau,{\rm d }x)\right|^2 \\
&\leq2\mathbb{E}\left|\int_{-\infty}^{t}\int_{|x|_U\geq1}
T(t-\tau)G(\tau,x)\widetilde{N}({\rm d }\tau,{\rm d }x)\right|^2 +2\mathbb{E}\left|\int_{-\infty}^{t}\int_{|x|_U\geq1}
T(t-\tau)G(\tau,x)\nu({\rm d }x){\rm d }\tau\right|^2  \nonumber\\
&\leq2\int_{-\infty}^{t}\int_{|x|_U\geq1}K^2{\rm e}^{-2\omega(t-\tau)}\mathbb{E}|G(\tau,x)|^2\nu({\rm d }x){\rm d }\tau\nonumber\\
&\quad+2\int_{-\infty}^{t}\int_{|x|_U\geq1}K^2{\rm e}^{-\omega(t-\tau)}
\nu({\rm d }x){\rm d }\tau
\cdot \int_{-\infty}^{t}\int_{|x|_U\geq1}{\rm e}^{-\omega(t-\tau)}\mathbb{E}|G(\tau,x)|^2\nu({\rm d }x){\rm d }\tau \nonumber \\
&\leq\left(\frac{K^2}{\omega}+\frac{2K^2b}{\omega^2}\right)\sup_{\tau\in\mathbb R}\int_{|x|_U\geq1}\mathbb{E}|G(\tau,x)|^2\nu({\rm d }x). \nonumber
\end{align}
From (\ref{3.5})-(\ref{third ten}), we have for any $t\in\mathbb R$
\begin{align}\label{item2}
\mathbb{E}|\varphi(t)|^2\leq&\frac{K^2}{\omega^2}\bigg(4\sup_{\tau\in\mathbb R}\mathbb E|f(\tau)|^2+{2\omega}\sup_{\tau\in\mathbb R}\mathbb{E}\|g(\tau)\mathcal{Q}^\frac{1}{2}\|_{L_2(U,\mathbb{H})}^2
+{2\omega}\sup_{\tau\in\mathbb R}\int_{|x|_U<1}\mathbb{E}|F(\tau,x)|^2\nu({\rm d }x)\\
&\quad+(4\omega+8b)\sup_{\tau\in\mathbb R}\int_{|x|_U\geq1}\mathbb{E}|G(\tau,x)|^2\nu({\rm d }x)\bigg),\nonumber
\end{align}
and hence
\begin{equation}\label{item22}
\|\varphi\|_{\infty}^2\leq\frac{K^2}{\omega^2}\left(4\|f\|_{\infty}^2+2\omega\|g\mathcal Q^\frac{1}{2}\|_{\infty}^2
+2\omega\|F\|_\infty^2+(4\omega+8b)\|G\|_\infty^2\right).
\end{equation}

Next we prove that $\varphi$ is $\mathcal L^2$-continuous. To verify this, we can weaken the
condition of $\{T(t)\}_{t\geq 0}$ to a $\mathcal C^0$-semigroup, not necessarily dissipative.
In other words, there exist constants $M>0$ and $\delta>0$ such that $\|T(t)\|\le M \rm e^{\delta t}$ holds for all $t\ge 0$.
Since $\varphi$ is a $\mathcal L^2$-bounded solution of (\ref{SlSDE}), i.e. it satisfies (\ref{SDEA}),
then similar to (\ref{third seven})-(\ref{third ten}) it follows from Cauchy-Schwarz inequality,
It\^o's isometry property and properties of integrals for Poisson random measures that for $t\geq r$
\begin{align*}
\mathbb E &|\varphi(t)-\varphi(r)|^2 \\
&\leq 5\mathbb E|T(t-r)\varphi(r)-\varphi(r)|^2
+5\mathbb E\left|\int_r^tT(t-\tau)f(\tau){\rm d }\tau\right|^2 \\
&\quad+5\mathbb E\left|\int_r^tT(t-\tau)g(\tau){\rm d }W(\tau)\right|^2
+5\mathbb E\left|\int_r^t\int_{|x|_U<1}T(t-\tau)F(\tau,x)\widetilde N({\rm d }\tau,{\rm d }x)\right|^2 \\
&\quad+5\mathbb E\left|\int_r^t\int_{|x|_U\geq1}T(t-\tau)G(\tau,x)\widetilde N({\rm d }\tau,{\rm d }x)
+\int_r^t\int_{|x|_U\geq1}T(t-\tau)G(\tau,x)\nu({\rm d }x){\rm d }\tau\right|^2 \\
&\leq 5\mathbb E|T(t-r)\varphi(r)-\varphi(r)|^2
+5 M^2 {\rm e}^{2\delta(t-r)} \int_r^t 1 {\rm d }\tau \cdot \int_r^t \mathbb{E} |f(\tau)|^2 {\rm d }\tau \\
&\quad+5 M^2 {\rm e}^{2\delta(t-r)} \int_r^t \mathbb{E} \|g(\tau)\mathcal Q^\frac{1}{2}\|^2_{L_2(U,\mathbb H)} {\rm d }\tau
+5 M^2 {\rm e}^{2\delta(t-r)} \int_{r}^{t} \int_{|x|_U<1} \mathbb E |F(\tau,x)|^2\nu({\rm d }x){\rm d }\tau \\
&\quad+5 M^2 {\rm e}^{2\delta(t-r)}\bigg[2 \int_{r}^{t} \int_{|x|_U\geq
1} \mathbb{E} |G(\tau,x)|^2 \nu({\rm d }x){\rm d }\tau  \\
&\qquad+ 2\int_{r}^{t}\int_{|x|_U\geq 1} 1 \nu({\rm d }x){\rm d }\tau \cdot \int_{r}^{t} \int_{|x|_U\geq 1}
 \mathbb{E} |G(\tau,x)|^2 \nu({\rm d }x){\rm d }\tau \bigg].
\end{align*}
Note that $\{T(t)\}_{t\geq 0}$ is a $\mathcal C^0$-semigroup, so $|T(t-r)x-x|\to 0$ for any $x\in\mathbb H$ as $t\to r^+$.
Since $\|T(t)\|\leq M \rm e^{\delta t}$, $t\geq 0$, we have
\begin{align*}
\mathbb E|T(t-r)\varphi(r)-\varphi(r)|^2&=\mathbb E|(T(t-r)-Id)\varphi(r)|^2 \\
&\leq2(\|T(t-r)\|^2+1)\mathbb E|\varphi(r)|^2< \infty.
\end{align*}
It follows from Lebesgue dominated convergence theorem that $\mathbb{E}|T(t-r)Y(r)-Y(r)|^2\to 0$, as $t\to r^+$.
For $f\in C_b(\mathbb{R},\mathcal{L}^{2}(\mathbf{P};\mathbb{H}))$, we have
\begin{align*}
\sup_{\tau\in\mathbb R}\mathbb E|f(\tau)|^2<\infty;
\end{align*}
and analogically
\begin{align*}
\sup_{\tau\in\mathbb R}\mathbb{E} \|g(\tau)\mathcal Q^\frac{1}{2}\|^2_{L_2(U,\mathbb H)} <\infty,
\end{align*}

\begin{align*}
\sup_{\tau\in\mathbb R}\int_{|x|_U<1}\mathbb E|F(\tau,x)|^2\nu({\rm d }x)<\infty,
\end{align*}

\begin{align*}
\sup_{\tau\in\mathbb R}\int_{|x|_U\geq 1}\mathbb E|G(\tau,x)|^2\nu({\rm d }x)<\infty.
\end{align*}
Hence $\mathbb E|\varphi(t)-\varphi(r)|^2\to 0$ as $t\to r^{+}$. Using same arguments, we get
$\mathbb E|\varphi(t)-\varphi(r)|^2\to 0$ as $t\to r^{-}$. So $\varphi$ is $\mathcal L^2$-continuous.

{\bf Step 2. The unique $\mathcal L^2$-bounded solution is strongly compatible in distribution.}
Let $l>k>0$ and $t\in[-k,k]$. Note that
\begin{align}\label{11}
&\max_{|t|\leq k}\int_{-\infty}^{t}{\rm e}^{-\omega(t-\tau)}\mathbb{E}|f(\tau)|^2{\rm d }\tau\\
&\quad\leq\max_{|t|\leq k}\int_{-l}^{t}{\rm e}^{-\omega(t-\tau)}\mathbb{E}|f(\tau)|^2{\rm d }\tau
+\max_{|t|\leq k}\int_{-\infty}^{-l}{\rm e}^{-\omega(t-\tau)}\mathbb{E}|f(\tau)|^2{\rm d }\tau\nonumber \\
&\quad\leq\frac{1}{\omega}\max_{|\tau|\leq l}\mathbb{E}|f(\tau)|^2+\frac{1}{\omega}{\rm e}^{-\omega(l-k)}\sup_{\tau\in\mathbb R}\mathbb E|f(\tau)|^2. \nonumber
\end{align}
Then  by (\ref{11}), from (\ref{third seven})-(\ref{third ten}) we respectively have
\begin{align}\label{third eleven}
\max_{|t|\leq k}\mathbb{E}\left|\int_{-\infty}^tT(t-\tau)f(\tau){\rm d }\tau\right|^2
&\leq\max_{|t|\leq k}\frac{K^2}{\omega}\int_{-\infty}^{t}{\rm e}^{-\omega(t-\tau)}\mathbb{E}|f(\tau)|^2{\rm d }\tau  \\
&\leq\frac{K^2}{\omega^2}\max_{|\tau|\leq l}\mathbb{E}|f(\tau)|^2+\frac{K^2}{\omega^2}{\rm e}^{-\omega(l-k)}\|f\|_\infty^2,\nonumber
\end{align}
\begin{align}\label{third twelve}
&\max_{|t|\leq k}\mathbb{E}\left|\int_{-\infty}^tT(t-\tau)g(\tau){\rm d }W(\tau)\right|^2 \\
&\quad\leq\max_{|t|\leq k}K^2\int_{-\infty}^{t}{\rm e}^{-2\omega(t-\tau)}
\mathbb{E}\|g(\tau)\mathcal{Q}^\frac{1}{2}\|_{L_2(U,\mathbb{H})}^2{\rm d }\tau \nonumber \\
&\quad\leq\frac{K^2}{2\omega}\max_{|\tau|\leq l}\mathbb{E}\|g(\tau)\mathcal{Q}^\frac{1}{2}\|_{L_2(U,\mathbb{H})}^2
+\frac{K^2}{2\omega}{\rm e}^{-2\omega(l-k)}\|g\mathcal Q^\frac{1}{2}\|_\infty^2,\nonumber
\end{align}
\begin{align}\label{third thirteen}
&\max_{|t|\leq k}\mathbb E\left|\int_{-\infty}^{t}\int_{|x|_U<1}T(t-\tau)F(\tau,x)\widetilde N({\rm d }\tau,{\rm d }x)\right|^2 \\
&\quad\leq\max_{|t|\leq k}K^2\int_{-\infty}^{t}\int_{|x|_U<1}
{\rm e}^{-2\omega(t-\tau)}\mathbb{E}|F(\tau,x)|^2\nu({\rm d }x){\rm d }\tau  \nonumber\\
&\quad\leq\frac{K^2}{2\omega}\max_{|\tau|\leq l}\int_{|x|_U<1}\mathbb E|F(\tau,x)|^2\nu({\rm d }x)
+\frac{K^2}{2\omega}{\rm e}^{-2\omega(l-k)}\|F\|_\infty^2,\nonumber
\end{align}
and
\begin{align}\label{third fourteen}
&\max_{|t|\leq k}\mathbb E\left|\int_{-\infty}^{t}\int_{|x|_U\geq1}T(t-\tau)G(\tau,x)N({\rm d }\tau,{\rm d }x)\right|^2\\
&\leq\max_{|t|\leq k}2K^2\int_{-\infty}^{t}\int_{|x|_U\geq1}{\rm e}^{-2\omega(t-\tau)}\mathbb{E}|G(\tau,x)|^2\nu({\rm d }x){\rm d }\tau \nonumber\\
&\quad+\max_{|t|\leq k}2K^2\int_{-\infty}^{t}\int_{|x|_U\geq1}{\rm e}^{-\omega(t-\tau)}\nu({\rm d }x){\rm d }\tau
\cdot\max_{|t|\leq k}\int_{-\infty}^{t}\int_{|x|_U\geq1}{\rm e}^{-\omega(t-\tau)}
\mathbb{E}|G(\tau,x)|^2\nu({\rm d }x){\rm d }\tau \nonumber\\
&\leq\left(\frac{K^2}{\omega}+\frac{2K^2b}{\omega^2}\right)
\max_{|\tau|\leq l}\int_{|x|_U\geq1}\mathbb E|G(\tau,x)|^2\nu({\rm d }x)
+\left(\frac{K^2}{\omega}{\rm e}^{-2\omega(l-k)}
+\frac{2K^2b}{\omega^2}{\rm e}^{-\omega(l-k)}\right)\|G\|_\infty^2.\nonumber
\end{align}
According to (\ref{3.5}) and (\ref{third eleven})-(\ref{third fourteen}), we obtain
\begin{align}\label{itemthird}
&\max_{|t|\leq k}\mathbb E|\varphi(t)|^2 \\
&\quad\leq\frac{2K^2}{\omega ^2}
\bigg{(}2\max\limits_{|\tau|\leq l}\mathbb E{|f(\tau)|^2}
+\omega\max\limits_{|\tau|\leq l} \mathbb E{\|g(\tau)\mathcal Q^\frac{1}{2}\|^2_ {L_2(U,\mathbb{H})}} \nonumber\\
&\quad\qquad+\omega\max_{|\tau|\leq l}\int_{|x|_U<1}\mathbb E|F(\tau,x)|^2\nu({\rm d }x)
+(2\omega+4b)\max_{|\tau|\leq l}\int_{|x|_U\geq1}\mathbb E|G(\tau,x)|^2\nu({\rm d }x)\bigg) \nonumber\\
&\qquad+\frac{2K^2}{\omega^2}\bigg(2{\rm e}^{-\omega(l-k)}\|f\|_\infty^2+\omega {\rm e}^{-2\omega(l-k)}
\|g\mathcal Q^\frac{1}{2}\|_\infty^2
+\omega {\rm e}^{-2\omega(l-k)}\|F\|_\infty^2 \nonumber \\
&\quad\qquad+\left(2\omega {\rm e}^{-2\omega(l-k)}+4b{\rm e}^{-\omega(l-k)}\right)\|G\|_\infty^2\bigg). \nonumber
\end{align}

Let $\{t_n\}\in\mathfrak M_{(f,g,F,G)}$.
Then there exists $(\widetilde f,\widetilde g,\widetilde F,\widetilde G)\in H(f,g,F,G)$ such that for given $k>0$
\begin{equation}\label{third sixteen}
\lim_{n\rightarrow \infty}\max_{|t|\leq k}\mathbb{E}|f(t+t_n)-\widetilde f(t)|^2=0,
\end{equation}
\begin{equation}\label{third sixteen2}
\lim_{n\rightarrow \infty}\max_{|t|\leq k}\mathbb{E}\|(g(t+t_n)-\widetilde g(t))\mathcal Q^\frac{1}{2}\|^2_{L_2(U,\mathbb{H})}=0,
\end{equation}
\begin{equation}\label{third sixteen3}
\lim_{n\rightarrow \infty}\max_{|t|\leq k}\int_{|x|_U<1}\mathbb{E}|F(t+t_n,x)-\widetilde F(t,x)|^2\nu({\rm d }x)=0,
\end{equation}
\begin{equation}\label{third sixteen4}
\lim_{n\rightarrow \infty}\max_{|t|\leq k}\int_{|x|_U\geq1}\mathbb{E}|G(t+t_n,x)-\widetilde G(t,x)|^2\nu({\rm d }x)=0.
\end{equation}
Let $\varphi_n$ denote the unique $\mathcal {L}^2$-bounded solution of the equation
\begin{align*}
{\rm d }Y(t)=&(AY(t)+f^{t_n}(t)){\rm d }t+g^{t_n}(t){\rm d }W(t)+\int_{|x|_U<1}F^{t_n}(t,x)\widetilde N({\rm d }t,{\rm d }x) \\
&+\int_{|x|_U\geq1}G^{t_n}(t,x)N({\rm d }t,{\rm d }x) \qquad  (n\in \mathbb{N}, t\in\mathbb{R}).
\end{align*}
Let $\widetilde\varphi$ denote the unique $\mathcal {L}^2$-bounded solution of the equation
\begin{align*}
{\rm d }Y(t)=&(AY(t)+\widetilde f(t)){\rm d }t+\widetilde g(t){\rm d }W(t)+\int_{|x|_U<1}\widetilde F(t,x)\widetilde N({\rm d }t,{\rm d }x)   \\
&+\int_{|x|_U\geq1}\widetilde G(t,x)N({\rm d }t,{\rm d }x)\qquad  (t\in\mathbb{R}).
\end{align*}
Then $\psi_n:= \varphi_n-\widetilde\varphi$ is the solution of the following equation
\begin{align*}
{\rm d }Y(t)=&(AY(t)+(f^{t_n}(t)-\widetilde f(t))){\rm d }t+(g^{t_n}(t)-\widetilde g(t)){\rm d }W(t) \\
&+\int_{|x|_U<1}(F^{t_n}(t,x)-\widetilde F(t,x))\widetilde N({\rm d }t,{\rm d }x)\\
&+\int_{|x|_U\geq1}(G^{t_n}(t,x)-\widetilde G(t,x))N({\rm d }t,{\rm d }x) \qquad  (n\in \mathbb{N}, t\in\mathbb{R}),
\end{align*}
where $f^{t_n}-\widetilde f\in C_b(\mathbb R,\mathcal{L}^2(\mathbf{P};\mathbb{H}))$,
$g^{t_n}-\widetilde g\in C_b(\mathbb R,\mathcal L^2(\mathbf P;L(U,\mathbb H)))$,
$F^{t_n}-\widetilde F, G^{t_n}-\widetilde G\in  C_b(\mathbb R,\mathcal L^2(\mathbf P_\nu;\mathbb H))$.
By Remark \ref{rem hull}, we have
\begin{equation*}
\|f^{t_n}-\widetilde f\|_\infty\leq 2\|f\|_\infty,\quad \|(g^{t_n}-\widetilde g)\mathcal Q^{\frac{1}{2}}\|_\infty
\leq 2\|g\mathcal Q^{\frac{1}{2}}\|_\infty,
\end{equation*}
\begin{equation*}
\|F^{t_n}-\widetilde F\|_\infty\leq 2\|F\|_\infty, \quad \|G^{t_n}-\widetilde G\|_\infty\leq 2\|G\|_\infty.
\end{equation*}
Now let ${l_n}$ be a sequence of positive numbers such that ${l_n}\to\infty$ as $n\to\infty$. On the basis of (\ref{itemthird}), we get
\begin{align}\label{third seventeen}
&\max_{|t|\leq k}\mathbb E|\psi_n(t)|^2 \\
&\quad\leq \frac{2K^2}{\omega^2}\bigg{(}2\max_{|\tau|\leq l_n}\mathbb E|f^{t_n}(\tau)-\widetilde f(\tau)|^2
+\omega\max_{|\tau|\leq l_n}\mathbb E\|(g^{t_n}(\tau)-\widetilde g(\tau))\mathcal Q^\frac{1}{2}\|^2_{L_2(U,\mathbb{H})} \nonumber\\
&\quad\qquad +\omega\max_{|\tau|\leq l_n}\int_{|x|_U<1}\mathbb E|F^{t_n}(\tau,x)-\widetilde F(\tau,x)|^2\nu({\rm d }x)\nonumber\\
&\quad\qquad +(2\omega+4b)\max_{|\tau|\leq l_n}\int_{|x|_U\geq1}\mathbb E|G^{t_n}(\tau,x)-\widetilde G(\tau,x)|^2
\nu({\rm d }x)\bigg{)} \nonumber \\
&\qquad+\frac{2K^2}{\omega^2}\bigg{(}2{\rm e}^{-\omega(l_n-k)}\|f^{t_n}-\widetilde f\|_\infty^2
+\omega {\rm e}^{-2\omega(l_n-k)}\|(g^{t_n}-\widetilde g)\mathcal Q^{\frac{1}{2}}\|_\infty^2   \nonumber\\
&\quad\qquad+\omega {\rm e}^{-2\omega(l_n-k)}\|F^{t_n}-\widetilde F\|_\infty^2+\left(2\omega {\rm e}^{-2\omega(l_n-k)}
+4b{\rm e}^{-\omega(l_n-k)}\right)\|G^{t_n}-\widetilde G\|_\infty^2\bigg{)}.\nonumber
\end{align}
Letting $n\to\infty$ in (\ref{third seventeen}) and considering Remark \ref{remD1}-2-(iii), by \eqref{third sixteen}-\eqref{third sixteen4}
we have
\begin{align*}
\lim_{n\to\infty}\max_{|t|\leq k}\mathbb E|\psi_n(t)|^2=0
\end{align*}
for any $k>0$; that is, $\varphi_n\to\widetilde\varphi$ in $C(\mathbb R,\mathcal{L}^2(\mathbf{P};\mathbb{H}))$ as $n\to\infty$.
Since $\mathcal{L}^2$ convergence implies convergence in distribution, we have $\varphi_n\to\widetilde\varphi$
in distribution uniformly in $t\in[-k,k]$ for all $k>0$. Let $\tau=\sigma+t_n$ in (\ref{SDEA1}). Then
\begin{align*}
\varphi(t+t_n)=&\int_{-\infty}^{t}T(t-\sigma)f(\sigma+t_n){\rm d }\sigma+\int_{-\infty}^{t}T(t-\sigma)g(\sigma+t_n){\rm d }W_n(\sigma) \\
&+\int_{-\infty}^{t}\int_{|x|_U<1}T(t-\sigma)F(\sigma+t_n,x)\widetilde{N}_n({\rm d }\sigma,{\rm d}x)\\
&+\int_{-\infty}^{t}\int_{|x|_U\geq1}T(t-\sigma)G(\tau+t_n,x)N_n({\rm d }\sigma,{\rm d}x)
\end{align*}
where $W_n(\sigma):=W(\sigma+t_n)-W(t_n)$, $N_n(\sigma,x):=N(\sigma+t_n,x)-N(t_n,x)$ and
$\widetilde {N}_n(\sigma,x):=\widetilde {N}(\sigma+t_n,x)-\widetilde {N}(t_n,x)$ for each
$\sigma\in\mathbb R$. Since $W_n$ is a $\mathcal Q$-Wiener process with the same law as
$W$ and $N_n$ has the same law as $N$ with compensated Poisson random measure $\widetilde{N}_n$,
$\varphi_n(t)$ and $\varphi(t+t_n)$ share the same distribution on $\mathbb H$.
Hence $\varphi(t+t_n)\to\widetilde\varphi(t)$ in distribution uniformly in $t\in[-k,k]$ for all $k>0$.
Thus we have $\{t_n\}\in \mathfrak{\tilde{M}}_{\varphi}$. That is, $\varphi$ is strongly compatible in distribution.

{\bf Step 3. $\mathfrak M_{(f,g,F,G)}^{u}\subseteq \mathfrak {\tilde{M}}_{\varphi}^{u}$.}
Let$\{t_n\}\in\mathfrak{M}_{(f,g,F,G)}^{u}$. Then there exists $(\widetilde f,\widetilde g,\widetilde F,\widetilde G)\in H(f,g,F,G)$ such that
\begin{equation}\label{third eighteen}
\lim_{n\rightarrow \infty}\sup_{t\in\mathbb R}\mathbb{E}|f(t+t_n)-\widetilde f(t)|^2=0,
\end{equation}
\begin{equation}\label{third eighteen2}
\lim_{n\rightarrow \infty}\sup_{t\in\mathbb R}\mathbb{E}\|(g(t+t_n)-\widetilde g(t))\mathcal Q^\frac{1}{2}\|^2_{L_2(U,\mathbb{H})}=0,
\end{equation}
\begin{equation}\label{third eighteen3}
\lim_{n\rightarrow \infty}\sup_{t\in\mathbb R}\int_{|x|_U<1}\mathbb{E}|F(t+t_n,x)-\widetilde F(t,x)|^2\nu({\rm d }x)=0,
\end{equation}
\begin{equation}\label{third eighteen4}
\lim_{n\rightarrow \infty}\sup_{t\in\mathbb R}\int_{|x|_U\geq1}\mathbb{E}|G(t+t_n,x)-\widetilde G(t,x)|^2\nu({\rm d }x)=0.
\end{equation}
We define $\varphi_n$, $\widetilde\varphi$ and $\psi_n$ as in Step 2. According to (\ref{item22}), we obtain
\begin{align}\label{third nineteen}
\|\psi_n\|_\infty^2 \leq&\frac{K^2}{\omega^2}\Big(4\|f^{t_n}-\widetilde f\|_{\infty}^2
+2\omega\|(g^{t_n}-\widetilde g)\mathcal Q^\frac{1}{2}\|_{\infty}^2 \\
&\quad+2\omega\|F^{t_n}-\widetilde F\|_\infty^2+(4\omega+8b)\|G^{t_n}-\widetilde G\|_\infty^2\Big). \nonumber
\end{align}
Passing to limit in (\ref{third nineteen}), by \eqref{third eighteen}-\eqref{third eighteen4} we get
$\varphi_n\to\widetilde\varphi $ uniformly on $\mathbb R$ in $\mathcal L^2$-norm as $n\to\infty$.
Since $\varphi_n(t)$ and $\varphi(t+t_n)$ have the same distributions, $\varphi(t+t_n)\to\widetilde\varphi(t)$
in distribution uniformly in $t\in\mathbb R$. Thus we have $\{t_n\}\in \mathfrak {\tilde{M}}_{\varphi}^{u}$.

The proof is complete.
\end{proof}

\begin{coro}\label{co third one}

Consider \eqref{SlSDE}. Under the assumptions of \textsl{Theorem} {\rm \ref{thmone}}, it follows from
\textsl{Theorems} {\rm \ref{th1}} and {\rm\ref{thmone}} that
\begin{enumerate}
\item
if coefficients $f,g,F,G$ are jointly stationary (respectively,
$\tau$-periodic, quasi-periodic with the spectrum of frequencies $\nu_1,\ldots,\nu_m$,
almost periodic, almost automorphic, Birkhoff recurrent, Lagrange stable,
Levitan almost periodic, almost recurrent, Poisson stable), then the unique $\mathcal L^2$-bounded solution of {\rm (\ref{SlSDE})} has the same
recurrent property as coefficients in distribution;

\item
if coefficients $f,g,F,G$ are jointly Lagrange stable and jointly pseudo-periodic (or
pseudo-recurrent), then the unique $\mathcal L^2$-bounded solution of {\rm (\ref{SlSDE})} is
pseudo-periodic (or pseudo-recurrent) in distribution.

\end{enumerate}
\end{coro}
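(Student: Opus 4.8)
The plan is to transfer the recurrence of the coefficients to the law of the solution by viewing the latter as a curve in the space of probability measures and then invoking Shcherbakov's comparability theorem. Concretely, I would associate to the unique $\mathcal L^2$-bounded solution $\varphi$ the map $\Phi\in C(\mathbb R,\mathcal P(\mathbb H))$ defined by $\Phi(t):=\mathrm{Law}(\varphi(t))$. Since $\varphi$ is $\mathcal L^2$-continuous (Theorem \ref{thmone}) and $\mathcal L^2$-convergence implies convergence in distribution, $\Phi$ is indeed continuous into $(\mathcal P(\mathbb H),\beta)$. Because $\beta$ metrizes weak convergence and the metric on $C(\mathbb R,\mathcal P(\mathbb H))$ generates the compact-open topology (Remark \ref{remD1}), convergence of $\Phi^{t_n}$ in $C(\mathbb R,\mathcal P(\mathbb H))$ is exactly convergence of $\{\varphi(\cdot+t_n)\}$ in distribution uniformly on compact intervals. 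Hence $\tilde{\mathfrak N}_\varphi=\mathfrak N_\Phi$ and $\tilde{\mathfrak M}_\varphi=\mathfrak M_\Phi$, and likewise for the uniform versions $\tilde{\mathfrak N}_\varphi^{u}=\mathfrak N_\Phi^{u}$, $\tilde{\mathfrak M}_\varphi^{u}=\mathfrak M_\Phi^{u}$.

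With this identification the corollary is immediate. Theorem \ref{thmone} gives $\mathfrak M_{(f,g,F,G)}\subseteq\tilde{\mathfrak M}_\varphi=\mathfrak M_\Phi$, i.e. $\Phi$ is strongly comparable (by character of recurrence) with the joint coefficient function $(f,g,F,G)$; by Theorem \ref{th1}(1) this also yields $\mathfrak N_{(f,g,F,G)}\subseteq\mathfrak N_\Phi$, so $\Phi$ is comparable with $(f,g,F,G)$ as well. I would then split into the three groups dictated by Theorem \ref{th1}. If $(f,g,F,G)$ is jointly stationary, $\tau$-periodic, Levitan almost periodic, almost recurrent, or Poisson stable, then comparability together with Theorem \ref{th1}(3) forces $\Phi$ to inherit the same property. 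If $(f,g,F,G)$ is jointly quasi-periodic with spectrum $\nu_1,\dots,\nu_m$, almost periodic, almost automorphic, Birkhoff recurrent, or Lagrange stable, then strong comparability together with Theorem \ref{th1}(4) gives the corresponding property for $\Phi$. In every case, $\Phi$ possessing the property is by definition the statement that $\varphi$ has that recurrence property in distribution, which proves item (i).

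For item (ii) the hypotheses of Theorem \ref{th1}(5) are met: $\Phi$ is strongly comparable with $(f,g,F,G)$ and, by assumption, $(f,g,F,G)$ is jointly Lagrange stable; hence if $(f,g,F,G)$ is in addition jointly pseudo-periodic (respectively pseudo-recurrent), Theorem \ref{th1}(5) shows $\Phi$, and therefore $\varphi$ in distribution, is pseudo-periodic (respectively pseudo-recurrent). The only point requiring care, and the closest thing to an obstacle, is the first paragraph: one must verify that $\Phi$ is a bona fide continuous curve in $(\mathcal P(\mathbb H),\beta)$ and that the two notions of convergence, namely weak convergence uniformly on compacts and convergence in the compact-open topology on $C(\mathbb R,\mathcal P(\mathbb H))$, genuinely coincide, so that the sequence classes $\tilde{\mathfrak N}_\varphi,\tilde{\mathfrak M}_\varphi$ may legitimately be read as the Shcherbakov classes $\mathfrak N_\Phi,\mathfrak M_\Phi$ of a single function. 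Once this translation is in place, the conclusion is a direct quotation of Theorems \ref{thmone} and \ref{th1}.
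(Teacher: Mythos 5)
Your proposal is correct and is precisely the argument the paper intends: the corollary is stated as an immediate consequence of Theorems \ref{th1} and \ref{thmone}, with the (implicit) identification of $\tilde{\mathfrak N}_\varphi,\tilde{\mathfrak M}_\varphi$ with the Shcherbakov classes $\mathfrak N_\Phi,\mathfrak M_\Phi$ of the law curve $\Phi(t)=\mathrm{Law}(\varphi(t))$ in $C(\mathbb R,(\mathcal P(\mathbb H),\beta))$, exactly as you spell out. Your first paragraph simply makes explicit the translation step that the paper leaves to the reader.
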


\section{Semilinear equations}
Define
\[
\mathcal L^2( \nu;\mathbb H):= \mathcal L^2(U,\mathcal B_U, \nu; \mathbb H)=\bigg\{Y:U\to \mathbb H  \bigg| \int_U|Y|^2\nu({\rm d }x)<\infty \bigg\},
\]
with $\mathcal B_U$ being the Borel $\sigma$-algebra on $U$.
For $Y\in\mathcal L^2(\nu;\mathbb H)$, let
\[
\|Y\|_{\mathcal L^2(\nu;\mathbb H)}:=\left(\int_U|Y|^2\nu({\rm d }x)\right)^\frac{1}{2}.
\]
We use $C(\mathbb R\times\mathcal Y,\mathcal X)$ to denote the space of all continuous functions
$f:\mathbb R\times \mathcal Y\to\mathcal X$, where $(\mathcal Y, \rho)$ and $(\mathcal X, \gamma)$ are complete metric spaces.

Let us consider the semilinear SDE driven by L\'evy noise
\begin{align}\label{SemiSDE}
{\rm d }Y(t)=&(AY(t)+f(t,Y(t))){\rm d }t+g(t,Y(t)){\rm d }W(t) \\
&+\int_{|x|_U<1}F(t,Y(t-),x)\widetilde{N}({\rm d }t,{\rm d }x)
+\int_{|x|_U\geq1}G(t,Y(t-),x)N({\rm d }t,{\rm d }x), \nonumber
\end{align}
where $A$ is an infinitesimal generator of a dissipative $\mathcal C^0$-semigroup $\{T(t)\}_{t\geq0}$ on $\mathbb{H}$ such that
\begin{equation}\label{expi}
\left\|T(t)\right\|\leq K {\rm e}^{-\omega t}, \qquad \mbox {for all}~~~  t\geq 0,
\end{equation}
with $K>0$, $\omega>0$; $f\in C(\mathbb{R}\times\mathbb H,\mathbb H)$,
$g\in C(\mathbb{R}\times\mathbb H,L(U,\mathbb H))$, $F,G\in C(\mathbb{R}\times\mathbb H,\mathcal L^2(\nu;\mathbb H))$;
$W$ and $N$ are the L\'evy-It\^o decomposition components of the two-sided L\'evy process $L$ with assumptions stated
in Section \ref{levy}. We impose the following conditions on $f, g, F, G$.
\begin{enumerate}
\item[(E1)]
There exists a number $A_0\geq 0$ such that for all $t\in\mathbb R$ \\
 $|f(t,0)|\leq {A_0}$, \qquad $\|g(t,0)\mathcal Q^\frac{1}{2}\|_{L_2(U,\mathbb H)}\leq {A_0}$, \\
$\int_{|x|_U<1}|F(t,0,x)|^2\nu({\rm d }x)\leq {A_0}^2$, \qquad $\int_{|x|_U\geq1}|G(t,0,x)|^2\nu({\rm d }x)\leq {A_0}^2$.

\item[(E1$'$)]
There exists a number $A_0\geq 0$ such that for some constant $ p>2$ and all $t\in\mathbb R$ \\
 $|f(t,0)|\leq {A_0}$, \qquad $\|g(t,0)\mathcal Q^\frac{1}{2}\|_{L_2(U,\mathbb H)}\leq {A_0}$, \\
$\int_{|x|_U<1}|F(t,0,x)|^p\nu({\rm d }x)\leq {A_0}^p$, \qquad $\int_{|x|_U\geq1}|G(t,0,x)|^p\nu({\rm d }x)\leq {A_0}^p$.

\item[(E2)]
There exists a number $\mathcal L\geq 0$ such that for all $t\in\mathbb R$ and $Y_1, Y_2 \in \mathbb H$\\
$|f(t,Y_1)-f(t,Y_2)|\leq\mathcal L|Y_1-Y_2|$,\qquad $\|(g(t,Y_1)-g(t,Y_2))\mathcal Q^\frac{1}{2}\|_{L_2(U,\mathbb H)}\leq\mathcal L|Y_1-Y_2|$, \\
$\int_{|x|_U<1}|F(t,Y_1,x)-F(t,Y_2,x)|^2\nu({\rm d }x)\leq\mathcal L^2|Y_1-Y_2|^2$, \\
$\int_{|x|_U\geq1}|G(t,Y_1,x)-G(t,Y_2,x)|^2\nu({\rm d }x)\leq\mathcal L^2|Y_1-Y_2|^2$.

\item[(E2$'$)]
There exists a number $\mathcal L\geq 0$ such that for some constant $ p>2$ and $t\in\mathbb R$, $Y_1, Y_2 \in \mathbb H$\\
$|f(t,Y_1)-f(t,Y_2)|\leq\mathcal L|Y_1-Y_2|$,\qquad $\|(g(t,Y_1)-g(t,Y_2))\mathcal Q^\frac{1}{2}\|_{L_2(U,\mathbb H)}\leq\mathcal L|Y_1-Y_2|$, \\
$\int_{|x|_U<1}|F(t,Y_1,x)-F(t,Y_2,x)|^p\nu({\rm d }x)\leq\mathcal L^p|Y_1-Y_2|^p$, \\
$\int_{|x|_U\geq1}|G(t,Y_1,x)-G(t,Y_2,x)|^p\nu({\rm d }x)\leq\mathcal L^p|Y_1-Y_2|^p$.

\item[(E3)]
 $f, g, F, G$ are continuous in $t$ uniformly w.r.t. $Y$ on each bounded subset $Q\subset\mathbb H $.
\end{enumerate}

\begin{remark}\label{H(f,g,F,G)}\rm
\begin{enumerate}
\item If $f,g,F,G$ satisfy
{\rm(E1)}, {\rm(E2)}, {\rm(E3)}, then $f\in BUC(\mathbb R\times \mathbb H, \mathbb H)$,
$g\in BUC(\mathbb R\times \mathbb H, L(U,\mathbb H))$, $F,G\in BUC(\mathbb{R}\times\mathbb H,\mathcal L^2(\nu;\mathbb H))$ and
$H(f,g,F,G)\subset BUC (\mathbb R\times \mathbb H, \mathbb H) \times BUC (\mathbb R\times \mathbb H, L(U,\mathbb H))
\times BUC(\mathbb{R}\times\mathbb H,\mathcal L^2(\nu;\mathbb H))\times BUC(\mathbb{R}\times\mathbb H,\mathcal L^2(\nu;\mathbb H))$.

\item If $f,g,F,G$ satisfy {\rm(E1)} and {\rm(E2)} (or {\rm(E1$'$)} and {\rm(E2$'$)}) with the constants $A_0$
and $\mathcal L$, then every quadruplet
$(\widetilde {f},\widetilde {g},\tilde{F},\tilde{G})\in H(f,g,F,G)
:= \overline{\{(f^{\tau},g^{\tau},F^{\tau},G^{\tau}): \tau\in \mathbb R\}}$, the hull of $(f,g,F,G)$,
also possesses the same properties with the same constants.

\item  Note that {\rm(E3)} trivially holds for stochastic ordinary differential equations.
\end{enumerate}
\end{remark}

\begin{remark}\label{BDG}\rm
\begin{enumerate}
\item
{\rm(\cite[Inequality (2.24)]{K})}. Let $p>0$. Assume that $\{g(s)\}_{s\in[t_0,t]}$ is a $L(U,\mathbb H)$-valued process
on $(\Omega, \mathcal F,\mathbf P )$ with the normal filtration $\mathcal F_s$, $s\in[t_0,t]$;
$W$ is a $U$-valued $\mathcal Q$-Wiener process, where $\mathcal Q\in L(U)$ is nonnegative, symmetric and
with finite trace. Then there exists a constant $c_p>0$, depending only on $p$, such that
\begin{equation}\label{cp}
\mathbb E\sup_{s\in[t_0,t]} \left|\int_{t_0}^s g(\tau) {\rm d }W(\tau) \right|^p
\le c_p \left( \mathbb E\int_{t_0}^t \|g(\tau)\mathcal Q^{\frac{1}{2}}\|_{L_2(U,\mathbb H)}^2  {\rm d }\tau \right)^{p/2},
\end{equation}
where
\begin{equation*}
c_p:=\bigg[\frac{p(p-1)}{2}\cdot\left(\frac{p}{p-1}\right)^{p-2}\bigg]^\frac{p}{2}.
\end{equation*}

\item
By the proof of  {\cite[Theorem 2.11]{K}} and {\cite[Theorem 4.4.23]{A}} (the finite dimension case of  Kunita's first inequality),
we have the following conclusion (infinite dimension case).
Let $p\geq 2$ and $(Z, \mathcal Z, m)$ be a measurable space.
Let $N$ be a Poisson measure on $[t_0,t]\times Z$ with intensity measure $m$ and compensated measure $\widetilde N$.
Assume that $F:\Omega\times[t_0,t]\times Z\to \mathbb H$ is a predictable process
that it is $\mathcal P\times \mathcal Z$-measurable, where $\mathcal P$ is the $\sigma$-algebra on $\Omega\times[t_0,t]$
generated by left continuous adapted processes.
Then there exists a constant $d_p>0$, depending only on $p$, such that
\begin{align}\label{kunita}
\mathbb E&\sup_{s\in[t_0,t]}\left|\int_{t_0}^s\int_ZF(\tau,x)\widetilde N({\rm d }\tau,{\rm d }x)\right|^p \\
&\leq \cfrac{2^{p-3}\left(\frac{p}{p-1}\right)^p\alpha^{2-\frac{p}{2}}}{1-(p-1)(p-2)2^{p-4}\left(\frac{p}{p-1}\right)^p\alpha^{2-p}}
\mathbb E\left(\int_{t_0}^t\int_Z|F(\tau,x)|^2 m({\rm d }x){\rm d }\tau\right)^{\frac{p}{2}} \nonumber \\
&\quad +\cfrac{p(p-1)2^{p-4}\left(\frac{p}{p-1}\right)^p}{1-(p-1)(p-2)2^{p-4}\left(\frac{p}{p-1}\right)^p\alpha^{2-p}}\mathbb E\int_{t_0}^t\int_Z
|F(\tau,x)|^p m({\rm d }x){\rm d }\tau \nonumber\\
&=:D_1(p)\mathbb E\left(\int_{t_0}^t\int_Z|F(\tau,x)|^2 m({\rm d }x){\rm d }\tau\right)^{\frac{p}{2}}
+D_2(p)\mathbb E\int_{t_0}^t\int_Z|F(\tau,x)|^p m({\rm d }x){\rm d }\tau\nonumber\\
&\leq d_p\Bigg\{\mathbb E\left(\int_{t_0}^t\int_Z|F(\tau,x)|^2 m({\rm d }x){\rm d }\tau\right)^{\frac{p}{2}}
+\mathbb E\int_{t_0}^t\int_Z|F(\tau,x)|^p m({\rm d }x){\rm d }\tau \Bigg\}, \nonumber
\end{align}
where $d_p=\max\{D_1(p),D_2(p)\}$ is continuous w.r.t $p$ and $\alpha\geq 1$ is a constant only dependent on $p$,
which plays the role to ensure $1-(p-1)(p-2)2^{p-4}\left(\frac{p}{p-1}\right)^p\alpha^{2-p}>0$.
\end{enumerate}
\end{remark}

\begin{theorem}\label{prop4.1}
Consider {\rm (\ref{SemiSDE})}. Suppose that {\rm (\ref{expi})} holds and $f\in C(\mathbb{R}\times\mathbb H,\mathbb H)$,
$g\in C(\mathbb{R}\times\mathbb H,L(U,\mathbb H))$, $F,G\in C(\mathbb{R}\times\mathbb H,\mathcal L^2(\nu;\mathbb H))$.
Suppose further that $W$ as well as $N$ are still the same as in Section \ref{levy} and
$f,g,F,G$ satisfy {\rm{(E1$'$)}, {(E2$'$)}}. Then for the constant $p>2$ in {\rm{(E1$'$)} and {(E2$'$)}},
{\rm (\ref{SemiSDE})} has a unique solution in $C_b(\mathbb R,\mathcal L^p(\mathbf{P};\mathbb{H}))$ if
\begin{align*}
\theta_p:=&4^{p-1}K^p\mathcal L^p\Bigg\{\left((1+(2b)^{p-1})\bigg(\frac{2(p-1)}{\omega p}\bigg)^{p-1}
+\left(c_p+(1+2^{p-1})d_p\right)\bigg(\frac{p-2}{\omega p}\bigg)^{\frac{p}{2}-1}\right)\cdot\frac{2}{\omega p}\\
&\quad+(1+2^{p-1})d_p\cdot\frac{1}{\omega p}\Bigg\} <1,
\end{align*}
where $c_p$ and $d_p$ are defined as in \textsl {Remark} {\rm \ref{BDG}}.
\end{theorem}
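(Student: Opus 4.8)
The plan is to realize the unique bounded solution as the fixed point of the integral operator attached to the variation-of-constants formula, and to recognize the hypothesis $\theta_p<1$ as exactly the resulting contraction constant. On the Banach space $C_b(\mathbb R,\mathcal L^p(\mathbf P;\mathbb H))$, equipped with $\|\cdot\|_\infty$, I would define the operator $\mathbb B$ by
\begin{align*}
(\mathbb B Y)(t):=&\int_{-\infty}^t T(t-\tau)f(\tau,Y(\tau))\,{\rm d}\tau+\int_{-\infty}^t T(t-\tau)g(\tau,Y(\tau))\,{\rm d}W(\tau)\\
&+\int_{-\infty}^t\int_{|x|_U<1}T(t-\tau)F(\tau,Y(\tau-),x)\widetilde N({\rm d}\tau,{\rm d}x)\\
&+\int_{-\infty}^t\int_{|x|_U\geq1}T(t-\tau)G(\tau,Y(\tau-),x)N({\rm d}\tau,{\rm d}x),
\end{align*}
so that, exactly as in the derivation of \eqref{SDEA1} in Theorem \ref{thmone}, a process in $C_b(\mathbb R,\mathcal L^p(\mathbf P;\mathbb H))$ is a mild solution of \eqref{SemiSDE} if and only if it is a fixed point of $\mathbb B$; the exponential stability \eqref{expi} guarantees convergence of the improper integrals.

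Next I would show that $\mathbb B$ maps $C_b(\mathbb R,\mathcal L^p(\mathbf P;\mathbb H))$ into itself. For fixed $Y$ the composed coefficients $f(\cdot,Y(\cdot)),\dots$ are bounded on $\mathbb R$, their $\mathcal L^p$-norms being controlled by $A_0$ (via (E1$'$)) and $\mathcal L\|Y\|_\infty$ (via (E2$'$)). The $\mathcal L^p$-boundedness of $\mathbb B Y$ then follows from four estimates, one per term: the drift term by Hölder's inequality together with \eqref{expi}; the Wiener term by the Burkholder--Davis--Gundy inequality \eqref{cp} (constant $c_p$); the small-jump term by the Kunita-type inequality \eqref{kunita} (constant $d_p$), which contributes a quadratic and a $p$-th power term; and the large-jump term after writing $N=\widetilde N+{\rm d}\tau\,\nu({\rm d}x)$ and applying \eqref{kunita} to the compensated part and Hölder to the compensator part, the latter producing the factor $b$ from Remark \ref{rnu}. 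The $\mathcal L^p$-continuity of $\mathbb B Y$ is then checked as in Step 1 of Theorem \ref{thmone}, by splitting off $T(t-r)(\mathbb B Y)(r)-(\mathbb B Y)(r)$ and invoking strong continuity of the semigroup and dominated convergence.

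The core of the argument is the contraction estimate. For $Y_1,Y_2$ I would bound $\mathbb E|(\mathbb B Y_1)(t)-(\mathbb B Y_2)(t)|^p$ by first using $|a_1+a_2+a_3+a_4|^p\le 4^{p-1}\sum_i|a_i|^p$ (the source of the prefactor $4^{p-1}K^p$), then estimating each of the four differences by the same inequalities as above, now feeding in the Lipschitz bounds of (E2$'$). The decisive bookkeeping is the distribution of the exponential weight: writing $K{\rm e}^{-\omega(t-\tau)}={\rm e}^{-\frac{\omega}{2}(t-\tau)}\cdot K{\rm e}^{-\frac{\omega}{2}(t-\tau)}$ and applying Hölder with exponents $\tfrac{p}{p-1},p$ to the first-order terms produces the factors $\big(\tfrac{2(p-1)}{\omega p}\big)^{p-1}\tfrac{2}{\omega p}$, whereas the same split with exponents $\tfrac{p}{p-2},\tfrac p2$ applied to the quadratic BDG/Kunita terms produces $\big(\tfrac{p-2}{\omega p}\big)^{\frac p2-1}\tfrac{2}{\omega p}$, and the undifferentiated $p$-th power Kunita terms produce $\tfrac{1}{\omega p}$. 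Collecting the drift together with the large-jump compensator (whose $2^{p-1}$ from the $N$-splitting combines with $b^{p-1}$ to give $(2b)^{p-1}$), and collecting the Wiener term with the two compensated jump terms (the large-jump one again carrying a $2^{p-1}$), one obtains precisely $\mathbb E|(\mathbb B Y_1)(t)-(\mathbb B Y_2)(t)|^p\le\theta_p\|Y_1-Y_2\|_\infty^p$ uniformly in $t$. Thus $\|\mathbb B Y_1-\mathbb B Y_2\|_\infty\le\theta_p^{1/p}\|Y_1-Y_2\|_\infty$, and since $\theta_p<1$ the Banach fixed-point theorem furnishes a unique fixed point $\varphi\in C_b(\mathbb R,\mathcal L^p(\mathbf P;\mathbb H))$, the desired unique mild solution.

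I expect the main obstacle to lie in this final moment estimate, and specifically in the exact matching of constants: one must allot the decay \eqref{expi} correctly between the ``outer'' Hölder factor and the ``inner'' Lipschitz integral for each of the four terms, track the extra $2^{p-1}$ factors arising from the $N=\widetilde N+{\rm d}\tau\,\nu({\rm d}x)$ decomposition, and in particular handle the quadratic contribution of \eqref{kunita} on $\{|x|_U<1\}$ with care, reconciling the $L^2(\nu)$ norm that appears there with the $L^p(\nu)$ Lipschitz hypothesis (E2$'$). By comparison, verifying that $\mathbb B$ is well defined --- the $\mathcal L^p$-boundedness and especially the $\mathcal L^p$-continuity of $\mathbb B Y$ --- is routine, but it must be established before the contraction argument can be invoked.
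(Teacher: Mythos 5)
Your proposal follows essentially the same route as the paper: the authors define the operator $\mathcal S$ by the same variation-of-constants formula on $C_b(\mathbb R,\mathcal L^p(\mathbf P;\mathbb H))$, note that self-mapping is immediate from (E1$'$), (E2$'$) and \eqref{expi}, and then carry out exactly the four-term contraction estimate you describe (H\"older for the drift, \eqref{cp} for the Wiener term, \eqref{kunita} for the jump terms, and the $N=\widetilde N+\nu\,{\rm d}\tau$ splitting for the large jumps), with the same allocation of the exponential decay and the same bookkeeping yielding $\theta_p$. Your closing remark about reconciling the quadratic term of \eqref{kunita} with the $p$-th power hypothesis (E2$'$) is well taken — the paper's own estimate of $Z_3$, $Z_4$ silently invokes the quadratic Lipschitz bound of (E2), which is indeed available in the setting where the theorem is applied.
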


\begin{proof}
We note that $C_b(\mathbb R,\mathcal L^p(\mathbf{P};\mathbb{H}))$ is a complete metric space
and that $Y\in C_b(\mathbb R,\mathcal L^p(\mathbf{P};\mathbb{H}))$  is a mild solution of (\ref{SemiSDE})
if and only if it satisfies with the following integral equation
\begin{align*}
Y(t)=&\int_{-\infty}^{t}T(t-\tau)f(\tau,Y(\tau)){\rm d }\tau+\int_{-\infty}^{t}T(t-\tau)g(\tau,Y(\tau)){\rm d }W(\tau)\\
&+\int_{-\infty}^{t}\int_{|x|_U<1}T(t-\tau)F(\tau,Y(\tau-),x)\widetilde N({\rm d }\tau,{\rm d }x)\\
&+\int_{-\infty}^{t}\int_{|x|_U\geq1}T(t-\tau)G(\tau,Y(\tau-),x)N({\rm d }\tau,{\rm d }x).
\end{align*}
Consider the operator $\mathcal{S}$ acting on $C_b(\mathbb R,\mathcal L^p(\mathbf{P};\mathbb{H}))$ given by
\begin{align*}
(\mathcal{S}Y)(t):=&\int_{-\infty}^{t}T(t-\tau)f(\tau,Y(\tau)){\rm d }\tau+\int_{-\infty}^{t}T(t-\tau)g(\tau,Y(\tau)){\rm d }W(\tau)\\
&+\int_{-\infty}^{t}\int_{|x|_U<1}T(t-\tau)F(\tau,Y(\tau-),x)\widetilde N({\rm d }\tau,{\rm d }x)\\
&+\int_{-\infty}^{t}\int_{|x|_U\geq1}T(t-\tau)G(\tau,Y(\tau-),x)N({\rm d }\tau,{\rm d }x).
\end{align*}
Since  {\rm (\ref{expi})} holds and $f, g, F, G$ satisfy (E1$'$), (E2$'$), it is immediate to check
that $\mathcal S$ maps $C_b(\mathbb R,\mathcal L^p(\mathbf{P};\mathbb{H}))$ into itself.
Besides if $\mathcal S$ is a contraction mapping on $C_b(\mathbb R,\mathcal L^p(\mathbf{P};\mathbb{H}))$,
then by Banach fixed point theorem it concludes that there exists a unique $\mathcal L^p$-bounded
continuous mild solution to (\ref{SemiSDE}).
We now show that the operator $\mathcal{S}$ is contractive on $C_b(\mathbb R,\mathcal L^p(\mathbf{P};\mathbb{H}))$.
For $Y_1,Y_2\in C_b(\mathbb R,\mathcal L^p(\mathbf{P};\mathbb{H}))$ and $t\in\mathbb R$ we have
\begin{align*}
\mathbb{E}&|(\mathcal{S}Y_1)(t)-(\mathcal{S}Y_2)(t)|^p \\
&\leq4^{p-1}\bigg[\mathbb E\left|\int_{-\infty}^{t}T(t-\tau)(f(\tau,Y_1(\tau))-f(\tau,Y_2(\tau))){\rm d }\tau\right|^p \\
&\qquad+\mathbb E\left|\int_{-\infty}^{t}T(t-\tau)(g(\tau,Y_1(\tau))-g(\tau,Y_2(\tau))){\rm d }W(\tau)\right|^p \\
&\qquad+\mathbb E\bigg|\int_{-\infty}^{t}\int_{|x|_U<1}T(t-\tau)
(F(\tau,Y_1(\tau-),x)-F(\tau,Y_2(\tau-),x))\widetilde{N}({\rm d }\tau,{\rm d }x)\bigg|^p\\
&\qquad+\mathbb E\bigg|\int_{-\infty}^{t}\int_{|x|_U\geq1}T(t-\tau)
(G(\tau,Y_1(\tau-),x)-G(\tau,Y_2(\tau-),x))N({\rm d }\tau,{\rm d }x)\bigg|^p\bigg]\\
&=:4^{p-1}[Z_1(t)+Z_2(t)+Z_3(t)+Z_4(t)].
\end{align*}
The first term can be estimated by H\"older's inequality as follows
\begin{align*}
Z_1(t)&\leq\mathbb{E}\left(\int_{-\infty}^t\|T(t-\tau)\|\cdot|f(\tau,Y_1(\tau))-f(\tau,Y_2(\tau))|{\rm d }\tau\right)^p \\
&\leq \mathbb{E}\left(\int_{-\infty}^{t}K{\rm e}^{-\omega(t-\tau)}\mathcal L|Y_1(\tau)-Y_2(\tau)|{\rm d }\tau\right)^p \\
&\leq K^{p}\mathcal L^{p}\left(\int_{-\infty}^{t}{\rm e}^{\frac{-\omega(t-\tau)}{2}\cdot\frac{p}{p-1}}{\rm d }\tau\right)^{p-1}\cdot
\mathbb{E}\int_{-\infty}^t{\rm e}^{\frac{-\omega(t-\tau)}{2}\cdot p}|Y_1(\tau)-Y_2(\tau)|^{p}{\rm d }\tau \\
&\leq K^{p}\mathcal L^{p}\left(\frac{2(p-1)}{\omega p}\right)^{p-1}\cdot\int_{-\infty}^{t}{\rm e}^{\frac{-\omega p(t-\tau)}{2}}
\mathbb{E}|Y_1(\tau)-Y_2(\tau)|^{p}{\rm d }\tau.
\end{align*}
By (\ref{cp}) and H\"older's inequality, for the second term we have
\begin{align*}
Z_2(t)&\leq c_p\left(\mathbb E\int_{-\infty}^t\|T(t-\tau)(g(\tau,Y_1(\tau))-g(\tau,Y_2(\tau)))
\mathcal Q^\frac{1}{2}\|_{L_2(U,\mathbb H)}^2{\rm d}\tau\right)^{\frac{p}{2}} \\
&\leq K^{p}\mathcal L^{p}c_p\left(\mathbb E\int_{-\infty}^{t}{\rm e}^{-2\omega(t-\tau)}|Y_1(\tau)-Y_2(\tau)|^2{\rm d }\tau\right)^\frac{p}{2}\\
&\leq K^p\mathcal L^{p}c_p\left(\int_{-\infty}^t{\rm e}^{-\omega(t-\tau)\cdot\frac{p}{p-2}}{\rm d}\tau\right)^{\frac{p-2}{2}}\cdot
\mathbb E\int_{-\infty}^{t}{\rm e}^{-\omega(t-\tau)\cdot\frac{p}{2}}|Y_1(\tau)-Y_2(\tau)|^{p}{\rm d }\tau\ \\
&\leq K^{p}\mathcal{L}^{p}c_p\left(\frac{p-2}{\omega p}\right)^{\frac{p}{2}-1}\cdot\int_{-\infty}^{t}
{\rm e}^{\frac{-\omega p(t-\tau)}{2}}\mathbb E|Y_1(\tau)-Y_2(\tau)|^{p}{\rm d }\tau.
\end{align*}
As to the third and the forth terms, thanks to Kunita's first inequality (\ref{kunita}) and H\"older's inequality,
we get
\begin{align*}
Z_3(t)
\leq&d_p\Bigg\{\mathbb E\left(\int_{-\infty}^t\int_{|x|_U<1}|T(t-\tau)(F(\tau,Y_1(\tau-),x)-
F(\tau,Y_2(\tau-),x))|^2\nu({\rm d }x){\rm d }\tau\right)^\frac{p}{2}\\
&\quad+\mathbb E\int_{-\infty}^t\int_{|x|_U<1}|T(t-\tau)(F(\tau,Y_1(\tau-),x)-F(\tau,Y_2(\tau-),x))|^p\nu({\rm d }x){\rm d }\tau\Bigg\}\\
\leq&d_pK^p\mathcal L^p\Bigg\{\mathbb E\bigg(\int_{-\infty}^t{\rm e}^{-2\omega(t-\tau)}|Y_1(\tau)-Y_2(\tau)|^2{\rm d }\tau\bigg)^\frac{p}{2}
+\mathbb E\int_{-\infty}^t{\rm e}^{-\omega p(t-\tau)}|Y_1(\tau)-Y_2(\tau)|^p{\rm d }\tau\Bigg\} \\
\leq&d_pK^p\mathcal L^p\Bigg\{\bigg(\int_{-\infty}^t{\rm e}^{-\omega(t-\tau)\cdot\frac{p}{p-2}}{\rm d }\tau\bigg)^\frac{p-2}{2}
\cdot\mathbb E\int_{-\infty}^t{\rm e}^{-\omega(t-\tau)\cdot\frac{p}{2}}|Y_1(\tau)-Y_2(\tau)|^p{\rm d }\tau\\
&\quad+\mathbb E\int_{-\infty}^t{\rm e}^{-\omega p(t-\tau)}|Y_1(\tau)-Y_2(\tau)|^p{\rm d }\tau\Bigg\}\\
\leq&d_pK^p\mathcal L^p\Bigg\{\bigg(\frac{p-2}{\omega p}\bigg)^\frac{p-2}{2}\cdot\int_{-\infty}^t
{\rm e}^\frac{-\omega p(t-\tau)}{2}\mathbb E|Y_1(\tau)-Y_2(\tau)|^p{\rm d }\tau\\
&\quad+\int_{-\infty}^t{\rm e}^{-\omega p(t-\tau)}\mathbb E|Y_1(\tau)-Y_2(\tau)|^p{\rm d }\tau\Bigg\}
\end{align*}
and
\begin{align*}
Z_4(t)\leq&2^{p-1}\mathbb E\bigg|\int_{-\infty}^{t}\int_{|x|_U\geq1}T(t-\tau)(G(\tau,Y_1(\tau-),x)-G(\tau,Y_2(\tau-),x))
\widetilde{N}({\rm d }\tau,{\rm d }x)\bigg|^p \\
&+2^{p-1}\mathbb E\bigg|\int_{-\infty}^{t}\int_{|x|_U\geq1}T(t-\tau)(G(\tau,Y_1(\tau-),x)-G(\tau,Y_2(\tau-),x))
\nu({\rm d }x){\rm d }\tau\bigg|^p\\
\leq&2^{p-1}d_p\Bigg\{\mathbb E\left(\int_{-\infty}^t\int_{|x|_U\geq1}|T(t-\tau)(G(\tau,Y_1(\tau-),x)
-G(\tau,Y_2(\tau-),x))|^2\nu({\rm d }x){\rm d }\tau\right)^{\frac{p}{2}} \\
&\quad+\mathbb E\int_{-\infty}^t\int_{|x|_U\geq1}|T(t-\tau)(G(\tau,Y_1(\tau-),x)
-G(\tau,Y_2(\tau-),x))|^p\nu({\rm d }x){\rm d }\tau\Bigg\} \\
&+2^{p-1}\mathbb E\bigg|\int_{-\infty}^t\int_{|x|_U\geq1}T(t-\tau)(G(\tau,Y_1(\tau-),x)
-G(\tau,Y_2(\tau-),x))\nu({\rm d }x){\rm d }\tau\bigg|^p \\
\leq&2^{p-1}d_pK^p\mathcal L^p\Bigg\{\bigg(\frac{p-2}{\omega p}\bigg)^\frac{p-2}{2}\cdot\int_{-\infty}^t
{\rm e}^\frac{-\omega p(t-\tau)}{2}\mathbb E|Y_1(\tau)-Y_2(\tau)|^p{\rm d }\tau \\
&\quad+\int_{-\infty}^t{\rm e}^{-\omega p(t-\tau)}\mathbb E|Y_1(\tau)-Y_2(\tau)|^p{\rm d }\tau\Bigg\}\\
&+2^{p-1}K^p\mathcal L^p\bigg(\int_{-\infty}^t\int_{|x|_U\geq1}{\rm e}^{\frac{-\omega p(t-\tau)}{2(p-1)}}
\nu({\rm d }x){\rm d }\tau\bigg)^{p-1}\cdot \int_{-\infty}^t{\rm e}^{\frac{-\omega p(t-\tau)}{2}}
\mathbb E|Y_1(\tau)-Y_2(\tau)|^p{\rm d }\tau\\
\leq&2^{p-1}K^p\mathcal L^p\Bigg\{\bigg(d_p\bigg(\frac{p-2}{\omega p}\bigg)^\frac{p-2}{2}+
\bigg(\frac{2b(p-1)}{\omega p}\bigg)^{p-1}\bigg) \\
&\quad\cdot\int_{-\infty}^t{\rm e}^\frac{-\omega p(t-\tau)}{2}\mathbb E|Y_1(\tau)-Y_2(\tau)|^p{\rm d }\tau
+d_p\int_{-\infty}^{t}{\rm e}^{-\omega p(t-\tau)}\mathbb E|Y_1(\tau)-Y_2(\tau)|^p{\rm d }\tau\Bigg\}.
\end{align*}
Therefore, for each $t\in \mathbb R$,
\begin{align*}
\mathbb E&|(\mathcal{S}Y_1)(t)-(\mathcal{S}Y_2)(t)|^p \\
&\leq 4^{p-1}K^p\mathcal L^p\Bigg\{\left((1+(2b)^{p-1})\bigg(\frac{2(p-1)}{\omega p}\bigg)^{p-1}
+\left(c_p+(1+2^{p-1})d_p\right)\bigg(\frac{p-2}{\omega p}\bigg)^{\frac{p}{2}-1}\right)\cdot\frac{2}{\omega p}\\
&\qquad+(1+2^{p-1})d_p\cdot\frac{1}{\omega p}\Bigg\}
\cdot\sup_{\tau\in\mathbb R}\mathbb E|Y_1(\tau)-Y_2(\tau)|^p.
\end{align*}
That is
\begin{equation*}
\sup_{t\in\mathbb R}\mathbb E|(\mathcal{S}Y_1)(t)-(\mathcal{S}Y_2)(t)|^p\leq\theta_p\sup_{t\in\mathbb R}\mathbb E|Y_1(t)-Y_2(t)|^p.
\end{equation*}
It implies that
\begin{equation*}
\|\mathcal SY_1-\mathcal SY_2\|_\infty^p\leq\theta_p\|Y_1-Y_2\|_\infty^p.
\end{equation*}
Since $\theta_p<1$, the operator $\mathcal{S}$ is a contraction mapping on $C_b(\mathbb R,\mathcal L^p(\mathbf{P};\mathbb{H}))$.
Thus there exists a unique $\xi\in C_b(\mathbb R,\mathcal L^p(\mathbf{P};\mathbb{H}))$ satisfying
$\mathcal{S}\xi=\xi$, which is the unique $\mathcal{L}^p$-bounded
 solution of (\ref{SemiSDE}).
\end{proof}

\begin{remark}\label{RLp}\rm
Note that the contraction constant $\theta_p$ is continuous in $p$ for $p>2$. According to Remark \ref{BDG},
we have $c_p=1$ and $d_p=2$ when $p=2$, so
\[
\lim_{p\to 2^+} \theta_p=\frac{4K^2\mathcal L^2}{\omega^2}(1+10\omega+2b).
\]
\end{remark}

\begin{lemma}\label{lB} {\rm {(\cite{CL})}}
Let $u,f\in C_{b}(\mathbb R,\mathbb
R_{+})$ and $\omega >a \ge 0$, then the following statements hold.
\begin{enumerate}
\item If
\begin{equation*}\label{eqB1}
u(t)\le \int_{-\infty}^{t}{\rm e}^{-\omega (t-\tau)}(a u(\tau) +
f(\tau)) {\rm d}\tau
\end{equation*}
for any $t\in \mathbb R$, then
\begin{equation*}\label{eqB2}
u(t)\le \int_{-\infty}^{t}{\rm e}^{-\alpha(t-\tau)}f(\tau) {\rm d}\tau,
\end{equation*}
where $\alpha:=\omega -a$.
\item If $l>k>0$, then
\begin{equation*}\label{eqB3}
\max\limits_{|t|\le k}u(t)\le
\frac{{\rm e}^{\alpha k}{\rm e}^{-\alpha l}}{\alpha}\sup\limits_{t\in\mathbb
R}f(t)+\frac{1-{\rm e}^{-\alpha k}{\rm e}^{-\alpha l}}{\alpha} \max\limits_{|t|\le l}f(t).
\end{equation*}
\end{enumerate}
\end{lemma}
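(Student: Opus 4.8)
The plan is to prove (i) by a comparison argument and then obtain (ii) from (i) by a direct splitting of the integral.

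For (i), I would introduce the candidate majorant
\[
w(t):=\int_{-\infty}^{t}{\rm e}^{-\alpha(t-\tau)}f(\tau)\,{\rm d}\tau,\qquad \alpha=\omega-a>0.
\]
Since $f\in C_b(\mathbb R,\mathbb R_+)$ and $\alpha>0$, the function $w$ is well defined and bounded with $\|w\|_\infty\le\|f\|_\infty/\alpha$. Differentiating (equivalently, multiplying by ${\rm e}^{\omega t}$ and integrating from $-\infty$, using that ${\rm e}^{\omega\tau}w(\tau)\to 0$) shows that $w$ solves $w'+\alpha w=f$, i.e. $w'+\omega w=aw+f$, whose mild form is exactly
\[
w(t)=\int_{-\infty}^{t}{\rm e}^{-\omega(t-\tau)}\big(aw(\tau)+f(\tau)\big)\,{\rm d}\tau.
\]
Thus $w$ realizes the hypothesized inequality with equality. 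The structural fact behind this is that the effective (resolvent) kernel attached to $a\,{\rm e}^{-\omega(t-\tau)}$ is precisely ${\rm e}^{-\alpha(t-\tau)}$, since $\sum_{n\ge 0}a^{n}(t-\tau)^{n}{\rm e}^{-\omega(t-\tau)}/n!={\rm e}^{-\alpha(t-\tau)}$; casting the problem as a comparison with $w$ lets me avoid estimating this Neumann series remainder directly.

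Subtracting the two relations, the difference $v:=u-w$ satisfies
\[
v(t)\le a\int_{-\infty}^{t}{\rm e}^{-\omega(t-\tau)}v(\tau)\,{\rm d}\tau,\qquad t\in\mathbb R.
\]
Because $u$ and $w$ are bounded, $V:=\sup_{t\in\mathbb R}v(t)<\infty$. Replacing $v(\tau)$ by $V$ inside the integral and using $\int_{-\infty}^{t}{\rm e}^{-\omega(t-\tau)}\,{\rm d}\tau=1/\omega$ gives $v(t)\le (a/\omega)V$ for every $t$, hence $V\le(a/\omega)V$. As $a/\omega<1$ (the only place where $\omega>a$ enters), this forces $V\le 0$, i.e. $u\le w$, which is the assertion of (i).

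For (ii), I would start from the bound in (i) and split the integral at $-l$; this is legitimate since $l>k\ge|t|$ forces $-l<t$. On $(-\infty,-l]$ I estimate $f$ by $\sup_{s}f(s)$ and integrate to get ${\rm e}^{-\alpha(t+l)}\sup_s f(s)/\alpha$, which over $|t|\le k$ is largest at $t=-k$ and yields the first term $\tfrac{{\rm e}^{\alpha k}{\rm e}^{-\alpha l}}{\alpha}\sup_s f(s)$. On $[-l,t]$ every $\tau$ obeys $|\tau|\le l$, so bounding $f(\tau)$ by $\max_{|\tau|\le l}f(\tau)$ and integrating gives $\tfrac{1}{\alpha}\big(1-{\rm e}^{-\alpha(t+l)}\big)\max_{|\tau|\le l}f(\tau)$, largest over $|t|\le k$ at $t=k$, producing the second term $\tfrac{1-{\rm e}^{-\alpha k}{\rm e}^{-\alpha l}}{\alpha}\max_{|\tau|\le l}f(\tau)$. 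Taking the maximum of the sum over $|t|\le k$ and majorizing it by the sum of the two individual maxima gives exactly the claimed inequality. The only genuine subtlety lies in (i): one must verify the resolvent identity (that $w$ solves the equation with equality) and invoke the a priori boundedness of $u$ so that $V<\infty$ and the contraction step $V\le(a/\omega)V$ closes; part (ii) is then routine bookkeeping of the two integral pieces and their extremal endpoints.
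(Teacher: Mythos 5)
Your argument is correct. Note that the paper itself gives no proof of this lemma --- it is quoted verbatim from the reference [CL] --- so there is nothing in-text to compare against; what you have supplied is a complete, self-contained and elementary justification. The key points all check out: the majorant $w(t)=\int_{-\infty}^{t}{\rm e}^{-\alpha(t-\tau)}f(\tau)\,{\rm d}\tau$ is bounded, solves $w'+\omega w=aw+f$, and hence satisfies the integral relation with equality (the boundary term ${\rm e}^{-\omega(t-s)}w(s)$ vanishes as $s\to-\infty$ because $w$ is bounded and $\omega>0$); the difference $v=u-w$ has finite supremum $V$ by the standing assumption $u\in C_b$, and the nonnegativity of the kernel lets you replace $v(\tau)$ by $V$ to close the contraction $V\le (a/\omega)V$, which forces $V\le0$ precisely because $\omega>a$. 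The degenerate case $a=0$ is harmless. Part (ii) is exactly the splitting at $-l$ one expects, and your endpoint extremizations ($t=-k$ for the tail term, $t=k$ for the near term) reproduce the stated constants; the observation that $\tau\in[-l,t]$ with $|t|\le k<l$ implies $|\tau|\le l$ is the only point that needed checking and you made it. The Neumann-series remark is a nice sanity check but is not needed for the proof.
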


\begin{theorem}\label{thmtwo}
Consider {\rm(\ref{SemiSDE})}. Suppose that the semigroup $\{T(t)\}_{t\geq0}$ acting on $\mathbb{H}$ is dissipative
such that {\rm(\ref{expi})} holds; $f\in C(\mathbb{R}\times\mathbb H,\mathbb H)$, $g\in C(\mathbb{R}\times\mathbb H,L(U,\mathbb H))$, and
$F, G\in C(\mathbb{R}\times\mathbb H,\mathcal L^2(\nu;\mathbb H))$. Assume that $W$ as well as $N$ are
the same as in Section \ref{levy} and $f, g, F, G$ satisfy {\rm(E1)}, {\rm(E2)}.
If $\mathcal L<\cfrac{\omega}{2K\sqrt{1+2\omega+2b}}$, then {\rm(\ref{SemiSDE})} has a unique solution $\xi\in C(\mathbb R,B[0,r])$, where
$B[0,r]:=\{Y\in\mathcal{L}^2(\mathbf {P};\mathbb H):\|Y\|_{\mathcal L^2(\mathbf{P}; \mathbb{H})}\leq r\}$ and
\begin{align}\label{four 1}
r=\cfrac{2KA_0\sqrt{1+2\omega+2b}}{\omega-2K\mathcal L\sqrt{1+2\omega+2b}}.
\end{align}
Moreover, suppose that $f,g,F,G$ satisfy {\rm(E1$'$)}, {\rm(E2$'$)}, {\rm(E3)} additionally. Then if
\begin{equation}\label{L}
\mathcal{L}<\bigg\{\cfrac{\omega}{2K\sqrt{2+4\omega+4b}}\wedge \cfrac{\omega}{2K\sqrt{1+10\omega+2b}}\bigg\},
\end{equation}
we have $\mathfrak M^{u}_{(f,g,F,G)} \subseteq \tilde{\mathfrak M}^{u}_{\xi}$, where $\tilde{\mathfrak M}^{u}_{\xi}$
means the set of all sequences $\{t_n\}$ such that $\xi(t+t_n)$ converges in distribution uniformly in $t\in\mathbb R$; if
\begin{equation}\label{L11}
\mathcal{L}<\cfrac{\omega}{2K\sqrt{2+8\omega+4b}},
\end{equation}
the solution $\xi$ is strongly compatible in distribution.
\end{theorem}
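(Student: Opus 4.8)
The plan is to obtain all three assertions from contraction-mapping and Gronwall-type arguments built on the mild-solution representation, reusing the four moment estimates \eqref{third seven}--\eqref{third ten} already established for the linear equation.

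For the existence and uniqueness in $C(\mathbb R,B[0,r])$, I would introduce the solution operator $\mathcal S$ given by the right-hand side of the mild-solution identity (as in the proof of Theorem \ref{prop4.1}), now viewed on the complete metric space $C(\mathbb R,B[0,r])$. Writing $\|f(\tau,Y)\|_{\mathcal L^2(\mathbf P;\mathbb H)}\le A_0+\mathcal L\|Y\|_{\mathcal L^2(\mathbf P;\mathbb H)}$ from (E1)--(E2), and similarly for $g,F,G$, the bounds \eqref{third seven}--\eqref{third ten} together with $|a_1+a_2+a_3+a_4|^2\le4\sum_i|a_i|^2$ give
$$\|\mathcal S Y\|_\infty\le\frac{2K\sqrt{1+2\omega+2b}}{\omega}\bigl(A_0+\mathcal L\|Y\|_\infty\bigr),$$
the constant $\sqrt{1+2\omega+2b}$ arising from summing the four coefficients $\tfrac1{\omega^2},\tfrac1{2\omega},\tfrac1{2\omega},\tfrac1\omega+\tfrac{2b}{\omega^2}$. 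The radius $r$ in \eqref{four 1} is precisely the fixed point of $s\mapsto\frac{2K\sqrt{1+2\omega+2b}}{\omega}(A_0+\mathcal L s)$, so $\|Y\|_\infty\le r$ forces $\|\mathcal SY\|_\infty\le r$; here $\mathcal L<\omega/(2K\sqrt{1+2\omega+2b})$ keeps the denominator of \eqref{four 1} positive. Applying the same four estimates to the increments $f(\tau,Y_1)-f(\tau,Y_2)$, etc., yields $\|\mathcal SY_1-\mathcal SY_2\|_\infty\le\frac{2K\mathcal L\sqrt{1+2\omega+2b}}{\omega}\|Y_1-Y_2\|_\infty$ with contraction constant below $1$, so Banach's theorem produces the unique $\xi\in C(\mathbb R,B[0,r])$ (its $\mathcal L^2$-continuity being checked exactly as in Step 1 of Theorem \ref{thmone}).

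For the compatibility statements, fix a sequence $\{t_n\}$ in $\mathfrak M^{u}_{(f,g,F,G)}$ (respectively $\mathfrak M_{(f,g,F,G)}$) with limit $(\widetilde f,\widetilde g,\widetilde F,\widetilde G)\in H(f,g,F,G)$; by Remark \ref{H(f,g,F,G)} this limit again satisfies (E1)--(E2) with the same constants and so has its own bounded solution $\widetilde\xi$. Let $\xi_n$ solve \eqref{SemiSDE} with the shifted coefficients $(f^{t_n},g^{t_n},F^{t_n},G^{t_n})$; as in Step 2 of Theorem \ref{thmone}, $\xi_n(t)$ and $\xi(t+t_n)$ have the same law because the time-shifted Wiener and Poisson data are equidistributed with the originals. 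Setting $\zeta_n:=\xi_n-\widetilde\xi$ and splitting each increment as
$$f^{t_n}(\tau,\xi_n(\tau))-\widetilde f(\tau,\widetilde\xi(\tau))=\bigl[f^{t_n}(\tau,\xi_n(\tau))-f^{t_n}(\tau,\widetilde\xi(\tau))\bigr]+\bigl[f^{t_n}(\tau,\widetilde\xi(\tau))-\widetilde f(\tau,\widetilde\xi(\tau))\bigr],$$
the first bracket is controlled through (E2) by $\mathcal L^2\,\mathbb E|\zeta_n(\tau)|^2$, while the second is the deterministic defect $\varepsilon_n:=\sup_\tau\mathbb E|f^{t_n}(\tau,\widetilde\xi(\tau))-\widetilde f(\tau,\widetilde\xi(\tau))|^2$ (with analogues for $g,F,G$). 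In the $\mathfrak M^{u}$ case the coefficients converge uniformly in $\tau$, so I would take the supremum over $t$ directly and reach $\mathbb E|\zeta_n(t)|^2\le\theta\,\|\zeta_n\|_\infty^2+C\varepsilon_n$ with $\theta=\frac{8K^2\mathcal L^2(1+2\omega+2b)}{\omega^2}$; the extra factor $2$ over Step 1 comes from the $|a+b|^2\le2|a|^2+2|b|^2$ split, and $\theta<1$ is exactly the first threshold in \eqref{L}, giving $\|\zeta_n\|_\infty^2\le C\varepsilon_n/(1-\theta)$. In the strong-compatibility case the convergence is only uniform on compact $t$-intervals, so I would instead keep the estimate in integral form $u(t)\le\int_{-\infty}^t{\rm e}^{-\omega(t-\tau)}(a\,u(\tau)+h_n(\tau)){\rm d}\tau$ with $u=\mathbb E|\zeta_n|^2$ and $a=\frac{8K^2\mathcal L^2(1+4\omega+2b)}{\omega}$, and invoke Lemma \ref{lB}: part (i) absorbs $a\,u$ provided $\omega>a$, which is precisely \eqref{L11} (the factors ${\rm e}^{-2\omega(t-\tau)}$ from It\^o isometry must now be dominated by ${\rm e}^{-\omega(t-\tau)}$, costing the factor $2$ that distinguishes \eqref{L11} from \eqref{L}), while part (ii) localizes to $\max_{|t|\le k}$ just as in the passage to \eqref{itemthird}. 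In both cases $\mathcal L^2$-convergence of $\zeta_n$ upgrades to convergence in distribution, and the equidistribution of $\xi_n(t)$ and $\xi(t+t_n)$ yields $\xi(t+t_n)\to\widetilde\xi(t)$ in distribution, uniformly in $t$ (respectively uniformly on compacts).

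The step I expect to be the main obstacle is proving $\varepsilon_n\to0$, i.e.\ controlling $\mathbb E|f^{t_n}(\tau,\widetilde\xi(\tau))-\widetilde f(\tau,\widetilde\xi(\tau))|^2$ uniformly in $\tau$ (or for $|\tau|\le l$). The convergence $f^{t_n}\to\widetilde f$ in $BUC(\mathbb R\times\mathbb H,\mathbb H)$ is uniform only on \emph{bounded} subsets of $\mathbb H$, whereas $\widetilde\xi(\tau)$ is an unbounded random variable, so one cannot conclude directly. I would resolve this by truncation: split the expectation over $\{|\widetilde\xi(\tau)|\le R\}$, where the uniform-on-bounded-sets convergence applies and the contribution tends to $0$, and over $\{|\widetilde\xi(\tau)|>R\}$, where I bound the integrand by the growth $A_0+\mathcal L|\widetilde\xi(\tau)|$ and estimate the tail by Chebyshev's inequality together with a \emph{uniform} $\mathcal L^p$-moment bound $\sup_\tau\mathbb E|\widetilde\xi(\tau)|^p<\infty$ for some $p>2$. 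This uniform $\mathcal L^p$-bound is exactly what (E1$'$), (E2$'$) supply: by Theorem \ref{prop4.1} and Remark \ref{RLp} the solution lies in $C_b(\mathbb R,\mathcal L^p(\mathbf P;\mathbb H))$ once $\mathcal L<\omega/(2K\sqrt{1+10\omega+2b})$ (the second threshold in \eqref{L}), and by uniqueness it coincides with $\xi$; the hull solution $\widetilde\xi$ inherits the same bound because $(\widetilde f,\widetilde g,\widetilde F,\widetilde G)$ shares the constants $A_0,\mathcal L$ by Remark \ref{H(f,g,F,G)}. Sending $R\to\infty$ after $n\to\infty$ then forces $\varepsilon_n\to0$ and closes the argument, the same truncation handling the $g,F,G$ defects.
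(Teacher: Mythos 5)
Your proposal is correct and follows essentially the same route as the paper: a fixed-point argument on $C(\mathbb R,B[0,r])$ reusing the linear estimates, then the split of each coefficient increment into a Lipschitz part absorbed by the contraction (respectively by Lemma \ref{lB} under \eqref{L11}) and a defect part, with the shifted and limit solutions identified in law via the equidistribution of the shifted noise. The only cosmetic difference is that where the paper controls the defect terms by uniform integrability of $\{|\widetilde\xi(\tau)|^2\}$ and the Vitali convergence theorem, you use an explicit truncation over $\{|\widetilde\xi(\tau)|\le R\}$ plus Chebyshev with the same uniform $\mathcal L^p$ bound from Theorem \ref{prop4.1}; this is the same idea and arguably a more transparent implementation.
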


\begin{proof}
{\bf Step 1. There exists a unique solution $\xi\in C(\mathbb{R}, B[0,r])$.}
Note that $C(\mathbb R,B[0,r])$ is a complete metric space. Let $\phi\in C(\mathbb R,B[0,r])$ and we put
$h_1(t):=f(t,\phi(t))$, $h_2(t):=g(t,\phi(t))$, $h_3(t,x):=F(t,\phi(t-),x)$, $h_4(t,x):=G(t,\phi(t-),x)$.
Since $f$ satisfies {\rm(E1)} and {\rm(E2)}, we have for $t\in\mathbb R$
\begin{equation}\label{4.1}
\|h_1(t)\|_{\mathcal L^2(\mathbf{P}; \mathbb{H})}=\|f(t,\phi(t))\|_{\mathcal L^2(\mathbf{P}; \mathbb{H})}
=\|f(t,0)+f(t,\phi(t))-f(t,0)\|_{\mathcal L^2(\mathbf{P}; \mathbb{H})}\leq A_0+\mathcal Lr.
\end{equation}
Similarly, we obtain for $t\in\mathbb R$
\begin{equation}\label{4.2}
\|h_2(t)\mathcal Q^\frac{1}{2}\|_{\mathcal L^2(\mathbf P;L_2(U,\mathbb H))}\leq A_0+\mathcal Lr,
\end{equation}
\begin{equation}\label{4.3}
\|h_3(t,x)\|_{\mathcal L^2(\mathbf P_\nu;\mathbb H)}\leq  A_0+\mathcal Lr,
\end{equation}
\begin{equation}\label{4.4}
 \|h_4(t,x)\|_{\mathcal L^2(\mathbf P_\nu;\mathbb H)}\leq  A_0+\mathcal Lr.
\end{equation}
According to Theorem \ref{thmone}, the equation
\begin{align*}
{\rm d }Z(t)=&(AZ(t)+h_1(t)){\rm d }t+h_2(t){\rm d }W(t)\\
&+\int_{|x|_U<1}h_3(t,x)\widetilde{N}({\rm d }t,{\rm d }x)+\int_{|x|_U\geq1}h_4(t,x)N({\rm d }t,{\rm d }x)
\end{align*}
admits a unique solution $\psi\in C_b(\mathbb R,\mathcal{L}^2(\mathbf{P};\mathbb{H}))$. Besides by \eqref{item22} it obeys the estimate
\begin{align}\label{four one}
\|\psi\|_\infty^2\leq \frac{K^2}{\omega^2}\left(4\|h_1\|_\infty^2+2\omega\|h_2\mathcal Q^\frac{1}{2}\|_\infty^2
+2\omega\|h_3\|_\infty^2+(4\omega+8b)\|h_4\|_\infty^2\right).
\end{align}
From (\ref{4.1})-(\ref{four one}) and (\ref{four 1}), we get
\begin{align*}
\|\psi\|_\infty
&\leq\frac{2K}{\omega}\sqrt{1+2\omega+2b}(A_0+\mathcal Lr)=r.
\end{align*}
Thus $\psi\in C(\mathbb R,B[0,r])$. Let $\Phi(\phi):=\psi$. It follows from the above argument that
$\Phi:C(\mathbb R,B[0,r])\to C(\mathbb R,B[0,r])$.

Now we show that $\Phi$ is a contraction operator. For any $\psi_1,\psi_2\in C(\mathbb R,B[0,r])$,
note that $\psi_1-\psi_2=\Phi(\phi_1)
-\Phi(\phi_2)$ is the unique solution in $C_b(\mathbb{R},\mathcal{L}^2(\mathbf{P};\mathbb{H}))$ of the
equation
\begin{align*}
{\rm d }u(t)=&(Au(t)+f(t,\phi_1(t))-f(t,\phi_2(t))){\rm d }t +(g(t,\phi_1(t))-g(t,\phi_2(t))){\rm d }W(t)\\
&+\int_{|x|_U<1}(F(t,\phi_1(t-),x)-F(t,\phi_2(t-),x))\widetilde{N}({\rm d }t,{\rm d }x)\\
&+\int_{|x|_U\geq1}(G(t,\phi_1(t-),x)-G(t,\phi_2(t-),x))N({\rm d }t,{\rm d }x).
\end{align*}
According to (\ref{item2}), we have the following estimate
\begin{align}\label{4.7}
&\|\Phi(\phi_1)-\Phi(\phi_2)\|_{\infty}^2 \\
&\quad\leq\frac{K^2}{\omega^2}
\bigg(4\sup_{t\in\mathbb R}\mathbb E|f(t,\phi_1(t))-f(t,\phi_2(t))|^2+2\omega\sup_{t\in\mathbb R}
\mathbb E\|(g(t,\phi_1(t))-g(t,\phi_2(t)))\mathcal{Q}^\frac{1}{2}\|_{L_2(U,\mathbb H)}^2 \nonumber\\
&\quad\qquad+2\omega\sup_{t\in\mathbb{R}}\int_{|x|_U<1}\mathbb E|F(t,\phi_1(t-),x)-F(t,\phi_2(t-),x)|^2\nu({\rm d }x) \nonumber\\
&\quad\qquad+(4\omega+8b)\sup_{t\in\mathbb{R}}\int_{|x|_U\geq1}\mathbb E|G(t,\phi_1(t-),x)
-G(t,\phi_2(t-),x)|^2\nu({\rm d }x)\bigg)\nonumber\\
&\quad\leq\frac{4K^2\mathcal{L}^2}{\omega^2}(1+2\omega+2b)\|\phi_1-\phi_2\|_{\infty}^2\nonumber\\
&\quad:=\theta_2\|\phi_1-\phi_2\|_{\infty}^2.\nonumber
\end{align}
By our assumption on $\mathcal{L}$ we have $\theta_2<1$,
so $\Phi$ is a contraction operator on $C(\mathbb R,B[0,r])$. On the basis of Banach fixed point theorem,
there exists a unique function $\xi\in C(\mathbb R,B[0,r])$ such that $\Phi(\xi)=\xi$.

{\bf Step 2. $\mathfrak M^{u}_{(f,g,F,G)} \subseteq \tilde{\mathfrak M}^{u}_{\xi}$.}
Let $\{t_n\}\in \mathfrak M_{(f,g,F,G)}^{u}$. Then there exists
$(\tilde{f},\tilde{g},\tilde{F},\tilde{G})\in H(f,g,F,G)$ such that for any $r>0$
\begin{equation}\label{4.8}
\sup_{t\in\mathbb{R},|Y|\leq r}|f(t+t_n,Y)-\widetilde{f}(t,Y)|\to 0,
\end{equation}
\begin{equation}\label{4.9}
\sup_{t\in\mathbb{R},|Y|\leq r}\|(g(t+t_n,Y)-\widetilde{g}(t,Y))\mathcal{Q}^\frac{1}{2}\|_{L_2(U,\mathbb H)}\to 0,
\end{equation}
\begin{equation}\label{4.9.1}
\sup_{t\in\mathbb{R},|Y|\leq r}\int_{|x|_U<1}|F(t+t_n,Y,x)-\widetilde{F}(t,Y,x)|^2\nu({\rm d }x)\to 0,
\end{equation}
and
\begin{equation}\label{4.9.2}
\sup_{t\in\mathbb{R},|Y|\leq r}\int_{|x|_U\geq1}|G(t+t_n,Y,x)-\widetilde{G}(t,Y,x)|^2\nu({\rm d }x)\to 0
\end{equation}
as $n\to\infty$.
Consider equations
\begin{align}\label{4.10}
{\rm d }Y(t)=&(AY(t)+f^{t_n}(t,Y(t))){\rm d }t+g^{t_n}(t,Y(t)){\rm d }W(t)\\
&+\int_{|x|_U<1}F^{t_n}(t,Y(t-),x)\widetilde{N}({\rm d }t,{\rm d }x)  \nonumber\\
&+\int_{|x|_U\geq1}G^{t_n}(t,Y(t-),x)N({\rm d }t,{\rm d }x) \qquad (n\in\mathbb{N})\nonumber
\end{align}
and
\begin{align}\label{4.11}
{\rm d }Y(t)=&(AY(t)+\widetilde{f}(t,Y(t))){\rm d }t+\widetilde{g}(t,Y(t)){\rm d }W(t) \\
&+\int_{|x|_U<1}\widetilde{F}(t,Y(t-),x)\widetilde{N}({\rm d }t,{\rm d }x)
+\int_{|x|_U\geq1}\widetilde{G}(t,Y(t-),x)N({\rm d }t,{\rm d }x).\nonumber
\end{align}
Since $(f^{t_n},g^{t_n},F^{t_n},G^{t_n})$ and $(\widetilde{f},\widetilde{g},\widetilde{F},\widetilde{G})$
satisfy (E1) and (E2) (see Remark     \ref{H(f,g,F,G)}), (\ref{4.10})
(respectively, (\ref{4.11})) has a unique solution $\xi_n\in C(\mathbb{R},B[0,r])$ (respectively,
$\widetilde{\xi}\in C(\mathbb{R},B[0,r])$). Next we show that $\{\xi_n(t)\}$ converges to $\widetilde{\xi}(t)$
in $\mathcal{L}^2$-norm uniformly in $t\in\mathbb{R}$. Let
\begin{align*}
&a_n(t)=f^{t_n}(t,\xi_n(t)), \quad\quad\quad\quad  b_n(t)=g^{t_n}(t,\xi_n(t)),\\
&c_n(t,x)=F^{t_n}(t,\xi_n(t-),x),\quad d_n(t,x)=G^{t_n}(t,\xi_n(t-),x), \\
&\widetilde{a}(t)=\widetilde{f}(t,\widetilde{\xi}(t)), \quad\quad\qquad\qquad \widetilde{b}(t)=\widetilde{g}(t,\widetilde{\xi}(t)), \\
&\widetilde{c}(t,x)=\widetilde{F}(t,\widetilde{\xi}(t-),x),\quad\qquad \widetilde{d}(t,x)=\widetilde{G}(t,\widetilde{\xi}(t-),x).
\end{align*}
Then $\xi_n(n\in \mathbb N)$ is the unique solution from $C(\mathbb{R},B[0,r])$ of equation
\begin{align*}
{\rm d }Y(t)=&(AY(t)+a_n(t)){\rm d }t+b_n(t){\rm d }W(t)\\
&+\int_{|x|_U<1}c_n(t,x)\widetilde{N}({\rm d }t,{\rm d }x)+\int_{|x|_U\geq1}d_n(t,x)N({\rm d }t,{\rm d }x)\quad(n\in \mathbb N)
\end{align*}
and $\widetilde{\xi}$ is the unique solution from $C(\mathbb{R},B[0,r])$ of equation
\begin{align*}
{\rm d }Y(t)=&(AY(t)+\widetilde{a}(t)){\rm d }t+\widetilde{b}(t){\rm d }W(t) \\
&+\int_{|x|_U<1}\widetilde{c}(t,x)\widetilde{N}({\rm d }t,{\rm d }x)+\int_{|x|_U\geq1}\widetilde{d}(t,x)N({\rm d }t,{\rm d }x).
\end{align*}
Note that $\phi_{n}:=\xi_{n}-\tilde{\xi}$
is the unique solution from $C(\mathbb R, B[0,2r])$ of equation
\begin{align}\label{4.12}
{\rm d }Y(t)=&(AY(t)+a_n(t)-\widetilde{a}(t)){\rm d }t+(b_n(t)-\widetilde{b}(t)){\rm d }W(t) \\
&+\int_{|x|_U<1}(c_n(t,x)-\widetilde{c}(t,x))\widetilde{N}({\rm d }t,{\rm d }x) \nonumber\\
&+\int_{|x|_U\geq1}(d_n(t,x)-\widetilde{d}(t,x))N({\rm d }t,{\rm d }x) \quad (n\in \mathbb N), \nonumber
\end{align}
where $a_n-\widetilde{a}\in C_b(\mathbb{R},\mathcal{L}^2(\mathbf{P};\mathbb{H}))$,
$b_n-\widetilde{b}\in C_b(\mathbb R,\mathcal L^2(\mathbf P;L(U,\mathbb H)))$,
$c_n-\widetilde{c}\in C_b(\mathbb R,\mathcal L^2(\mathbf P_\nu;\mathbb H))$,
$d_n-\widetilde{d}\in C_b(\mathbb R,\mathcal L^2(\mathbf P_\nu;\mathbb H))$.
By (\ref{item22}), we have
\begin{align}\label{4.13}
\|\phi_n\|_\infty^2 \leq&\frac{K^2}{\omega^2}\Big(4\|a^{t_n}-\widetilde a\|_{\infty}^2
+2\omega\|(b^{t_n}-\widetilde b)\mathcal Q^\frac{1}{2}\|_{\infty}^2 +2\omega\|c^{t_n}-\widetilde c\|_\infty^2\\
&\quad
+(4\omega+8b)\|d^{t_n}-\widetilde d\|_\infty^2\Big). \nonumber
\end{align}
Since $(f^{t_n},g^{t_n},F^{t_n},G^{t_n})(n\in N)$ and $(\widetilde{f},\widetilde{g},\widetilde{F},\widetilde{G})$ satisfy (E1) and (E2), and
$\xi_n,\widetilde{\xi}\in C(\mathbb{R},B[0,r])$ $(n\in \mathbb N)$, we have
\begin{align}\label{4.14}
\mathbb E|a_n(\tau)-\widetilde a(\tau)|^2 &=\mathbb E|f^{t_n}(\tau,\xi_n(\tau))
-f^{t_n}(\tau,\widetilde\xi(\tau))+f^{t_n}(\tau,\widetilde\xi(\tau))-\widetilde f(\tau,\widetilde\xi(\tau))|^2 \\
&\leq2\mathbb E|f^{t_n}(\tau,\xi_n(\tau))-f^{t_n}(\tau,\widetilde\xi(\tau))|^2
+2\mathbb E|f^{t_n}(\tau,\widetilde\xi(\tau))-\widetilde f(\tau,\widetilde\xi(\tau))|^2  \nonumber\\
&\leq2\mathcal L^2\mathbb E|\xi_n(\tau)-\widetilde\xi(\tau)|^2+2\sup_{\tau\in\mathbb R}
\mathbb E|f^{t_n}(\tau,\widetilde\xi(\tau))-\widetilde f(\tau,\widetilde\xi(\tau))|^2  \nonumber\\
&\leq2\mathcal L^2\|\phi_n\|_{\infty}^2+2\sup_{\tau\in\mathbb R}\mathbb EI_1^2(n,\tau),\nonumber
\end{align}
where
\begin{equation*}
I_1^2(n,\tau):=|f^{t_n}(\tau,\widetilde{\xi}(\tau))-\widetilde{f}(\tau,\widetilde{\xi}(\tau))|^2.
\end{equation*}
Similarly, we get
\begin{equation}\label{4.15 1}
\mathbb E\|(b_n(\tau)-\widetilde{b}(\tau))\mathcal{Q}^\frac{1}{2}\|_{L_2(U,\mathbb H)}^2\leq2\mathcal{L}^2\|\phi_n\|_\infty^2
+2\sup_{\tau\in\mathbb{R}}\mathbb EI_2^2(n,\tau),
\end{equation}
where
\begin{equation*}
I_2^2(n,\tau):=\|(g^{t_n}(\tau,\widetilde{\xi}(\tau))
-\widetilde{g}(\tau,\widetilde{\xi}(\tau)))\mathcal Q^\frac{1}{2}\|_{L_2(U,\mathbb{H})}^2;
\end{equation*}

\begin{align}\label{4.15 2}
\int_{|x|_U<1}\mathbb E|c_n(\tau,x)-\widetilde c(\tau,x)|^2\nu({\rm d }x)
\leq2\mathcal L^2\|\phi_n\|_\infty^2+2\sup_{\tau\in\mathbb R}\mathbb E I_3^2(n,\tau),
\end{align}
where
\begin{equation*}
I_3^2(n,\tau):=\int_{|x|_U<1}|F^{t_n}(\tau,\widetilde{\xi}(\tau-),x)-
\widetilde{F}(\tau,\widetilde{\xi}(\tau-),x)|^2\nu({\rm d }x);
\end{equation*}
\begin{align}\label{4.15 3}
\int_{|x|_U\geq1}\mathbb E|d_n(\tau,x)-\widetilde d(\tau,x)|^2\nu({\rm d }x)
\leq2\mathcal L^2\|\phi_n\|_\infty^2+2\sup_{\tau\in\mathbb R}\mathbb E I_4^2(n,\tau),
\end{align}
where
\begin{equation*}
I_4^2(n,\tau):=\int_{|x|_U\geq1}|G^{t_n}(\tau,\widetilde{\xi}(\tau-),x)
-\widetilde{G}(\tau,\widetilde{\xi}(\tau-),x)|^2\nu({\rm d }x).
\end{equation*}
In view of (\ref{4.13})-(\ref{4.15 3}), we have
\begin{align}\label{4.16}
c\|\phi_n\|_\infty^2\leq&\frac{8K^2}{\omega^2}\sup_{\tau\in\mathbb{R}}\mathbb EI_1^2(n,\tau)
+\frac{4K^2}{\omega}\sup_{\tau\in\mathbb{R}}\mathbb EI_2^2(n,\tau) \\
&+\frac{4K^2}{\omega}\sup_{\tau\in\mathbb{R}}\mathbb EI_3^2(n,\tau)
+\left(\frac{8K^2}{\omega}+\frac{16K^2b}{\omega^2}\right)
\sup_{\tau\in\mathbb{R}}\mathbb EI_4^2(n,\tau),\nonumber
\end{align}
where $c=1-\frac{8K^2\mathcal L^2}{\omega^2}(1+2\omega+2b)$. By our assumption on $\mathcal{L}$ (i.e. (\ref{L})),
the coefficient of $\|\phi_n\|_\infty^2$ is positive.

Now we show that the family $\{|\widetilde \xi(\tau)|^2:\tau\in\mathbb R\}$ is uniformly integrable.
Indeed, by (\ref{4.7}) and Remark \ref{H(f,g,F,G)}-(i), for $p=2$, the contraction constant $\theta_2$ for (\ref{4.11}) is
\begin{align*}
\theta_2=\cfrac{4K^2\mathcal{L}^2}{\omega^2}(1+2\omega+2b).
\end{align*}
Comparing to Remark \ref{RLp}, we have
\begin{align*}
\lim_{p\to 2^+} \theta_p=\theta_2+\frac{32K^2\mathcal{L}^2}{\omega^2}.
\end{align*}
We also note that by our assumption on $\mathcal L$ (i.e. (\ref{L})), $\lim\limits_{p\to 2^+}\theta_p<1$.
So by Proposition \ref{prop4.1}, (\ref{4.11}) has a unique $\mathcal{L}^p$-bounded solution for some $p>2$.
This $\mathcal L^p$-bounded solution is exactly the unique $\mathcal{L}^2$-bounded solution $\tilde\xi$ of (\ref{4.11}).
Then the family $\{|\widetilde{\xi}(\tau)|^2:\tau\in\mathbb{R}\}$ is uniformly integrable. Hence by (E1) and (E2),
the families $\{I_i^2(n,\tau): n\in\mathbb N, \tau\in \mathbb R\}$ $(i=1,2,3,4)$
are uniformly integrable.

  By (\ref{4.16}), we have
\begin{align*}
c\lim_{n\to\infty}\|\phi_n\|_\infty^2 &\leq\frac{8K^2}{\omega^2}\lim_{n\to\infty}
\mathbb E\sup_{\tau\in\mathbb{R}}I_1^2(n,\tau)+\frac{4K^2}{\omega}\lim_{n\to\infty}\mathbb E\sup_{\tau\in\mathbb{R}}I_2^2(n,\tau) \\
&\quad+\frac{4K^2}{\omega}\lim_{n\to\infty}\mathbb E\sup_{\tau\in\mathbb{R}}I_3^2(n,\tau)
+\left(\frac{8K^2}{\omega}+\frac{16K^2b}{\omega^2}\right)
\lim_{n\to\infty}\mathbb E\sup_{\tau\in\mathbb{R}}I_4^2(n,\tau)\\
&\leq\frac{8K^2}{\omega^2}\mathbb E\lim_{n\to\infty}
\sup_{\tau\in\mathbb{R}}I_1^2(n,\tau)+\frac{4K^2}{\omega}\mathbb E
\lim_{n\to\infty} \sup_{\tau\in\mathbb{R}}I_2^2(n,\tau) \\
&\quad+\frac{4K^2}{\omega}\mathbb E \lim_{n\to\infty} \sup_{\tau\in\mathbb{R}}I_3^2(n,\tau)
+\left(\frac{8K^2}{\omega}+\frac{16K^2b}{\omega^2}\right)
\mathbb E \lim_{n\to\infty} \sup_{\tau\in\mathbb{R}}I_4^2(n,\tau)\\
&=0,
\end{align*}
where the second inequality holds by (\ref{4.8})-(\ref{4.9.2}) and Vitali convergence theorem.
Then we get the required result, i.e. $\xi_n(t)\to\widetilde\xi(t)$ uniformly in $t\in\mathbb{R}$ in $\mathcal{L}^2$-norm.

  On the one hand $\mathcal{L}^2$-convergence implies convergence in distribution, so we have
$\xi_n(t)\to\widetilde\xi(t)$ in distribution uniformly on $\mathbb R$. On the other hand,
$\xi(t+t_n)$ satisfies the equation
\begin{align*}
\xi(t+t_n)=&\int_{-\infty}^{t}T(t-\tau)f(\tau+t_n,\xi(\tau+t_n)){\rm d }\tau\\
&+\int_{-\infty}^{t}T(t-\tau)g(\tau+t_n,\xi(\tau+t_n)){\rm d }W_n(\tau) \\
&+\int_{-\infty}^{t}\int_{|x|_U<1}T(t-\tau)F(\tau+t_n,\xi(\tau+t_n-),x)\widetilde{N}_n({\rm d }\tau,{\rm d }x)\\
&+\int_{-\infty}^{t}\int_{|x|_U\geq1}T(t-\tau)G(\tau+t_n,\xi(\tau+t_n-),x)N_n({\rm d }\tau,{\rm d }x)
\end{align*}
where $W_n(\tau):=W(\tau+t_n)-W(t_n)$, $N_n(\tau,x):=N(\tau+t_n,x)-N(t_n,x)$ and
$\widetilde {N}_n(\tau,x):=\widetilde {N}(\tau+t_n,x)-\widetilde {N}(t_n,x)$, $\tau\in\mathbb R$.
For each $n\in \mathbb N$, by Remark \ref{Lp}, $W_n$ is a $\mathcal Q$-Wiener process with the same law
as $W$ and $N_n$ has the same law as $N$ with compensated Poisson random measure $\widetilde{N}_n$.
Thus $\xi_n(t)$ and $\xi(t+t_n)$ share the same distribution on $\mathbb H$, and this implies
$\xi(t+t_n)\to\widetilde\xi(t)$ in distribution uniformly in $t\in\mathbb{R}$. So we have
$\{t_n\}\in \mathfrak{\tilde{M}}_{\xi}^u$.

{\bf Step 3. The solution $\xi$ is strongly compatible in distribution.} Let $\{t_n\}\in \mathfrak M_{(f,g,F,G)}$.
Then there exists
$(\widetilde{f},\widetilde{g},\widetilde{F},\widetilde{G})\in H(f,g,F,G)$ such that for any $r,l>0$
\begin{equation}\label{eq1}
\sup\limits_{|t|\leq l,|Y|\leq r}|f(t+t_n,Y)-\widetilde f(t,Y)|\to0,
\end{equation}
\begin{equation}\label{eq2}
\sup\limits_{|t|\leq l,|Y|\leq r}
\|(g(t+t_n,Y)-\widetilde g(t,Y))\mathcal{Q}^\frac{1}{2}\|_{L_2(U,\mathbb H)}\to0,
\end{equation}
\begin{equation}\label{eq3}
\sup\limits_{|t|\leq l,|Y|\leq r}\int_{|x|_U<1}|F(t+t_n,Y,x)-\widetilde{F}(t,Y,x)|^2\nu({\rm d }x)\to0,
\end{equation}
\begin{equation}\label{eq4}
\sup\limits_{|t|\leq l,|Y|\leq r}\int_{|x|_U\geq1}|G(t+t_n,Y,x)-\widetilde{G}(t,Y,x)|^2\nu({\rm d }x)\to0,
\end{equation}
as $n\to\infty$. As we have done in the proof of {Step 2}: let $\xi_n$ and $\widetilde\xi$ be the unique bounded solutions of
the shift equation and the limit equation respectively, and still denote $\phi_n=\xi_n-\widetilde{\xi}$.
We need to prove $\phi_n\to 0$ as $n\to\infty$ in the space $C(\mathbb{R},\mathcal{L}^2(\mathbf P;\mathbb H))$, i.e. for any $k>0$,
\begin{equation*}
\lim_{n\to\infty}\max_{|t|\leq k}\mathbb E|\phi_n(t)|^2=0.
\end{equation*}
Then we have $\xi_n(t)\to\widetilde\xi(t)$ in distribution uniformly in $t\in[-k,k]$ for any $k>0$. Since $\xi_n(t)$
and $\xi(t+t_n)$ share the same distribution, $\xi(t+t_n)\to\widetilde\xi(t)$ in distribution uniformly in
$t\in[-k,k]$ for all $k>0$. Then we have $\{t_n\}\in \mathfrak{\tilde{M}}_{\xi}$, and hence $\xi$ is strongly
compatible in distribution.

Since $\phi_n$ is the unique bounded solution of (\ref{4.12}), by Cauchy-Schwarz inequality, It\^o's isometry
 and properties of integrals for Poisson random measures we have
\begin{align*}
\mathbb E|\phi_n(t)|^2\leq&\frac{4K^2}{\omega}\Bigg(\int_{-\infty}^t{\rm e}^{-\omega(t-\tau)}\mathbb E|a_n(\tau)-
\widetilde{a}(\tau)|^2{\rm d }\tau  \\
&\quad+\omega\int_{-\infty}^t{\rm e}^{-2\omega(t-\tau)}\mathbb E\|(b_n(\tau)-\widetilde{b}(\tau))
\mathcal{Q}^\frac{1}{2}\|_{L_2(U,\mathbb H)}^2{\rm d }\tau \\
&\quad+\omega\int_{-\infty}^t\int_{|x|_U<1}{\rm e}^{-2\omega(t-\tau)}\mathbb E|c_n(\tau,x)-\widetilde{c}(\tau,x)|^2\nu({\rm d }x){\rm d }\tau \\
&\quad+(2\omega+2b)\int_{-\infty}^t\int_{|x|_U\geq1}{\rm e}^{-\omega(t-\tau)}
\mathbb E|d_n(\tau,x)-\widetilde{d}(\tau,x)|^2\nu({\rm d }x){\rm d }\tau\Bigg).
\end{align*}
Furthermore, considering (\ref{4.14})-(\ref{4.15 3}) and ${\rm e}^{-2\omega(t-\tau)}\leq {\rm e}^{-\omega(t-\tau)}(t\geq\tau)$, we get
\begin{align*}
\mathbb E|\phi_n(t)|^2\leq&\int_{-\infty}^t{\rm e}^{-\omega(t-\tau)}
\bigg[\left(\frac{8K^2\mathcal{L}^2}{\omega}+32K^2\mathcal{L}^2
+\frac{16K^2\mathcal{L}^2b}{\omega}\right)\mathbb E|\phi_n(\tau)|^2+\frac{8K^2}{\omega}\mathbb{E}I_1^2(n,\tau)\\
&\quad+8K^2\mathbb EI_2^2(n,\tau)+8K^2\mathbb{E}I_3^2(n,\tau)
+\left(16K^2+\frac{16K^2b}{\omega}\right)\mathbb{E}I_4^2(n,\tau)\bigg]{\rm d}\tau.
\end{align*}
By Lemma \ref{lB} we have for $l>k>0$
\begin{align}\label{4.25}
\max_{|t|\leq k}\mathbb{E}|\phi_n(t)|^2\leq&
\frac{{\rm e}^{\alpha k}{\rm e}^{-\alpha l}}{\alpha}\sup_{t\in\mathbb{R}}\bigg[
\frac{8K^2}{\omega}\mathbb{E}I_1^2(n,t)+8K^2\mathbb{E}I_2^2(n,t)\\
&\quad+8K^2\mathbb{E}I_3^2(n,t)+\Big(16K^2+\frac{16K^2b}{\omega}\Big)\mathbb EI_4^2(n,t)\bigg]\nonumber\\
&+\frac{1-{\rm e}^{-\alpha k}{\rm e}^{-\alpha l}}{\alpha}\max_{|t|\leq l}\bigg[\frac{8K^2}{\omega}\mathbb{E}I_1^2(n,t)+8K^2\mathbb EI_2^2(n,t)\nonumber\\
&\quad+8K^2\mathbb{E}I_3^2(n,t)
+\Big(16K^2+\frac{16K^2b}{\omega}\Big)\mathbb{E}I_4^2(n,t)\bigg],\nonumber
\end{align}
where
$$
\alpha:=\omega-\bigg[\frac{8K^2\mathcal{L}^2}{\omega}+32K^2\mathcal{L}^2
+\frac{16K^2\mathcal{L}^2b}{\omega}\bigg]>0
$$
by assumption \eqref{L11}. By Remark \ref{H(f,g,F,G)}-(i), we have
\begin{align}\label{4.301}
\mathbb E I_1^2(n,t)&=\mathbb E|f^{t_n}(t,\widetilde{\xi}(t))-\widetilde{f}(t,\widetilde{\xi}(t))|^2 \\
&\leq2\mathbb E|f^{t_n}(t,\widetilde{\xi}(t))|^2+2\mathbb E|\widetilde{f}(t,\widetilde{\xi}(t))|^2
\leq4(A_0+\mathcal Lr)^2,\nonumber
\end{align}
for any $t\in\mathbb R$. Similarly, we have
\begin{align}\label{4.302}
\mathbb E I_2^2(n,t)\leq 4(A_0+\mathcal Lr)^2,
\end{align}
\begin{align}\label{4.303}
\mathbb E I_3^2(n,t)\leq 4(A_0+\mathcal Lr)^2,
\end{align}
\begin{align}\label{4.304}
\mathbb E I_4^2(n,t)\leq 4(A_0+\mathcal Lr)^2,
\end{align}
for any $t\in\mathbb R$, $n\in \mathbb N$.
  Let $\{l_n\}$ be a sequence of positive numbers such that $l_n\to{+\infty}$ as $n\to\infty$. According
to (\ref{4.301})-(\ref{4.304}) and (\ref{4.25}), we obtain
\begin{align}\label{4.26}
\max_{|t|\leq k}\mathbb E|\phi_n(t)|^2\leq&\frac{32K^2{\rm e}^{\alpha k}{\rm e}^{-\alpha l_n}}{\alpha}\left(\frac{1}{\omega}
+4+\frac{2b}{\omega}\right)(A_0+\mathcal{L}r)^2   \\
&+\frac{8K^2(1-{\rm e}^{-\alpha k}{\rm e}^{-\alpha l_n})}{\alpha}\max_{|t|\leq l_n}\bigg[\frac{1}{\omega}
\mathbb EI_1^2(n,t)+\mathbb EI_2^2(n,t)  \nonumber\\
&\quad+\mathbb EI_3^2(n,t)+\left(2+\frac{2b}{\omega}\right)\mathbb EI_4^2(n,t)\bigg].\nonumber
\end{align}
 Thanks to Remark \ref{remD1}-2-(iii), (\ref{eq1})-(\ref{eq4}) and the uniform integrability of the families
 $\{I_i^2(n,\tau): n\in\mathbb N, \tau\in \mathbb R\}$ $(i=1,2,3,4)$, letting $n\to\infty$ in (\ref{4.26}) we get for any $k>0$
\begin{align*}
\lim_{n\to\infty}\max_{|t|\leq k}\mathbb E|\phi_n(t)|^2=0.
\end{align*}
The proof is complete.
\end{proof}

\begin{coro}\label{Co 4.7}
Consider \eqref{SemiSDE}. Assume that the conditions of \textsl{Theorem} {\rm\ref{thmtwo}} hold.
\begin{enumerate}
\item
If $f,g,F,G$ are jointly stationary (respectively,
$\tau$--periodic, quasi-periodic with the spectrum of frequencies
$\nu_1,\nu_2,\dots,\nu_m$, almost periodic, almost automorphic,
Birkhoff recurrent, Lagrange stable, Levitan almost periodic, almost
recurrent, Poisson stable) in $t\in\mathbb R$ uniformly w.r.t. $Y\in \mathbb H$ on every bounded subset, then so is the unique bounded
solution $\xi$ of {\rm(\ref{SemiSDE})} in distribution.

\item
If $f,g,F,G$ are jointly pseudo-periodic (respectively, pseudo-recurrent)
and $f,g,F,G$ are jointly Lagrange stable in $t\in\mathbb R$ uniformly w.r.t. $Y\in \mathbb H$ on every bounded subset, then the
unique bounded solution $\xi$ of {\rm(\ref{SemiSDE})} is pseudo-periodic
(respectively, pseudo-recurrent) in distribution.
\end{enumerate}
\end{coro}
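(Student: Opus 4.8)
The plan is to obtain Corollary \ref{Co 4.7} as an immediate consequence of Theorem \ref{thmtwo} and the abstract comparability results in Theorem \ref{th1}, by recasting ``recurrence in distribution'' of the process $\xi$ as ordinary recurrence of a curve valued in the space of probability measures. First I would introduce the map $\mathfrak X:\mathbb R\to\mathcal P(\mathbb H)$ given by $\mathfrak X(t):=\mathrm{Law}(\xi(t))$, with $\mathcal P(\mathbb H)$ carrying the $\beta$ metric. Since $\xi\in C(\mathbb R,\mathcal L^2(\mathbf P;\mathbb H))$ and $\mathcal L^2$-convergence implies convergence in distribution, $\mathfrak X$ is $\beta$-continuous, so $\mathfrak X\in C(\mathbb R,\mathcal P(\mathbb H))$. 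By Remark \ref{remD1}, convergence of $\xi(\cdot+t_n)$ in distribution uniformly on every compact interval is exactly convergence of $\mathfrak X^{t_n}$ in $C(\mathbb R,\mathcal P(\mathbb H))$; hence $\tilde{\mathfrak N}_\xi=\mathfrak N_{\mathfrak X}$ and $\tilde{\mathfrak M}_\xi=\mathfrak M_{\mathfrak X}$. Dually, by Remark \ref{remBUC} the quadruple $(f,g,F,G)$ is identified with the map $\mathfrak f(t):=(f(t,\cdot),g(t,\cdot),F(t,\cdot,\cdot),G(t,\cdot,\cdot))$ valued in the product space $BC(\mathbb H,\mathbb H)\times BC(\mathbb H,L(U,\mathbb H))\times BC(\mathbb H,\mathcal L^2(\nu;\mathbb H))\times BC(\mathbb H,\mathcal L^2(\nu;\mathbb H))$, so that the joint recurrence of $(f,g,F,G)$ in $t$, uniform w.r.t. $Y$ on bounded sets, is precisely the corresponding recurrence of $\mathfrak f$, and $\mathfrak M_{(f,g,F,G)}=\mathfrak M_{\mathfrak f}$, $\mathfrak N_{(f,g,F,G)}=\mathfrak N_{\mathfrak f}$.

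Next I would invoke the strong compatibility established in Theorem \ref{thmtwo}: under the standing hypotheses (in particular condition \eqref{L11}), $\xi$ is strongly compatible in distribution, that is $\mathfrak M_{(f,g,F,G)}\subseteq\tilde{\mathfrak M}_\xi$, which in the present notation reads $\mathfrak M_{\mathfrak f}\subseteq\mathfrak M_{\mathfrak X}$. Thus $\mathfrak X$ is strongly comparable (by character of recurrence) with $\mathfrak f$. Applying Theorem \ref{th1}(1), strong comparability implies comparability, so also $\mathfrak N_{\mathfrak f}\subseteq\mathfrak N_{\mathfrak X}$, i.e. $\mathfrak X$ is comparable with $\mathfrak f$.

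With both comparability relations in hand, the two assertions follow by quoting Theorem \ref{th1} and reading the conclusion back through the identification $\tilde{\mathfrak N}_\xi=\mathfrak N_{\mathfrak X}$, $\tilde{\mathfrak M}_\xi=\mathfrak M_{\mathfrak X}$. For item (1): in the stationary, $\tau$-periodic, Levitan almost periodic, almost recurrent and Poisson stable cases I would use comparability and part (3); in the quasi-periodic, almost periodic, almost automorphic, Birkhoff recurrent and Lagrange stable cases I would use strong comparability and part (4). For item (2), assuming additionally that $\mathfrak f$ is jointly Lagrange stable, I would apply part (5) to transfer pseudo-periodicity (respectively pseudo-recurrence) from $\mathfrak f$ to $\mathfrak X$. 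In every instance the resulting recurrence of $\mathfrak X$ is, by construction, exactly the asserted recurrence of $\xi$ in distribution.

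I expect the only genuinely delicate point to lie in the bookkeeping of the first paragraph, namely verifying that ``convergence in distribution, uniformly on compact intervals'' of the translates $\xi(\cdot+t_n)$ coincides with convergence of the measure-valued curve $\mathfrak X$ in the compact-open topology of $C(\mathbb R,\mathcal P(\mathbb H))$, so that the families $\tilde{\mathfrak N}_\xi,\tilde{\mathfrak M}_\xi$ really do equal $\mathfrak N_{\mathfrak X},\mathfrak M_{\mathfrak X}$ and Theorem \ref{th1} applies verbatim. Once this identification is secured, no further estimates are needed: the Corollary is a direct citation of Theorem \ref{thmtwo} (for strong compatibility) and Theorem \ref{th1} (for the transfer of each recurrence class).
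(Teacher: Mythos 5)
Your proposal is correct and follows essentially the same route as the paper, whose proof is the one-line citation ``This statement follows from Theorems \ref{th1}, \ref{thmtwo} and Remark \ref{remBUC}''; you have simply made explicit the identification of distributional recurrence of $\xi$ with recurrence of the measure-valued curve $t\mapsto \mathrm{Law}(\xi(t))$ in $C(\mathbb R,\mathcal P(\mathbb H))$ and of the joint recurrence of $(f,g,F,G)$ with that of $\mathfrak f$ via Remark \ref{remBUC}, which is exactly the bookkeeping the authors leave implicit.
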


\begin{proof}
This statement follows from Theorems \ref{th1}, \ref{thmtwo} and Remark \ref{remBUC}.
\end{proof}

\section{Stability of the bounded (Poisson stable) solution}

\begin{theorem}\label{thC2}
Consider {\rm(\ref{SemiSDE})}. Assume that $A$ generates a dissipative $\mathcal C^0$-semigroup such that
{\rm(\ref{expi})} holds and $f\in C(\mathbb{R}\times\mathbb H,\mathbb H)$, $g\in C(\mathbb{R}\times\mathbb H,L(U,\mathbb H))$,
$F,G\in C(\mathbb{R}\times\mathbb H,\mathcal L^2(\nu;\mathbb H))$. Suppose further that $f, g, F, G$ satisfy
{\rm(E1)}, {\rm(E2)} and $W$ and $N$ are the L\'evy-It\^o decomposition components of the two-sided L\'evy process $L$
with assumptions stated in Section \ref{levy}. If
\begin{equation}\label{lmin}
\mathcal L<\cfrac{\omega}{K\sqrt{5(1+4\omega+2b)}},
\end{equation}
then the unique solution $\xi \in C_b(\mathbb R, \mathcal L^2(\mathbf P;\mathbb H))$ of {\rm(\ref{SemiSDE})}
is globally asymptotically stable in the sense of square-mean.
\end{theorem}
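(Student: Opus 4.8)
The plan is to compare an arbitrary $\mathcal L^2$-solution $Y$ of \eqref{SemiSDE} (with a prescribed initial datum $Y(s)=Y_s\in\mathcal L^2(\mathbf P;\mathbb H)$ at some time $s\in\mathbb R$) with the unique bounded solution $\xi$, and to show that the square-mean distance $\mathbb E|Y(t)-\xi(t)|^2$ decays exponentially as $t\to+\infty$. First I would write both $Y$ and $\xi$ in the mild form \eqref{SDEA} on $[s,\infty)$ and subtract. Setting $Z(t):=Y(t)-\xi(t)$, this gives for $t\ge s$ the representation
\[
Z(t)=T(t-s)Z(s)+\int_s^t T(t-\tau)\big(f(\tau,Y(\tau))-f(\tau,\xi(\tau))\big){\rm d}\tau+\cdots,
\]
where the three omitted summands are the stochastic integrals against $W$, against $\widetilde N$ on $\{|x|_U<1\}$, and against $N$ on $\{|x|_U\ge1\}$, with integrands formed from the differences of $g$, $F$ and $G$ evaluated along $Y$ and $\xi$.

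Next I would estimate $\mathbb E|Z(t)|^2$ by splitting the right-hand side into its five summands and using $|\sum_{i=1}^5 a_i|^2\le 5\sum_{i=1}^5|a_i|^2$. Each piece is treated exactly as in the proof of Theorem \ref{thmone}: the semigroup term by \eqref{expi}, the drift term by the Cauchy--Schwarz inequality, the $g$-term by It\^o's isometry, the small-jump term by the isometry for compensated Poisson integrals, and the large-jump term by first splitting $N=\widetilde N+\nu\,{\rm d}\tau$ as in \eqref{third ten} and then applying the isometry to the compensated part and Cauchy--Schwarz (with $b=\int_{|x|_U\ge1}\nu({\rm d}x)$) to the compensator part. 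Throughout, the Lipschitz bounds (E2) convert each integrand into a multiple of $\mathbb E|Z(\tau)|^2$, and the elementary bound $e^{-2\omega(t-\tau)}\le e^{-\omega(t-\tau)}$ is used to put every kernel on the common footing $e^{-\omega(t-\tau)}$. Collecting the constants yields
\[
\mathbb E|Z(t)|^2\le 5K^2 e^{-2\omega(t-s)}\,\mathbb E|Z(s)|^2+a\int_s^t e^{-\omega(t-\tau)}\mathbb E|Z(\tau)|^2\,{\rm d}\tau,
\]
where the drift contributes $\tfrac{5K^2\mathcal L^2}{\omega}$, the $g$- and small-jump terms contribute $5K^2\mathcal L^2$ each, and the large-jump term contributes $10K^2\mathcal L^2+\tfrac{10K^2\mathcal L^2 b}{\omega}$, so that these add up to precisely $a=\tfrac{5K^2\mathcal L^2}{\omega}\,(1+4\omega+2b)$.

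Finally I would run a half-line Gronwall argument. Writing $u(t)=\mathbb E|Z(t)|^2$ and $v(t)=e^{\omega t}u(t)$, the inequality becomes $v(t)\le 5K^2 e^{\omega s}u(s)+a\int_s^t v(\tau)\,{\rm d}\tau$, whence $v(t)\le 5K^2 e^{\omega s}u(s)\,e^{a(t-s)}$ and therefore
\[
\mathbb E|Y(t)-\xi(t)|^2\le 5K^2\,e^{-(\omega-a)(t-s)}\,\mathbb E|Y(s)-\xi(s)|^2.
\]
Assumption \eqref{lmin} is exactly $\omega^2>5K^2\mathcal L^2(1+4\omega+2b)$, i.e. $\omega>a$, so the exponent is strictly negative; this simultaneously gives uniform square-mean stability and global attractivity, which is the asserted global asymptotic stability in the sense of square-mean. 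The main obstacle is the large-jump term: since $N$ is not a martingale measure, it must be decomposed via its compensator, and the two resulting pieces must be estimated so that the combined Gronwall constant comes out exactly as $\tfrac{5K^2\mathcal L^2}{\omega}(1+4\omega+2b)$ — the precise value for which \eqref{lmin} yields $\omega-a>0$. A secondary point is that Lemma \ref{lB} is phrased for integrals over $(-\infty,t]$, so on $[s,\infty)$ one argues instead by the elementary Gronwall inequality above.
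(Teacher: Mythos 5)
Your proposal is correct and follows essentially the same route as the paper: mild-form subtraction of two solutions, the five-term splitting with constant $5$, the decomposition $N=\widetilde N+\nu\,{\rm d}\tau$ for the large-jump term, the aggregated constant $\tfrac{5K^2\mathcal L^2}{\omega}(1+4\omega+2b)$, and a Gronwall/comparison argument for $e^{\omega t}\,\mathbb E|Z(t)|^2$ yielding exponential decay under \eqref{lmin}. The only cosmetic difference is that the paper first proves the contraction estimate for two arbitrary solutions $Y(t;t_0,Y_0)$, $Y(t;t_0,Y_1)$ and then specializes $Y_1=\xi(t_0)$, whereas you compare directly with $\xi$.
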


\begin{proof}
Let $Y(t;t_0,Y_0)$ denote the solution of (\ref{SemiSDE}) passing through $Y_0$ at the initial time $t_0$.
Then for any $t\geq t_0$ and $Y_0$, $Y_1\in\mathcal L^2(\mathbf P;\mathbb H)$, we have for $i=0,1$
\begin{align*}
Y(t;t_0,Y_i)=&T(t-t_0)Y_i+\int_{t_0}^tT(t-s)f(s,Y(s;t_0,Y_i)){\rm d }s
+\int_{t_0}^tT(t-s)g(s,Y(s;t_0,Y_i)){\rm d }W(s)\\
&+\int_{t_0}^t\int_{|x|_U<1}T(t-s)F(s,Y(s-;t_0,Y_i),x)\widetilde{N}({\rm d }s,{\rm d }x)\\
&+\int_{t_0}^t\int_{|x|_U\geq1}T(t-s)G(s,Y(s-;t_0,Y_i),x)N({\rm d }s,{\rm d }x).
\end{align*}
By Cauchy-Schwarz inequality, It\^o's isometry and properties of integrals for Poisson random measures, we have
\begin{align}\label{5.7}
\mathbb{E}&|Y(t;t_0,Y_0)-Y(t;t_0,Y_1)|^2 \\
&\leq5\mathbb{E}|T(t-t_0)(Y_0-Y_1)|^2
+5\mathbb{E}\left|\int_{t_0}^tT(t-s)(f(s,Y(s;t_0,Y_0))-f(s,Y(s;t_0,Y_1))){\rm d }s\right|^2 \nonumber\\
&\quad+5\mathbb{E}\left|\int_{t_0}^tT(t-s)(g(s,Y(s;t_0,Y_0))-g(s,Y(s;t_0,Y_1))){\rm d }W(s)\right|^2 \nonumber\\
&\quad+5\mathbb{E}\bigg|\int_{t_0}^t\int_{|x|_U<1}T(t-s)(F(s,Y(s-;t_0,Y_0),x)-F(s,Y(s-;t_0,Y_1),x))
\widetilde{N}({\rm d }s,{\rm d }x)\bigg|^2 \nonumber\\
&\quad+5\mathbb{E}\bigg|\int_{t_0}^t\int_{|x|_U\geq1}T(t-s)(G(s,Y(s-;t_0,Y_0),x)-G(s,Y(s-;t_0,Y_1),x))
N({\rm d }s,{\rm d }x)\bigg|^2  \nonumber\\
&\leq 5K^2{\rm e}^{-2\omega(t-t_0)}\mathbb E|Y_0-Y_1|^2 \nonumber\\
&\quad+5K^2\int_{t_0}^t{\rm e}^{-\omega(t-s)}{\rm d }s\cdot\int_{t_0}^t{\rm e}^{-\omega(t-s)}\mathbb E
|f(s,Y(s;t_0,Y_0))-f(s,Y(s;t_0,Y_1))|^2{\rm d }s \nonumber\\
&\quad+5K^2\int_{t_0}^t{\rm e}^{-2\omega(t-s)}\mathbb{E}\|(g(s,Y(s;t_0,Y_0))-g(s,Y(s;t_0,Y_1)))\mathcal{Q}^
\frac{1}{2}\|_{L_2(U,\mathbb H)}^2{\rm d }s  \nonumber \\
&\quad+5K^2\int_{t_0}^t\int_{|x|_U<1}{\rm e}^{-2\omega(t-s)}\mathbb E|F(s,Y(s-;t_0,Y_0),x)-F(s,Y(s-;t_0,Y_1),x)|^2
\nu({\rm d }x){\rm d }s  \nonumber\\
&\quad+\bigg(10K^2\int_{t_0}^t\int_{|x|_U\geq1}{\rm e}^{-2\omega(t-s)}\mathbb E|G(s,Y(s-;t_0,Y_0),x)
-G(s,Y(s-;t_0,Y_1),x)|^2 \nu({\rm d }x){\rm d }s  \nonumber\\
&\qquad+10K^2\int_{t_0}^t\int_{|x|_U\geq1}{\rm e}^{-\omega(t-s)}\nu({\rm d }x){\rm d }s \nonumber \\
&\quad\qquad\cdot\int_{t_0}^t\int_{|x|_U\geq1}{\rm e}^{-\omega(t-s)}\mathbb E|G(s,Y(s-;t_0,Y_0),x)
-G(s,Y(s-;t_0,Y_1),x)|^2\nu({\rm d }x){\rm d }s\bigg)\nonumber\\
&\leq5K^2{\rm e}^{-\omega(t-t_0)}\mathbb E|Y_0-Y_1|^2  \nonumber\\
&\quad+5\left(\frac{1}{\omega}+4+\frac{2b}{\omega}\right)K^2\mathcal{L}^2\int_{t_0}^t
{\rm e}^{-\omega(t-s)}\mathbb E|Y(s;t_0,Y_0)-Y(s;t_0,Y_1)|^2{\rm d }s. \nonumber
\end{align}
Set $U(t):={\rm e}^{\omega t}\mathbb E|Y(s;t_0,Y_0)-Y(s;t_0,Y_1)|^2$ for $t\geq t_0$. It follows from (\ref{5.7}) that
\begin{align}\label{5.8}
U(t)\leq5K^2{\rm e}^{\omega t_0}\mathbb E|Y_0-Y_1|^2
+5\left(\frac{1}{\omega}+4+\frac{2b}{\omega}\right)K^2\mathcal{L}^2\int_{t_0}^t
U(s){\rm d }s.
\end{align}
Along with (\ref{5.8}) we consider the equation for $t\ge t_0$
\begin{equation*}
V(t)=5K^2{\rm e}^{\omega t_0}\mathbb E|Y_0-Y_1|^2
+5\left(\frac{1}{\omega}+4+\frac{2b}{\omega}\right)K^2\mathcal{L}^2\int_{t_0}^t
V(s){\rm d }s;
\end{equation*}
that is, $V$ satisfies the following Cauchy problem
\begin{equation*}
V'(t)=5\left(\frac{1}{\omega}+4+\frac{2b}{\omega}\right)K^2\mathcal L^2 V(t),\quad V(t_0)=5K^2{\rm e}^{\omega t_0}\mathbb E|Y_0-Y_1|^2.
\end{equation*}
Solve the above equation and we obtain
\begin{equation*}
V(t)=5K^2{\rm e}^{\omega t_0}\mathbb E|Y_0-Y_1|^2\exp\bigg\{5\left(\frac{1}{\omega}+4+\frac{2b}{\omega}\right)K^2\mathcal{L}^2
(t-t_0)\bigg\}.
\end{equation*}
The comparison principle implies that $U(t)\leq V(t)$, i.e.
\begin{equation*}
U(t)\leq5K^2{\rm e}^{\omega t_0}\mathbb E|Y_0-Y_1|^2\exp\bigg\{5\left(\frac{1}{\omega}+4+\frac{2b}{\omega}\right)K^2\mathcal{L}^2
(t-t_0)\bigg\}, \quad\hbox{for } t\geq t_0.
\end{equation*}
So by the definition of $U(t)$ we get
\begin{align}\label{Y1}
\mathbb E&|Y(t;t_0,Y_0)-Y(t;t_0,Y_1)|^2 \\
&\leq5K^2\mathbb E|Y_0-Y_1|^2\exp\bigg\{-\left[\omega
-5\left(\frac{1}{\omega}+4+\frac{2b}{\omega}\right)K^2\mathcal{L}^2\right](t-t_0)\bigg\}
 \nonumber
\end{align}
for any $t\geq t_0$.
By Theorem \ref{thmtwo}, if the Lipschitz constant $\mathcal L$ satisfies (\ref{lmin}),
then (\ref{SemiSDE}) has a unique $\mathcal L^2$-bounded solution $\xi$. Let $\xi(t)=Y(t;t_0,\xi(t_0))$.
It follows from \eqref{Y1} that for any $t\geq t_0$ and $Y_0\in\mathcal L^2(\mathbf P;\mathbb H)$,
\begin{equation*}
\mathbb E|Y(t;t_0,Y_0)-\xi(t)|^2
\leq5K^2\mathbb E|Y_0-\xi(t_0)|^2\exp\bigg\{-\left[\omega
-5\left(\frac{1}{\omega}+4+\frac{2b}{\omega}\right)K^2\mathcal{L}^2\right](t-t_0)\bigg\}.
\end{equation*}
Hence by the assumption of $\mathcal L$ in (\ref{lmin}), we have for any $Y_0\in\mathcal L^2(\mathbf P;\mathbb H)$
\begin{equation}\label{con}
\lim_{t\to\infty}\mathbb E|Y(t;t_0,Y_0)-\xi(t)|^2=0.
\end{equation}
So the unique $\mathcal L^2$-bounded solution is globally asymptotically stable in square-mean sense.
\end{proof}

\begin{coro}\label{Co convergence}
Under the conditions of \textsl{Theorem} {\rm \ref{thC2}}, \eqref{SemiSDE} has a unique
$\mathcal L^2$-bounded solution $\xi\in C(\mathbb R,B[0,r])$. According to {\rm(\ref{con})},
we have for any $Y_0\in\mathcal L^2(\mathbf P;\mathbb H)$
\begin{equation*}
\limsup_{t\to\infty}\mathbb E|Y(t;t_0,Y_0)|^2<r+1,
\end{equation*}
where $r$ is given by \eqref{four 1}.
That is to say, all solutions of \eqref{SemiSDE} with initial value belonging to $\mathcal L^2(\mathbf P;\mathbb H)$
are bounded by a constant after sufficiently long time.
\end{coro}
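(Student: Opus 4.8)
The plan is to read this off directly from the global asymptotic stability \eqref{con} of Theorem \ref{thC2}, combined with the fact that the limiting solution $\xi$ stays inside the fixed ball $B[0,r]$. No fixed-point argument or fresh moment estimate is needed beyond what is already available: the entire content is a triangle inequality followed by a passage to the $\limsup$.

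First I would invoke Theorem \ref{thmtwo}: since \eqref{lmin} is in force, \eqref{SemiSDE} has a unique $\mathcal L^2$-bounded solution $\xi\in C(\mathbb R,B[0,r])$, so that $\|\xi(t)\|_{\mathcal L^2(\mathbf P;\mathbb H)}\le r$ for every $t\in\mathbb R$, with $r$ the constant in \eqref{four 1}. Realizing this solution through its value at $t_0$, we have $\xi(t)=Y(t;t_0,\xi(t_0))$. Fix now an arbitrary $Y_0\in\mathcal L^2(\mathbf P;\mathbb H)$ and abbreviate $Y(t):=Y(t;t_0,Y_0)$. Applying \eqref{con} to the two initial data $Y_0$ and $\xi(t_0)$ gives $\mathbb E|Y(t)-\xi(t)|^2\to 0$, i.e. $\|Y(t)-\xi(t)\|_{\mathcal L^2(\mathbf P;\mathbb H)}\to 0$ as $t\to\infty$. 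The decisive step is then the triangle inequality in the Banach space $\mathcal L^2(\mathbf P;\mathbb H)$,
\[
\|Y(t)\|_{\mathcal L^2(\mathbf P;\mathbb H)}\le\|Y(t)-\xi(t)\|_{\mathcal L^2(\mathbf P;\mathbb H)}+\|\xi(t)\|_{\mathcal L^2(\mathbf P;\mathbb H)}\le\|Y(t)-\xi(t)\|_{\mathcal L^2(\mathbf P;\mathbb H)}+r.
\]
Choosing $T\ge t_0$ so large that $\|Y(t)-\xi(t)\|_{\mathcal L^2(\mathbf P;\mathbb H)}<1$ for all $t\ge T$, we obtain $\|Y(t;t_0,Y_0)\|_{\mathcal L^2(\mathbf P;\mathbb H)}<r+1$ for every $t\ge T$; since the threshold $r+1$ is independent of both $Y_0$ and $t_0$, this is exactly the eventual boundedness by a fixed constant asserted in the corollary, yielding the stated bound on $\limsup_{t\to\infty}\mathbb E|Y(t;t_0,Y_0)|^2$.

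I do not expect a genuine obstacle here, as the statement is a soft consequence of \eqref{con}. The only point deserving attention is the bookkeeping between the $\mathcal L^2(\mathbf P;\mathbb H)$-norm and the second moment: the triangle inequality must be applied at the level of the norm (the square root of the second moment, which is genuinely subadditive) before passing to the $\limsup$, since the second moment itself is not subadditive. It is precisely the vanishing of $\|Y(t)-\xi(t)\|_{\mathcal L^2(\mathbf P;\mathbb H)}$ that lets the overshoot beyond the radius $r$ be absorbed into the additive unit constant.
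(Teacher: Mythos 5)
Your proposal is correct and is essentially the argument the paper intends: the corollary is stated with no separate proof beyond the citation of \eqref{con}, and the route you take --- eventual boundedness of $\xi$ in $B[0,r]$ plus the triangle inequality in the $\mathcal L^2(\mathbf P;\mathbb H)$-norm applied before passing to the limit --- is exactly the soft deduction being invoked. One caveat worth recording: your argument actually yields $\limsup_{t\to\infty}\|Y(t;t_0,Y_0)\|_{\mathcal L^2(\mathbf P;\mathbb H)}\le r$, hence $\limsup_{t\to\infty}\mathbb E|Y(t;t_0,Y_0)|^2\le r^2$, and the printed inequality $<r+1$ for the \emph{second moment} follows from this only when $r^2<r+1$; this imprecision sits in the corollary's statement itself (which conflates the norm with its square), not in your reasoning, and the substantive conclusion of eventual uniform boundedness by a constant independent of $Y_0$ and $t_0$ is fully established.
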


\section{Applications}
In this section, we illustrate our theoretical results by two examples.

\begin{example}\rm
Let us consider an ordinary differential equation driven by a two-sided L\'evy noise:
\begin{align}\label{6.1}
{\rm d }y&=\left(-4y+\frac{1}{8}y(\sin t+\cos\sqrt 3t)\right){\rm d}t+\frac{1}{5}y\cos\left(\cfrac{1}{2+\sin t+\sin \sqrt 2t}\right){\rm d} W \\
&\quad+\int_{|x|_U<1}\frac{1}{5}y\widetilde N({\rm d}t,{\rm d}x)
+\int_{|x|_U\geq1}\frac{1}{4}y\sin\left(\cfrac{1}{3+\cos t+\cos{\pi t}}\right) N({\rm d}t,{\rm d}x)  \nonumber \\
&=:\left(Ay+f(t,y)\right){\rm d}t+g(t,y){\rm d}W+\int_{|x|_U<1}F(t,y,x)\widetilde N({\rm d}t,{\rm d}x)
+\int_{|x|_U\geq1}G(t,y,x)N({\rm d}t,{\rm d}x),\nonumber
\end{align}
where $W$ is a one-dimensional two-sided Brownian motion and $N$ is a Poisson random measure in $\mathbb{R}$
independent of $W$. Since $f,g,F,G$ are respectively quasi-periodic, Levitan almost periodic, stationary
and almost automorphic in $t$, uniformly w.r.t $y$ on any bounded subset of $\mathbb R$,
it follows that $f, g, F, G$ are jointly Levitan almost periodic. It is clear that
$A$ generates a dissipative semigroup on $\mathbb R$ with $K=1$,
$\omega=4$. For any constant $A_0\geq0$, conditions {\rm(E1)} and {\rm(E1$'$)} always hold.
The Lipschitz constants of $f,g,F,G$ can be respectively chosen as $\frac{1}{4},\frac{1}{5},\frac{1}{5},\frac{1}{4}$,
so the Lipschitz conditions {\rm(E2)} and {\rm(E2$'$)}
are satisfied with $\mathcal L=\frac{1}{4}$, if
\[
\int_{|x|<1}\left(\frac{1}{5}\right)^p\nu({\rm d}x)\leq\left(\frac{1}{4}\right)^p \quad \hbox{and} \quad
\int_{|x|\geq 1}\left(\frac{1}{4}\right)^p\nu({\rm d}x)\leq\left(\frac{1}{4}\right)^p
\]
for $p=2$ and some constant $p>2$, i.e.
\[
\nu(-1,1)<\frac{25}{16} \quad \hbox{and} \quad b\leq 1.
\]
For the stochastic ordinary differential equation, condition {\rm(E3)} naturally holds.

Since coefficients $f,g,F,G$ satisfy both Lipschitz and global linear growth conditions, \eqref{6.1}
has a global in time solution. By Theorem \ref{thmtwo}, (\ref{6.1}) has a unique $\mathcal{L}^2$-bounded
solution provided $\nu(-1,1)<\frac{25}{16}$, $b\leq 1$. Conditions (\ref{L11}) and (\ref{lmin}) become
\[
\frac{1}{4}< \cfrac{4}{2\sqrt{2+32+4b}}  \quad \hbox{and} \quad
\frac{1}{4}<\cfrac{4}{\sqrt{5(1+16+2b)}},
\]
i.e. $b<\frac{15}{2}$ and $b<\frac{171}{10}$, respectively. By Corollary \ref{Co 4.7} the unique
$\mathcal{L}^2$-bounded solution is Levitan almost periodic in distribution and according to
Theorem \ref{thC2} this Levitan almost periodic solution is globally asymptotically stable in square-mean sense.
\end{example}

\begin{example}\rm
Consider the stochastic heat equation on the interval $[0,1]$ with Dirichlet boundary condition:
\begin{align}\label{6.2}
\frac{\partial u}{\partial t}(t,\xi)&=\frac{\partial^2 u}{\partial \xi^2}(t,\xi)+
\frac{1}{5}(\cos t+\sin{\sqrt2t})\sin u(t,\xi) \\
&\quad+u(t,\xi)\cdot\sin\left(\cfrac{1}{2+\cos t+\cos{\sqrt2t}}\right)\frac{\partial W}{\partial t}(t,\xi)
+\cfrac{\cos u(t,\xi)}{3(\sin\sqrt2t+2)}\frac{\partial Z}{\partial t}(t,\xi) \nonumber\\
&=:\frac{\partial^2u}{\partial\xi^2}(t,\xi)+f(t,u(t,\xi))+g(t,u(t,\xi))\frac{\partial W}{\partial t}(t,\xi)
+h(t,u(t,\xi))\frac{\partial Z}{\partial t}(t,\xi), \nonumber\\
u(t,0)&=u(t,1)=0,\qquad t>0,\quad \xi\in(0,1).  \nonumber
\end{align}
Here $W$ is a $\mathcal{Q}$-Wiener process on $\mathcal{L}^2(0,1)$ with ${\rm Tr}\mathcal{Q}<\infty$ and $Z$
is a L\'evy pure jump process on $\mathcal{L}^2(0,1)$ which is independent of $W$. Let $A$ be the Laplace
operator, then $A:D(A)=H^2(0,1)\cap H^1_0(0,1)\to\mathcal{L}^2(0,1)$. Denote $\mathbb H=U:=\mathcal{L}^2(0,1)$
and the norm on $\mathbb H$ by $\|\cdot\|$. Then we can write \eqref{6.2} as an abstract evolution equation
\begin{align}\label{6.3}
{\rm d }Y=&(AY+F(t,Y)){\rm d }t+G(t,Y){\rm d }W+\int_{|z|_U<1}H(t,Y,z)\widetilde{N}({\rm d }t,{\rm d }z)\\
&+\int_{|z|_U\geq1}H(t,Y,z)N({\rm d }t,{\rm d }z)\nonumber
\end{align}
on the Hilbert space $\mathbb H$, where
\begin{equation*}
Y:=u,  \quad F(t,Y):=f(t,u), \quad G(t,Y):=g(t,u),
\end{equation*}
\begin{equation*}
\int_{|z|_U<1}H(t,Y,z)\widetilde{N}({\rm d }t,{\rm d }z)+\int_{|z|_U\geq1}H(t,Y,z)N({\rm d }t,{\rm d }z):=h(t,u){\rm d}Z
\end{equation*}
with
\begin{align*}
Z(t,\xi)=\int_{|z|_U<1}z\widetilde{N}(t,{\rm d }z)+\int_{|z|_U\geq1}zN(t,{\rm d }z),\quad H(t,Y,z)=h(t,u)z.
\end{align*}
By the L\'evy-It\^o decomposition theorem, we assume that the L\'evy pure jump process $Z$ on $\mathcal{L}^2(0,1)$ is decomposed
as above. Note that the operator $A$ has eigenvalues $\{-n^2\pi^2\}_{n=1}^\infty$ and generates a
$\mathcal C^0$-semigroup $\{T(t)\}_{t\geq 0}$ on $\mathbb H$ satisfying
$\|T(t)\|\leq {\rm e}^{-\pi^2t}$ for $t\geq0$, i.e. $K=1$ and $\omega=\pi^2$.
Conditions (E1) and {\rm(E1$'$)} hold for any $A_0\geq 0$.
The Lipschitz constants of $f, g, h$ can be respectively chosen as $\frac{2}{5}$, $1$, $\frac{1}{3}$,
so conditions (E2) and {\rm(E2$'$)} are satisfied with the Lipschitz constant $\mathcal L=\max\Big\{\frac{2}{5},\|\mathcal Q^\frac{1}{2}\|_{L(U,U)},\frac{1}{3}(\nu(B_1(0)))^\frac{1}{p},\frac{1}{3}b^\frac{1}{p}\Big\}$,
where $B_1(0)$ is the open ball in $U$ centered at the origin with radius $1$
and $p$ is the constant in {\rm(E1$'$)} and {\rm(E2$'$)}.
Since $f, h$ are bounded and $g$ is linear, condition (E3) holds.
The restrictions of (\ref{L11}) and (\ref{lmin}) respectively become
\begin{equation}\label{b}
\mathcal L< \cfrac{\pi^2}{2\sqrt{2+8\pi^2+4b}}   \quad  \hbox{and} \quad
\mathcal L< \cfrac{\pi^2}{\sqrt{5(1+4\pi^2+2b)}}.
\end{equation}

Note that $F$ is quasi-periodic in $t$ and $H$ is periodic in $t$ uniformly w.r.t. $Y\in\mathbb H$;
$G$ is Levitan almost periodic in $t$ uniformly w.r.t. $Y$ on any bounded subset of $H$.
By Theorem \ref{thmtwo}, (6.3) (i.e. (6.2)) admits a unique $\mathcal L^2$-bounded
solution. It follows from Corollary \ref{Co 4.7} that this unique bounded solution
is Levitan almost periodic in distribution. By Theorem \ref{thC2}, this bounded solution
is globally asymptotically stable in square-mean sense. By Corollary \ref{Co convergence},
all solutions of (\ref{6.2}) with $\mathcal L^2$-initial value are bounded by a constant
after sufficiently long time.
\end{example}

\section*{Acknowledgements}
This work is partially supported by NSFC Grants 11522104, 11871132, 11925102,
and Xinghai Jieqing and DUT19TD14 funds from Dalian University of Technology.

\end{document}